\newtheorem{theorem}{Theorem}[section]
\theoremstyle{plain}
\newtheorem{corollary}[theorem]{Corollary}
\newtheorem{lemma}[theorem]{Lemma}
\newtheorem{proposition}[theorem]{Proposition}
\theoremstyle{remark}
\numberwithin{equation}{section}
\newcommand{\vol}{\operatorname{vol}}
\newcommand{\ovol}{\operatorname{0-vol}}
\newcommand{\FP}{\operatornamewithlimits{FP}}
\newcommand{\re}{\operatorname{Re}}
\newcommand{\im}{\operatorname{Im}}
\newcommand{\res}{\operatorname{Res}}
\newcommand{\rank}{\operatorname{rank}}
\newcommand{\supp}{\operatorname{supp}}
\newcommand{\dist}{\operatorname{dist}}
\newcommand{\norm}[1]{\Vert #1 \Vert}
\newcommand{\brak}[1]{\langle #1 \rangle}
\newcommand{\bbR}{\mathbb{R}}
\newcommand{\bbH}{\mathbb{H}}
\newcommand{\bbC}{\mathbb{C}}
\newcommand{\bbZ}{\mathbb{Z}}
\newcommand{\bbN}{\mathbb{N}}
\newcommand{\bbB}{\mathbb{B}}
\newcommand{\calR}{\mathcal{R}}
\newcommand{\cinf}{C^\infty}
\newcommand{\del}{\partial}
\newcommand{\FPe}{\FP_{\varepsilon\to 0}}
\newcommand{\vep}{\varepsilon}
\newcommand{\tN}{\widetilde{N}}
\newcommand{\chr}{\mathbbm{1}}
\newcommand{\bF}{\mathbf{F}}
\newcommand{\Ai}{{\rm Ai}}
\newcommand{\diX}{\del_\infty X}
\newcommand{\diY}{\del_\infty Y}
\newcommand{\diC}{\del_\infty C}
\newcommand{\rf}{{\rm f}}
\newcommand{\rc}{{\rm c}}
\newcommand{\rff}{{\rm ff}}
\newcommand{\rfc}{{\rm fc}}
\newcommand{\rcf}{{\rm cf}}
\newcommand{\rcc}{{\rm cc}}
\newcommand{\nf}{{n_\rf}}
\newcommand{\nc}{{n_\rc}}
\begin{document}

\title[Sharp upper bounds]{Sharp geometric upper bounds on resonances for surfaces with hyperbolic ends}
\author[Borthwick]{David Borthwick}
\address{Department of Mathematics and Computer Science, Emory
University, Atlanta, Georgia, 30322, USA}
\thanks{Supported in part by NSF\ grant DMS-0901937.}
\email{davidb@mathcs.emory.edu}
\date{\today}
\subjclass[2000]{Primary 58J50, 35P25; Secondary 47A40}

\begin{abstract}
We establish a sharp geometric constant for the upper bound on the resonance counting 
function for surfaces with hyperbolic ends.  An arbitrary metric is allowed within some compact
core, and the ends may be of hyperbolic planar, funnel, or cusp type.  
The constant in the upper bound depends
only on the volume of the core and the length parameters associated to 
the funnel or hyperbolic planar ends.  Our estimate is sharp in that it
reproduces the exact asymptotic constant in the case of 
finite-area surfaces with hyperbolic cusp ends, and also in the case of funnel ends with Dirichlet boundary
condtiions.  
\end{abstract}

\maketitle
\tableofcontents

\bigbreak
\section{Introduction}\label{intro.sec}

For a compact Riemannian surface, the Weyl law shows that the asymptotic distribution
of eigenvalues is determined by global geometric quantities.  In the compact hyperbolic
case, Weyl asymptotics follow easily from the Selberg trace formula, see e.g.~\cite{McKean:1972}, 
and this approach extends also to non-compact hyperbolic surfaces of finite area \cite{Venkov}.
Some reinterpretation of the spectral counting is needed for the non-compact case;
one can either supplement the counting function for the discrete spectrum by a term related 
to the scattering phase, or else use the counting function for 
resonances instead of eigenvalues.  Weyl asymptotics, in this extended sense, 
were established for general finite-area surfaces with hyperbolic cusp ends by 
M\"uller \cite{Muller:1992} and Parnovski \cite{Parnovski:1995}. 

For infinite-area surfaces with hyperbolic ends, the discrete spectrum is finite and possibly
empty, and therefore plays no role in the spectral asymptotics.  
One could look for analogies to the finite-area results in the asymptotics of either the scattering phase
or the resonance counting function.  For the scattering phase of a surface with hyperbolic ends,
Weyl asymptotics were proven by Guillop\'e-Zworski \cite{GZ:1997}.
One does not necessarily expect a corresponding result to hold 
for the resonance counting function---see e.g.~\cite[Remark~1.6]{GZ:1997}, 
but neither can we rule out the possibility at this point.
The issue of how global geometric properties influence the distribution of resonances a 
remains a compelling problem.

At present, only the order of growth of the resonance counting function is well understood.
Guillop\'e-Zworski \cite{GZ:1995a, GZ:1997} showed
the the resonance counting function for infinite-area surfaces with hyperbolic ends 
satisfies $N_g(t) \asymp t^2$ (with the caveat that the lower bound is proportional to 
the $0$-volume which might be zero in exceptional cases).
These results have been extended to higher dimensional manifolds with hyperbolic
ends in Borthwick \cite{Borthwick:2008}.   Unfortunately, the methods used in these proofs
yield only an ineffective constant for the upper bound, with no clear geometric content.  
Moreover, the derivation of the lower bound depends explicitly on the upper bound,
so the geometric dependence of the lower bound was likewise unknown.

In this paper we present a geometric constant for the upper bound on the resonance counting function
for infinite-area surfaces with hyperbolic ends.  This constant is sharp in the sense that it 
agrees with the exact asymptotics in the cases of finite area surfaces or truncated funnnels.  
Our approach is inspired by Stefanov's recent
paper \cite{Stefanov:2006} on compactly supported perturbations of the Laplacian on $\bbR^n$ for $n$ odd,
and similar techniques were applied to compactly supported perturbations of $\bbH^{n+1}$ in Borthwick
\cite{Borthwick:2009}.  

We can state the cleanest result for a hyperbolic surface $(X,g) \cong \bbH^2/\Gamma$.   
Let $\calR_g$ denote the
associated resonance set (poles of the meromorphic continuation of $(\Delta_g - s(1-s))^{-1}$),
with counting function
$$
N_g(t) :=  \#\bigl\{\zeta \in \calR_g:\> |\zeta - \tfrac12| \le t\bigr\}.
$$
The sharp version of our bound involves a regularization of the counting function,
\begin{equation}\label{tN.def}
\tN_g(a) := \int_0^a \frac{2N_g(t)}{t^2}\>dt.
\end{equation}
This type of regularization is standard in the theory of zeros of entire functions, 
and there is a natural connection to the asymptotics of $N_g(t)$,
$$
\tN_g(a) \sim Ba^2 \quad\Longleftrightarrow\quad N_g(t) \sim Bt^2.
$$
(see \cite[Lemma~1]{Stefanov:2006}).  
If we work only with upper bounds, then we lose some sharpness in the
estimate,
$$
\tN_g(a) \le Ba^2 \quad\Longrightarrow\quad N_g(t) \le eBt^2.
$$

\begin{theorem}\label{hsurf.thm}
Suppose $(X,g)$ is a geometrically finite hyperbolic surface with $\chi(X) < 0$.  Let 
$\ell_1,\dots, \ell_\nf$ denote the diameters of the geodesic boundaries of the funnels of $X$.   
The regularized counting function for the resonances of $\Delta_g$ satisfies
\begin{equation}\label{Ng.bnd}
\frac{\tN_{g}(a)}{a^2} \le |\chi(X)| +  \sum_{j=1}^\nf \frac{\ell_j}{4}  + o(1).
\end{equation}
\end{theorem}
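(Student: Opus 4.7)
The plan is to realize the resonances as the zero set of an entire function of order $2$ and to convert a sharp upper bound on the growth of that function into the claimed bound on $\tN_g(a)$ via Jensen's formula, following the template of Stefanov \cite{Stefanov:2006} and \cite{Borthwick:2009}. The natural candidate in the hyperbolic setting is either the Selberg zeta function $Z_X(s)$ or, equivalently for counting purposes, a relative scattering/zeta determinant $D(s)$ built by comparing $\Delta_g - s(1-s)$ on $X$ to the corresponding operator on a model manifold $X_0$ formed by the disjoint union of the cusps and funnels of $X$. Either way one obtains an entire function of order $2$ whose divisor is (up to known trivial zeros) the resonance set, so Jensen's formula on circles $|s - \tfrac{1}{2}| = a$ will turn an upper bound of the form
\[
\log |D(s)| \le \pi \Bigl(|\chi(X)| + \tfrac{1}{4}\sum_{j=1}^{\nf} \ell_j\Bigr)|s|^2 + o(|s|^2)
\]
into \eqref{Ng.bnd}.

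The heart of the argument is then establishing this sharp growth bound for $D(s)$. I would build a parametrix for $(\Delta_g - s(1-s))^{-1}$ by gluing explicit model resolvents on the ends to a parametrix on the compact core $K$, and then read off the leading constant piece by piece. On each funnel $F_j \cong (0,\infty)_r \times (\bbR/\ell_j\bbZ)_\theta$ with metric $dr^2 + \cosh^2 r\, d\theta^2$, Fourier decomposition in $\theta$ reduces the resolvent to a family of one-dimensional problems whose solutions are explicit hypergeometric functions. A direct trace-norm computation --- or a comparison with a truncated funnel carrying a Dirichlet condition at some radius $R$, followed by $R \to \infty$ --- should produce the contribution $\pi \ell_j/4$ to the growth constant. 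Cusps, by contrast, contribute nothing at leading order, since the cusp scattering matrix is a rational combination of $\Gamma$-functions with only $O(|s|\log|s|)$ logarithmic growth; this is consistent with the absence of a cusp length parameter in \eqref{Ng.bnd}. The core contribution is governed by Weyl asymptotics for a Dirichlet problem on a bounded region containing $K$; after accounting for the $s \leftrightarrow 1-s$ symmetry of the resonance set and invoking Gauss--Bonnet, its leading coefficient comes out to exactly $\pi |\chi(X)|$.

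The main obstacle is keeping the constant sharp. The Guillop\'e--Zworski approach \cite{GZ:1995a, GZ:1997} based on a $\bbH^2$-parametrix delivers the correct order of growth, but loses the precise coefficient inside the trace-class estimates of the error terms. The Stefanov refinement instead substitutes a finite Dirichlet model for each funnel end, analyses the resulting bounded problem explicitly, and passes to the limit only at the end of the argument. Implementing this in the mixed funnel/cusp setting --- in particular, controlling the cross-terms arising from the gluing along collar regions so that they do not pollute the leading coefficient, and handling cusp pieces whose scattering theory is qualitatively different from the funnel case --- is the delicate part. Once the sharp $\log |D|$ bound is in place, Jensen's formula applied to $D$ on the disc of radius $a$ around $s = \tfrac{1}{2}$ concludes the proof.
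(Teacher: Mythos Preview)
Your high-level strategy---sharp growth bound plus a Jensen-type argument, in the style of Stefanov---is the right instinct, but the mechanism you describe diverges from the paper's in a way that matters for sharpness, and one step contains a genuine gap.

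The paper does \emph{not} work with an entire function whose zeros are the resonances, nor does it apply Jensen's formula over full circles. Instead it uses the \emph{relative scattering determinant} $\tau(s) = \det S_P(s)S_0(s)^{-1}$, which is meromorphic and satisfies $\tau(1-s) = \tau(s)^{-1}$. Contour integration of $\tau'/\tau$ over a \emph{half}-circle yields the Froese-type formula (Corollary~\ref{relcount}):
\[
\tN_P(a) - \tN_0(a) = 4\int_0^a \frac{\sigma(t)}{t}\,dt + \frac{2}{\pi}\int_0^{\pi/2}\log|\tau(\tfrac12 + ae^{i\theta})|\,d\theta + O(\log a),
\]
separating the \emph{scattering phase} on the critical line from a growth integral. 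The core contribution $|\chi(X)|$ then arises from Guillop\'e--Zworski's exact Weyl asymptotic for the scattering phase, $\sigma(\xi) \sim \tfrac{1}{4\pi}\ovol(X,g)\,\xi^2$, combined with Gauss--Bonnet on the Nielsen region $X_\rc$---not from a Dirichlet eigenvalue problem on a bounded region, as you propose. The funnel contribution $\ell_j/4$ emerges from a combination of the background count $\tN_0$ and the sharp upper bound on $\log|\tau|$, the latter obtained by writing $S_P S_0^{-1} = I + Q$ with $Q$ factored through model Poisson operators on each end (Propositions~\ref{ssiq.prop} and~\ref{tau.gj.prop}) and then carrying out a detailed Airy-function asymptotic analysis of the funnel eigenmodes (Proposition~\ref{detG.prop}).

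The gap in your outline is this: if you try to build an entire $D(s)$ with divisor $\calR_g$ and prove $\log|D(s)| \le C|s|^2 + o(|s|^2)$ with the sharp $C$, you must control the exponential factor in the Hadamard product, and there is no evident way to do that without already knowing the resonance distribution. The paper sidesteps this circularity precisely by exploiting $|\tau|=1$ on the critical line, which externalizes the ``core'' part of the constant into the scattering phase---a quantity whose leading asymptotic is known \emph{exactly}, not merely bounded. Your Dirichlet-Weyl substitute for this step would produce a volume term for a truncated compact region, and it is not clear how you would recover exactly $|\chi(X)|$ from that without reintroducing the $0$-volume and the scattering-phase machinery.
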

We can see that this result is sharp in two extreme cases.  For a finite-area hyperbolic surface
(i.e.~$\nf=0$), our upper bound agrees with the known asymptotic $N_g(t)/t^2 \sim |\chi(X)|$.    
Moreover, for an isolated hyperbolic funnel $F_\ell$
of boundary length $\ell$, under Dirichlet boundary conditions, the resonances form a half-lattice.  
It's then easy to see that $N_{F_\ell}(t)/t^2 \sim \ell/4$, so
the funnel portion of (\ref{Ng.bnd}) is also sharp.

The restriction to $\chi(X)<0$ in Theorem~\ref{hsurf.thm} leaves out just a few cases.  The 
complete (smooth) hyperbolic surfaces for which $\chi(X) \ge 0$ are the hyperbolic plane $\bbH^2$, 
the hyperbolic cylinder $C_\ell := \bbH^2/\brak{z \mapsto e^{\ell} z}$, and the parabolic cylinder 
$C_\infty := \bbH^2/\brak{z \mapsto z+1}$.   Resonance sets can be computed explicitly in these
cases, and exact asymptotics for the counting function are easily obtained:
$$
N_{\bbH^2}(t) \sim t^2, \qquad N_{C_\ell}(t) \sim \frac{\ell}2t^2, \qquad N_{C_\infty}(t) = 1.
$$
If we interpret $C_\ell$ as the union of 2 funnel ends, then (\ref{Ng.bnd}) would also give
a sharp estimate for this case.

Using Theorem~\ref{hsurf.thm} in conjunction with the Guillop\'e-Zworski argument \cite{GZ:1997}
for the lower bound, we can deduce the following:
\begin{corollary}\label{lb.cor}
For $k \in\bbN$ there exists a constant $c_k$ such that for any geometrically finite hyperbolic surface $(X,g)$ 
with $\chi(X) < 0$,
$$
\frac{N_g(t)}{t^2} \ge c_k \> |\chi(X)|\>  \Biggl( 1 + \frac{1}{|\chi(X)|} \sum_{j=1}^\nf \frac{\ell_j}{4} \Biggr)^{-\frac{2}{k}},
$$
for $t \ge 1$.
\end{corollary}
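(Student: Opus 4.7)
\emph{Proof proposal.}
The plan is to substitute the sharp upper bound of Theorem~\ref{hsurf.thm} into the lower bound framework of Guillop\'e-Zworski~\cite{GZ:1997}, which is engineered precisely to convert any effective estimate of the form $\tN_g(a) \le Ba^2$ into a matching lower bound whose constants depend explicitly on $B$ and on the Euler characteristic. The corollary is then a matter of tracking geometric constants through their argument.

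The starting point is an entire function $D(s)$ of order two---one may take the scattering determinant, or an equivalent Hadamard product formed over $\calR_g$---whose zero set, with multiplicity, agrees with the resonance set. Theorem~\ref{hsurf.thm}, via the standard passage from a counting function bound to an estimate on the genus-two Weierstrass product, supplies
$$
\log|D(s)| \le B\,|s|^2 + O\bigl(|s|\log|s|\bigr), \qquad B := |\chi(X)| + \sum_{j=1}^{\nf} \frac{\ell_j}{4}.
$$
Independently, the $0$-trace wave formula produces a lower bound on a mean of $\log|D|$ on large circles: the singularity at $t=0$ of the geometric side is governed purely by the Weyl-type term proportional to $|\chi(X)|$ and forces the spectral side to contribute at least $c\,|\chi(X)|\,r^2$ in mean at radius $r$. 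Crucially, the funnel lengths do not enter this lower bound, only the core contribution does.

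To extract a counting function lower bound from these two estimates I would apply a Cartan minimum modulus argument in combination with Jensen's formula on a pair of nested discs, following the scheme of~\cite[Section~4]{GZ:1997}. The key point is that Cartan's theorem involves a free parameter $\eta$ controlling the size of an exceptional set of discs outside which $|D|$ is bounded below; choosing $\eta$ proportional to $1/k$ produces the family of exponents, yielding
$$
N_g(t) \ge c_k\, |\chi(X)|^{1+2/k}\, B^{-2/k}\, t^2 \;=\; c_k\, |\chi(X)| \left(\frac{|\chi(X)|}{B}\right)^{2/k} t^2,
$$
which is precisely the statement once one substitutes $B/|\chi(X)| = 1 + |\chi(X)|^{-1}\sum_j \ell_j/4$.

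The main obstacle is ensuring the lower bound on $N_g(t)$ holds for \emph{every} $t \ge 1$ rather than along a sparse sequence, since Cartan's theorem only controls $|D|$ off an exceptional set of radii. This is exactly the role of the parameter $k$: taking $k$ large shrinks the exceptional set enough that any missed $t$ lies within a bounded ratio of a good radius, and the resulting constant factor is absorbed into $c_k$. The worsening factor $(|\chi(X)|/B)^{2/k} \to 1$ as $k \to \infty$ is the unavoidable price for this flexibility.
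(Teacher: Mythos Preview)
Your final formula and the identification of the two inputs---the upper bound constant $B = |\chi(X)| + \sum_j \ell_j/4$ from Theorem~\ref{hsurf.thm} and a lower bound governed by $|\chi(X)|$ alone via the wave trace---are correct. But the mechanism you describe is not the one the paper (or Guillop\'e--Zworski) uses, and the step where you say the $0$-trace wave formula ``produces a lower bound on a mean of $\log|D|$ on large circles'' is a genuine gap: the wave trace is a distributional identity summed over resonances, not an estimate on a determinant, and there is no evident way to convert it into a Jensen-type lower bound on $\log|D|$ without first doing exactly the test-function pairing below. The Cartan machinery is then superfluous.

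The paper's argument is short and direct. Choose $\phi \in C_0^\infty(\bbR_+)$ with $\phi \ge 0$, $\phi(1)>0$, so that $|\hat\phi(\xi)| \le C_k(1+|\xi|)^{-k-2}$ for $\im\xi \le 0$. Pairing the distributional Poisson formula with $\lambda\phi(\lambda\,\cdot)$ gives
\[
|\ovol(X,g)|\,\lambda^2 \le C_k \int_0^\infty (1+r)^{-k-3} N_g(\lambda r)\,dr.
\]
Split the integral at $r=a$: for $r\le a$ bound $N_g(\lambda r)\le N_g(\lambda a)$, and for $r>a$ insert the upper bound $N_g(\lambda r)\le A(\lambda r)^2$. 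This yields $|\ovol|\,\lambda^2 \le C_k[N_g(\lambda a) + A\lambda^2 a^{-k}]$; setting $t=\lambda a$ and optimizing over $a$ gives $N_g(t) \ge c_k\,|\ovol|^{1+2/k} A^{-2/k}\,t^2$, which is the stated bound after substituting $|\ovol|=2\pi|\chi(X)|$. Here $k$ enters through the Fourier decay of $\phi$, not through a Cartan exceptional set, and the resulting bound holds for \emph{every} $t$---there is no exceptional-radii issue to manage.
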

\noindent
The constant $c_k$ obtained in this way (see \S\ref{scdet.sec} for the derivation) 
is rather ineffective;  the point here is
just that we can find a lower bound that depends only on $\chi(X)$ and $\{\ell_j\}$.

We will obtain Theorem~\ref{hsurf.thm} as a consequence of a somewhat more general estimate.  
Consider a smooth Riemannian surface $(X,g)$, possibly with boundary, 
which has finitely many ends that are assumed to be of hyperbolic planar, funnel, or cusp type.
That is, $X$ admits the following decomposition, as illustrated in Figure~\ref{KFC},
\begin{equation}\label{X.KYC}
X = K \sqcup Y_1 \sqcup \dots \sqcup Y_{\nf} \sqcup C_{\nf+1} \sqcup \dots \sqcup C_{\nf+\nc},
\end{equation}
where the core $K$ is a compact manifold with boundary.
The metric in $K$ is arbitrary.
The $Y_j$'s are infinite-area ends: either hyperbolic planar,
\begin{equation}\label{hp.end}
Y_j \cong [b_j, \infty) \times S^1, \quad g|_{Y_j} = dr^2 + \sinh^2r\>  d\theta^2, \quad b_j \ge 0,
\end{equation}
or hyperbolic funnels,
\begin{equation}\label{f.end}
Y_j \cong [b_j, \infty) \times S^1, \quad g|_{Y_j} = dr^2 +  \ell_j^2 \cosh^2r \frac{d\theta^2}{(2\pi)^2}, \quad 
b_j, \ell_j \ge 0.
\end{equation}
The $C_j$'s are hyperbolic cusps,
\begin{equation}\label{c.end}
C_j \cong [b_j, \infty) \times S^1, \quad g|_{C_j} = dr^2 + e^{-2r}  \frac{d\theta^2}{(2\pi)^2}, \quad b_j \ge 0.
\end{equation}
The finite-area portion of $X$ consisting of the core plus the cusps is denoted by 
\begin{equation}\label{Xc.def}
X_\rc := K \sqcup C_{\nf+1} \sqcup \dots \sqcup C_{\nf+\nc}.
\end{equation}

\begin{figure} 
\psfrag{K}{$K$}
\psfrag{bK}{$\partial K$}
\psfrag{C1}{$C_1$}
\psfrag{F1}{$Y_1$}
\psfrag{F2}{$Y_2$}
\psfrag{Xc}{$X_\rc$}
\begin{center}  
\includegraphics{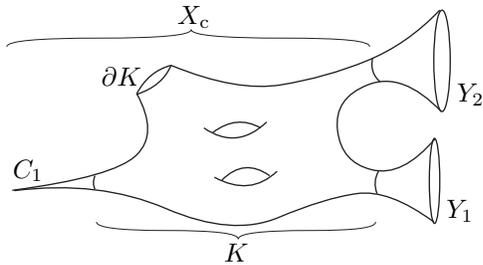} 
\end{center}
\caption{Surface $X$ with boundary and hyperbolic ends.}\label{KFC}
\end{figure}

Note that any geometrically finite hyperbolic surface, with the exception of the parabolic cylinder $C_\infty$, 
admits a decomposition of the form (\ref{X.KYC}).  And in such surfaces, aside from $\bbH^2$ itself, 
only funnel or cusp ends can occur.

We let $\Delta_g$ denote the positive Laplacian on $(X,g)$.  In general we may consider the operator
$$
P := \Delta_g + V,
$$
where $V\in \cinf_0(X)$ with $\supp(V) \subset K$.  
We denote by $\calR_P$ the resonance set associated to $P$.  These resonances
are the poles of the analytically continued resolvent 
$$
R_P(s) := (P - s(1-s))^{-1},
$$ 
counted according to multiplicity.  The associated resonance counting function is
$$
N_P(t) := \#\bigl\{\zeta \in \calR_P:\> |\zeta - \tfrac12| \le t\bigr\}.
$$
Our context is essentially that of Guillop\'e-Zworski \cite{GZ:1995a, GZ:1997}, and 
so we already know that $N_P(t) \asymp t^2$  (see \S\ref{res.sec} for details).  
It is thus natural to define the regularized counting function $\tN_P(a)$ just as in (\ref{tN.def}).

Before stating the upper bound, we introduce the asymptotic constants associated to the
resonance count for isolated hyperbolic planar or funnel ends. 
\begin{theorem}\label{Yj.asymp}
For a hyperbolic planar or funnel end $Y \cong [b,\infty)\times S^1$, 
with metric as in (\ref{hp.end}) or (\ref{f.end}), 
the resonance counting function for the Laplacian with Dirichlet boundary conditions at $r = b$
satisfies an asymptotic as $t \to \infty$,
$$
N_Y(t) \sim A(Y)t^2.
$$
\end{theorem}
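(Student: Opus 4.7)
The plan is to exploit the explicit separability of $Y$ along the $S^1$ factor. First I would Fourier decompose $L^2(Y) = \bigoplus_{k\in\bbZ} \calH_k$, so that the Laplacian splits as $\Delta_g = \bigoplus_{k\in\bbZ} P_k$, where $P_k$ acts on the half-line $[b,\infty)$ as a one-dimensional Sturm--Liouville operator. After the standard unitary conjugation by $\sqrt{f(r)}$, with $f(r) = \sinh r$ for the hyperbolic planar end and $f(r) = (\ell/2\pi)\cosh r$ for the funnel end, $P_k$ becomes a Schr\"odinger operator of the form $-\del_r^2 + \tfrac14 + V_k(r)$ on $L^2([b,\infty),dr)$ with Dirichlet boundary condition at $r=b$. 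The resonance set of $Y$ then decomposes as $\calR_Y = \bigsqcup_{k\in\bbZ} \calR_{P_k}$ (counted with multiplicity), and the problem reduces to counting zeros in each mode and summing over $k$.

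The second step is to solve the radial equation $P_k u = s(1-s) u$ explicitly. In both geometries an appropriate change of variable (such as $\tanh r$ or $\tanh^2 r$) reduces it to a hypergeometric or Legendre ODE, and the Jost solution decaying like $e^{-sr}$ at infinity is explicit in closed form. The Dirichlet condition at $r=b$ then yields a transcendental equation $F_k(s) = 0$, entire in $s$, whose zeros are exactly the resonances in mode $k$. Stirling asymptotics applied to the Gamma-function factors, combined with Olver-type uniform expansions of the hypergeometric/Legendre factor, show that for each $k$ the zeros form an approximate half-lattice with computable asymptotic density. Summing the per-mode counts over $|k| \lesssim t$ produces the expected $t^2$ growth, and the constant $A(Y)$ is read off from the lattice spacing together with the maximal $k$ whose resonances meet the disk $|s - \tfrac12| \le t$. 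For $|k| \gg t$ the centrifugal barrier $k^2/f(r)^2$ forces the resonances outside this disk, so those modes contribute negligibly.

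The main obstacle is making the count uniform in $k$, particularly in the transition regime $|k| \sim t$ where neither the bounded-mode nor the large-barrier asymptotics apply cleanly. Here I would invoke uniform asymptotic expansions of the hypergeometric functions with respect to the two parameters $(s,k)$, and use an argument-principle count of the zeros of $F_k$ inside $|s - \tfrac12| \le t$, with error bounds that are uniform in $k$. The delicate point is to sum these errors over all Fourier modes and verify that they contribute only $o(t^2)$; once this is in place the asymptotic $N_Y(t) \sim A(Y) t^2$ follows, with $A(Y)$ a specific function of $\ell$ in the funnel case and a universal constant in the hyperbolic planar case.
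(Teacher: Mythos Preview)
Your outline is viable and shares the essential ingredients with the paper's argument: Fourier decomposition along $S^1$, explicit hypergeometric solutions of the radial equation, and uniform Olver-type (Airy) asymptotics to handle the transition regime $|k|\sim t$.  The paper, however, organizes the count differently.  Rather than applying the argument principle directly to each per-mode Jost function $F_k(s)$ and summing, it treats $Y=F_{\ell,r_0}$ as a special instance of the general machinery already built: one forms the relative scattering determinant $\tau(s)=\det S_{F_{\ell,r_0}}(s)S_{F_\ell}(s)^{-1}$, invokes the counting formula of Corollary~\ref{relcount} to express $\tN_{F_{\ell,r_0}}-\tN_{F_\ell}$ as the scattering-phase integral plus a contour integral of $\log|\tau|$, evaluates the first term by the scattering-phase Weyl law (Theorem~\ref{scphase.thm}), and then estimates $\log|\tau(\tfrac12+ae^{i\theta})|$ from above and below by summing the per-mode ratios $[S_{F_{\ell,r_0}}(s)]_k/[S_{F_\ell}(s)]_k$ (Lemmas~\ref{SFlr.est} and~\ref{SFlr.mm}).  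The upper bound is already supplied by Theorem~\ref{tau.int.thm}; only the matching lower bound needs the detailed work of \S\ref{trfun.sec}.  Your direct zero-counting route would yield the same constant but would have to build two-sided uniform estimates from scratch, whereas the paper's route recycles the determinant framework developed for Theorem~\ref{main.thm}.

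Two small inaccuracies worth flagging.  First, the per-mode zeros are not well described as an ``approximate half-lattice'' once $r_0\ne 0$: for the truncated funnel they lie along the curve $\re\phi((\tfrac12-s)/k;r_0)=0$ together with a string on the negative real axis (see Figure~\ref{resphi7} and the discussion after Lemma~\ref{SFlr.est}), so your lattice picture would need to be replaced by this curve-based description.  Second, $A(Y)$ is not a universal constant in the hyperbolic planar case; it depends on the boundary location $b$ through the integral formula~(\ref{A.hp}).
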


We will write these constants $A(Y)$ explicitly in a moment.  But first let us state the main result
of this paper.

\begin{theorem}\label{main.thm}
For $(X,g)$ a surface with hyperbolic ends as in (\ref{X.KYC}),  and $V \in \cinf_0(X)$, 
the regularized counting function for  $P = \Delta_g + V$ satisfies
\begin{equation}\label{main.upper}
\frac{\tN_P(a)}{a^2} \le 
\frac{1}{2\pi} \vol \bigl(X_\rc, g\bigr) +  \sum_{j=1}^\nf A(Y_j)  + o(1),
\end{equation}
where $X_\rc$ is the subset (\ref{Xc.def}).
\end{theorem}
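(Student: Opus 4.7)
The plan is to follow the Stefanov--Borthwick strategy (cf.~\cite{Stefanov:2006, Borthwick:2009}) adapted to the geometry of hyperbolic ends: build an entire function whose zeros are (up to explicit model contributions) the resonances of $P$, estimate its indicator function by direction, and apply a Jensen-type argument to convert the growth bound into an upper bound on $\tN_P(a)$ with \emph{sharp} constants.

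\textbf{Step 1: Parametrix and relative determinant.} Using the decomposition (\ref{X.KYC}), I would construct a meromorphic parametrix $M(s)$ for $R_P(s)$ by pasting together, via smooth cutoffs, the Dirichlet resolvents on each end $Y_j$ and $C_j$ (past $r = b_j$) with a background resolvent on a suitable compactification of the core $K$. The error $K(s) := (P - s(1-s))M(s) - I$ is a compactly supported smoothing operator, and the Fredholm determinant
$$
D_{\rrel}(s) := \det(I + K(s))
$$
is an entire function of order at most $2$ whose zero set, counted with multiplicity, accounts for $\calR_P$ modulo the model contributions (the Dirichlet resonances of the ends, already described by the constants $A(Y_j)$ of Theorem~\ref{Yj.asymp}, together with finitely many spurious zeros coming from the background on $X_\rc$).

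\textbf{Step 2: Indicator function estimates.} The heart of the argument is to control the indicator
$$
h(\theta) := \limsup_{r\to\infty} \frac{\log\bigl|D_{\rrel}\bigl(\tfrac12 + re^{i\theta}\bigr)\bigr|}{r^2}.
$$
Using short-time heat-trace asymptotics on the finite-area piece $X_\rc$ together with the standard translation between heat trace and resolvent trace growth in a complex cone, one obtains an angular contribution proportional to $\vol(X_\rc, g)$. On each funnel or hyperbolic planar end, the explicit Dirichlet model resolvent and its scattering determinant yield an averaged indicator equal to $A(Y_j)$, consistent with Theorem~\ref{Yj.asymp}. Cusps contribute only through the finite-area Weyl term and carry no independent end-indicator.

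\textbf{Step 3: Jensen/Carleman conversion.} With $D_{\rrel}$ of order $2$ and indicator controlled as above, the classical machinery for entire functions (of which \cite[Lemma~1]{Stefanov:2006} is a convenient packaging for the regularized count) gives
$$
\tN_P(a) \le \frac{1}{2\pi}\int_0^{2\pi} h(\theta)\>d\theta \cdot a^2 + o(a^2).
$$
Substituting the indicator bounds from Step~2 and averaging produces exactly (\ref{main.upper}), with the factor $\frac{1}{2\pi}\vol(X_\rc)$ emerging as the angular average of the core's heat-trace contribution and $\sum_j A(Y_j)$ arising from the ends.

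\textbf{Main obstacle.} The difficult part, where the sharpness lives, is Step~2: extracting an indicator that is \emph{averaged}-optimal rather than merely pointwise-optimal. The Weyl-type trace bounds used in \cite{GZ:1995a,GZ:1997} are pointwise in $\theta$ and overcount in non-real directions, so they recover the correct order $t^2$ but lose the precise geometric constant. To avoid this one must argue essentially as in \cite[\S4]{Stefanov:2006}, tracking the principal symbol of the short-time heat kernel on the core while \emph{simultaneously} matching it against the exact model resolvent on each end near the gluing region, so that no piece of the trace is double-counted at the interface. Verifying this clean separation between the finite-area and infinite-area contributions, and checking that the remainder after subtracting the model pieces is genuinely of subquadratic growth in every direction, is where the bulk of the technical effort will go.
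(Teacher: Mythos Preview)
Your outline diverges from the paper's argument in a way that creates real gaps, not just stylistic differences.

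First, a technical point: the error $K(s)$ in your parametrix is \emph{not} entire in $s$, because the model resolvents you paste in carry their own poles (the sets $\calR_{F_{\ell_j}}$, $\calR_\bbH$, and cusp poles). So $D_\rrel(s)$ is meromorphic, not entire, and its divisor does not coincide with $\calR_P$ up to a finite correction; you would have to track the model pole sets carefully. More seriously, the crude parametrix determinant in the Guillop\'e--Zworski setup (here $\det(I-L_3(s)^3)$) is known to give the order-$2$ bound but \emph{not} a sharp constant, precisely because the cutoff commutators in $K(s)$ introduce artificial growth that cannot be removed by heat-trace considerations alone. Your Step~2 proposal to recover the constant $\tfrac{1}{2\pi}\vol(X_\rc)$ from short-time heat asymptotics on $X_\rc$ has no clear mechanism: $X_\rc$ is a manifold with boundary, the boundary conditions are artificial (they come from where you chose to cut), and there is no direct link between the heat trace on $X_\rc$ and the growth of $\log|D_\rrel(\tfrac12+re^{i\theta})|$.

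The paper takes a different route that sidesteps all of this. The central object is not a parametrix determinant but the \emph{relative scattering determinant} $\tau(s) = \det S_P(s)S_0(s)^{-1}$, whose Hadamard factorization (Proposition~\ref{tau.factor}) feeds into a contour-integral counting formula (Corollary~\ref{relcount}):
\[
\tN_P(a) - \tN_0(a) \;=\; 4\int_0^a \frac{\sigma(t)}{t}\,dt \;+\; \frac{2}{\pi}\int_0^{\pi/2}\log|\tau(\tfrac12+ae^{i\theta})|\,d\theta \;+\; O(\log a).
\]
The volume term then comes \emph{not} from any heat trace on the core, but from the already-known Weyl law for the scattering phase $\sigma$ (Theorem~\ref{scphase.thm}), which contributes $\tfrac{1}{2\pi}\ovol(X,g)\,a^2$; the decomposition $\ovol(X,g) = \vol(X_\rc,g) + \sum_j \ovol(Y_j,g)$ is then arithmetic. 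The remaining work is a sharp upper bound on the $\log|\tau|$ integral (Theorem~\ref{tau.int.thm}), and this is where the actual analysis lives: one writes $S_P S_0^{-1} = I + Q$ with $Q$ factored through cutoff Poisson operators on the model ends (Proposition~\ref{ssiq.prop}), uses that the interior resolvent is \emph{uniformly bounded} for $\re s \ge \tfrac12$ away from spectrum to reduce to $\det(I + c|G_j(s)|)$ for model operators $G_j$ (Proposition~\ref{tau.gj.prop}), and then obtains the constants $A(Y_j)$ by a turning-point (Airy-type) asymptotic analysis of the funnel eigenmodes in \S\ref{fmode.sec}--\ref{fdet.sec}. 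That last ingredient---the uniform Liouville--Green/Olver asymptotics producing the functions $\phi$ and $I(\alpha,\ell,r)$---is the source of the sharp end constants, and nothing in your proposal substitutes for it.
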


If $(X,g)$ is a finite-area surface with hyperbolic cusp ends (and arbitrary metric in the interior)
Parnovski \cite{Parnovski:1995} proved that
$$
N_{g}(t) \sim \frac{1}{2\pi} \vol(X, g) t^2.
$$
This shows that Theorem~\ref{main.thm} is sharp in the case $\nf = 0$.  
It also suggests an intriguing interpretation of the constants appearing in (\ref{main.upper}).  
Suppose we split $X$ into a disjoint union $X_\rc \cup Y_1 \cup \dots \cup Y_\nf$ at the 
boundary of $X_\rc$ and impose Dirichlet boundary conditions at the newly created 
boundaries.  The constant on the right-hand side of (\ref{main.upper}) is the sum of the
asymptotic constants for the resonance counting function of the resulting components.  

To obtain Theorem~\ref{hsurf.thm} from Theorem~\ref{main.thm},
we take the $Y_j$'s to be standard funnels with boundaries at $b_j=0$.  As mentioned above,
$A(Y_j) = \ell_j/4$ in that case.  And since 
$X_\rc$ has geodesic boundary and hyperbolic interior, Gauss-Bonnet gives 
$\vol(X _\rc, g) = -2\pi \chi(X)$.

As in Corollary~\ref{lb.cor}, combining Theorem~\ref{main.thm} with the Guillop\'e-Zworski argument
gives a lower bound on $N_P(t)$ with a constant that depends only on $\ovol(X, g)$ and the
end parameters $\ell_j$ and $b_j$ for $j = 1, \dots, \nf$, assuming that $\ovol(X, g)\ne 0$.

The asymptotic constants $A(Y)$ appearing
in Theorem~\ref{Yj.asymp} have a somewhat complicated form.  
Consider first a model funnel end $F_{\ell, r_0}$ defined by
\begin{equation}\label{Fell.def}
F_{\ell, r_0} \cong [r_0, \infty) \times S^1, \quad ds^2 = dr^2 +  \ell^2 \cosh^2r \frac{d\theta^2}{(2\pi)^2}.
\end{equation}
The case $r_0 = 0$, a standard funnel with geodesic boundary, is simply denoted by $F_\ell$.
The resonance set for the Laplacian on $F_{\ell, r_0}$ with Dirichlet boundary conditions
at $r = r_0$ is denoted $\calR_{F_{\ell, r_0}}$.

In \S\ref{trfun.sec} we will show that for $r_0 \ge 0$, 
\begin{equation}\label{A.f}
A(F_{\ell, r_0}) =  - \frac{\ell}{2\pi} \sinh r_0 +  \frac{4}{\pi} \int_{0}^{\frac{\pi}2} \int_0^\infty 
\frac{[I(xe^{i\theta},\ell, r_0)]_+}{x^3}\>dx\>d\theta,
\end{equation}
where $[\cdot]_+$ denotes the positive part and, with $\omega := 2\pi/\ell$,
\begin{equation}\label{Idef}
\begin{split}
I(\alpha,\ell,r) & := \re\left[ 2 \alpha \log \left( \frac{\alpha\sinh r + \sqrt{\omega^2 + \alpha^2 \cosh^2 r}}{\sqrt{\omega^2 
+ \alpha^2}} \right) \right]  \\
&\qquad + \omega \arg \left( \frac{\sqrt{\omega^2 + \alpha^2 \cosh^2 r} - i\omega \sinh r}{\sqrt{\omega^2 
+ \alpha^2 \cosh^2 r} + i \omega \sinh r} \right) + \pi (\im \alpha - \omega) .
\end{split}
\end{equation}
(The principal branch of log is used in all such formulas.)
The integral in (\ref{A.f}) is explicitly computable in the case $r_0 = 0$, 
since $I(xe^{i\theta}, \ell, 0) = \pi(x \sin\theta - \omega)$.
In this case we recover the asymptotic constant for the standard funnel, $A(F_{\ell}) = \ell/4$.

\begin{figure} 
\psfrag{rp}{$r_0=1$}
\psfrag{r0}{$r_0=0$}
\psfrag{rm}{$r_0=-1$}
\psfrag{10}{$10$}
\begin{center}  
\includegraphics{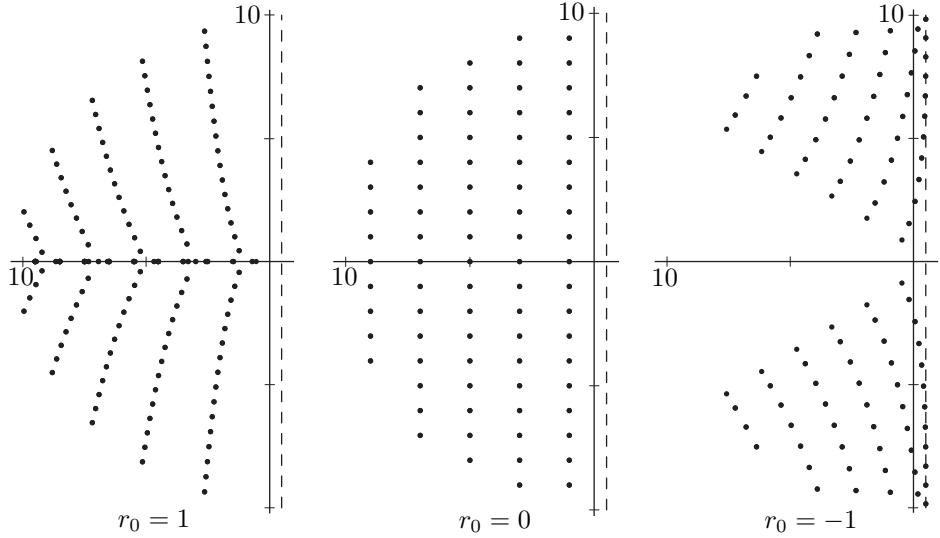} 
\end{center}
\caption{Resonance sets of the funnel $F_{\ell, r_0}$ with different boundary locations $r_0$, 
shown for $\ell = 2\pi$.}\label{extrfun}
\end{figure}
It is interesting to compare the resonance sets of truncated funnels $F_{\ell, r_0}$ with $r_0 > 0$
to extended funnels with $r_0 < 0$.  The two cases are quite different in terms of 
the classical dynamics; an extended funnel contains a trapped geodesic, while truncated funnels
are non-trapping.   Because of this change in dynamics, we expect the 
distribution of resonances near the critical line to 
change dramatically as $r_0$ switches from positive to negative.  
Figure~\ref{extrfun} illustrates these differences.  In the non-trapping case,
on the left, the distance from the resonances to the critical line increases logarithmically
as $\im s\to \infty$.   For the trapping case, on the right, the distance to the critical line 
decreases exponentially.  These behaviors are consistent with results on 
resonance-free regions for asymptotically hyperbolic manifolds by Guillarmou 
\cite{Gui:2005c}.

Of course, the asymptotics of the global counting function $N_P(t)$ are not expected 
to be sensitive to the dynamics.
Indeed, we will show in \S\ref{exfun.sec} that the formula (\ref{A.f}) for the asymptotic constant 
of $N_{F_{\ell, r_0}}(t)$ remains valid for $r_0 <0$.
This exact asymptotic can be compared to the upper bound obtained for the extended
funnel from Theorem~\ref{main.thm}, which is
\begin{equation}\label{fell.ub}
\frac{\tN_{F_{\ell, r_0}}(a)}{a^2} \le -\frac{\ell}{2\pi} \sinh r_0 + \frac{\ell}4,\quad\text{for }r_0 \le 0.
\end{equation}
Figure~\ref{Aplot} illustrates the diffierence between the upper bound (\ref{fell.ub}) and the sharp asymptotic
in this situation.  Given this discrepancy, one might think that the bound in
Theorem~\ref{main.thm} could be improved by moving the boundary of $K$ further into the interior 
of the surface (i.e. by allowing $b_j <0$ in the definition (\ref{f.end})).
Unfortunately, for reasons that we will explain in \S\ref{scdet.sec}, it does not seem possible to obtain 
any improvement this way.
\begin{figure} 
\psfrag{A}{$A(F_{\ell, r})$}
\psfrag{r}{$r$}
\psfrag{l4}{$\ell/4$}
\psfrag{lsinh}{$-\frac{\ell}{2\pi} \sinh r + \frac{\ell}4$}
\psfrag{1}{$1$}
\psfrag{2}{$2$}
\psfrag{4}{$4$}
\psfrag{-1}{$-1$}
\begin{center}  
\includegraphics{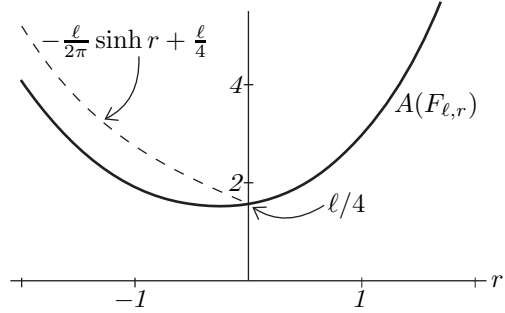} 
\end{center}
\caption{The exact asymptotic constant for $F_{\ell, r}$ as a function of boundary location $r$, 
shown for $\ell = 2\pi$.  The dotted line shows the bound from Theorem~\ref{main.thm}.}\label{Aplot}
\end{figure}

In the hyperbolic planar case, the model problem for $Y_j$ is scattering by a spherical obstacle 
in $\bbH^2$, i.e. on the exterior Dirichlet domain $\Omega_{r_0} := \{r \ge r_0\}\subset \bbH^2$.  
The resonance asymptotics for this spherical obstacles in $\bbH^{n+1}$ were worked out in 
Borthwick \cite[Thm.~1.2]{Borthwick:2009}.  In two dimensions the result is
\begin{equation}\label{A.hp}
A(\Omega_{r_0}) = 2 - \cosh r_0 + \frac{4}{\pi} \int_{0}^{\frac{\pi}2} \int_0^\infty 
\frac{[H(xe^{i\theta}, r_0)]_+}{x^3}\>dx\>d\theta,
\end{equation}
where 
\begin{equation}\label{Hdef}
\begin{split}
H(\alpha, r) & :=   \re \left[ 
2\alpha \log \left(\frac{\alpha \cosh r +  \sqrt{1 + \alpha^2 \sinh^2 r}}{\sqrt{\alpha^2 - 1}} \right) \right] \\
&\qquad + \log \left| \frac{\cosh r - \sqrt{1 + \alpha^2 \sinh^2 r}}{\cosh r + \sqrt{1 + \alpha^2 \sinh^2 r}} \right|.
\end{split}
\end{equation}

The paper is organized as follows.  The basic material on the
resolvent and resonances of the operator $P$ is reviewed in \S\ref{res.sec}. 
In \S\ref{det.sec} we present the factorization 
formula for the relative scattering determinant and show that this leads to Weyl asymptotics 
for the scattering phase
and a counting formula for resonances based on contour integration.  The growth estimates on 
the scattering determinant and the resulting proof of Theorem~\ref{main.thm} are given
in \S\ref{scdet.sec}, assuming certain estimates to be developed in later sections.
The derivation of Corollary~\ref{lb.cor} is also given in \S\ref{scdet.sec}.
In \S\ref{fmode.sec}, we develop the asymptotic analysis of Dirichlet eigenmodes on hyperbolic 
funnels.  These asymptotics are applied in \S\ref{fdet.sec} to prove the Poisson operator estimates 
needed for \S\ref{scdet.sec}.  Finally, in \S\ref{trfun.sec} and \S\ref{exfun.sec} we establish
the exact asymptotic constant (\ref{A.f}) for the truncated and extended funnel cases, respectively,
establishing the funnel part of Theorem~\ref{Yj.asymp} in particular.

\vskip12pt\noindent
\textbf{Acknowledgment.}   I would like to thank to Plamen Stefanov for suggesting the extension
of his results to the hyperbolic setting.   I am also grateful for support from
the Banff International Research Station, where some of the work for this project was done.

\section{Resonances}\label{res.sec}

The context introduced in \S\ref{intro.sec}
differs from that of Guillop\'e-Zworski \cite{GZ:1995a, GZ:1997} in two relatively minor ways:
hyperbolic planar ends are allowed in addition to funnels, and
a compactly supported potential $V$ is possibly added to $\Delta_g$.  
The latter addition really is trivial, but the inclusion of hyperbolic planar ends 
requires a few extra estimates on model terms. In this section we will briefly review the theory
\cite{GZ:1995a, GZ:1997}, in order to explain those additional estimates.

First of all, to define resonances we need analytic continuation of the resolvent, 
$R_P(s) := (P - s(1-s))^{-1}$ from its original domain $\re s > \tfrac12$.
Each end $Y_j$ is isometric to a portion of either $\bbH$ or the model funnel $F_{\ell_j}$,
and we can use this identification to pullback model resolvents $R^0_{Y_j}(s)$. 
After appropriate cutoffs are applied, we can treat these model terms as operators on $X$,
whose kernels have support only in the corresponding ends $Y_j$.  Similarly,
we define $R^0_{C_j}(s)$ by pullback from the model cusp.  
Suppose that $\chi^j_k \in \cinf(X)$ are cutoff functions for $j = 1, \dots, \nf+\nc$ and
$k= 0,1,2$, such that 
$$
\chi^j_k = \begin{cases}0 &\text{for }r\ge k+1 \text{ in end }j,\\ 
1 &\text{for }r\le k \text{ in end }j,\\
1 &\text{ outside of end }j.
\end{cases}
$$
We also set $\chi_k := \prod_j \chi^j_k$.

For some $s_0$ with $\re s_0$ sufficiently large, so that $R_P(s_0)$ is defined,
we set
$$
M(s) := \chi_2 R_P(s_0) \chi_1 + \sum_{j=1}^\nf (1-\chi^j_0) R^0_{Y_j}(s) (1 - \chi^j_1)
+ \sum_{j=\nf+1}^{\nf+\nc} (1-\chi^j_0) R^0_{C_j}(s) (1 - \chi^j_1).
$$
This parametrix satisfies
$$
(P - s(1-s)) M(s) = I - L(s),
$$
where
\[
\begin{split}
L(s) & := - [\Delta_g, \chi_2] R_P(s_0) \chi_1 + (s(1-s) - s_0(1-s_0)) \chi_2 R_P(s_0) \chi_1 \\
&\qquad + \sum_{j=1}^\nf [\Delta_g,\chi^j_0] R^0_{Y_j}(s) (1 - \chi^j_1)
+ \sum_{j=\nf+1}^{\nf+\nc} [\Delta_g, \chi^j_0] R^0_{C_j}(s) (1 - \chi^j_1).
\end{split}
\]

The are two differences here from the construction of \cite{GZ:1995a}.
First of all, some of our model terms $R^0_{Y_j}(s)$ will be copies
of $R_\bbH(s)$ instead of the funnel resolvent.  Second, we follow the treatment in
Borthwick \cite{Borthwick} in using the model resolvent for a full cusp, 
rather than modifying the original Hilbert space.  

Let $\rho \in \cinf(X)$ be proportional to $e^{-r}$ in the ends $Y_j$ and $C_j$,
with respect to the coordinate systems given in (\ref{hp.end}--\ref{c.end}).
The operator $L(s)$ is compact on $\rho^N L^2(X, dg)$ for $\re s > \tfrac12-N$ and defines
a meromorphic family with poles of finite rank.  (The structure of the kernel of 
$R^0_{Y_j}(s)$ at infinity is the same whether $Y_j$ is a funnel or hyperbolic planar, so this
part of the argument is unaffected by the addition of hyperbolic planar ends.)

By choosing $s$ and $s_0$ appropriately we can
insure that $I - L(s)$ is invertible at some $s$, and then the analytic Fredholm yields
\begin{equation}\label{RP.param}
R_P(s) = M(s) (I-L(s))^{-1}.
\end{equation}
This proves the following result, a slight generalization of \cite[Thm.~1]{GZ:1995a}:
\begin{theorem}[Guillop\'e-Zworski]\label{RP.mero}
The formula (\ref{RP.param}) defines a meromorphic extension of $R_P(s)$ to 
a bounded operator on $\rho^N L^2(X, dg)$ for $\re s > \tfrac12-N$, with poles
of finite rank.
\end{theorem}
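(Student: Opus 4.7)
The plan is to exploit the parametrix identity $(P - s(1-s)) M(s) = I - L(s)$ stated in the excerpt, which follows by direct calculation from the fact that $R_P(s_0)$ and the model resolvents $R^0_{Y_j}(s)$, $R^0_{C_j}(s)$ each invert the corresponding operator on their domains. Since each model resolvent already has a meromorphic continuation to all of $\bbC$ (explicit for $R_\bbH(s)$ and the cusp resolvent, and standard for $R_{F_{\ell_j}}(s)$), the operator family $M(s)$ is meromorphic, and formally inverting the identity reduces the theorem to showing that $(I - L(s))^{-1}$ exists as a meromorphic family on $\rho^N L^2(X, dg)$ for $\re s > \tfrac12 - N$, with poles of finite rank.

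The main step is to verify that $L(s)$ is a meromorphic family of compact operators on $\rho^N L^2(X, dg)$ in this half-plane. The first two terms of $L(s)$ involve $R_P(s_0)$ sandwiched between cutoffs supported in the core $K$, hence have compactly supported smoothing kernels and are compact on every weighted $L^2$ space. The remaining terms each have the form $[\Delta_g, \chi^j_0] R^0(s) (1 - \chi^j_1)$: the commutator is supported in the transition annulus where $\chi^j_0$ varies (hence compactly in $r$), while the kernel of the model resolvent, restricted to the region where $1-\chi^j_1$ is nontrivial, decays in the outgoing variable at the rate $\rho^{s}$ dictated by the indicial behavior of the model at infinity. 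For $\re s > \tfrac12 - N$ this decay is controlled by the weight $\rho^N$, yielding Hilbert-Schmidt estimates, and the resulting operator family is meromorphic in $s$ with poles of finite rank coming from those of $R^0(s)$.

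Having compactness and meromorphy of $L(s)$, it suffices by the analytic Fredholm theorem to exhibit one value of $s$ at which $I - L(s)$ is invertible. Taking $\re s$ large (and the auxiliary $s_0$ chosen so that $R_P(s_0)$ is bounded on unweighted $L^2$), each piece of $L(s)$ has small operator norm on $\rho^N L^2$: the compactly supported pieces are uniformly bounded, while the model resolvent pieces decay on the relevant region as $\re s \to \infty$. A Neumann series argument then gives invertibility, and analytic Fredholm produces the meromorphic inverse with poles of finite rank. Composing with $M(s)$, which is itself bounded from $L^2_{\rm comp}$ into $\rho^N L^2$ in the same range of $s$, delivers the meromorphic extension $R_P(s) = M(s)(I - L(s))^{-1}$ with the stated properties.

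The only point requiring genuine care, beyond the arguments already present in \cite{GZ:1995a} and Borthwick's cusp treatment in \cite{Borthwick}, is checking that the hyperbolic planar model resolvent $R_\bbH(s)$ has the same weighted-kernel structure at infinity as the funnel resolvent, so that the compactness estimate on $L(s)$ is unchanged by the addition of hyperbolic planar ends. This is a short verification from the explicit Green's function on $\bbH^2$: the large-$r$ behavior of the kernel factors as $e^{-sr}$ against a smooth tangential profile, which is exactly the structure needed. Once this is in hand, every other step of the Guillop\'e-Zworski proof carries over verbatim.
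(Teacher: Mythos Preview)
Your outline is the same parametrix/analytic-Fredholm argument the paper sketches (and attributes to Guillop\'e--Zworski), down to the remark that only the hyperbolic-planar model kernel needs fresh checking.

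One step as written does not go through, however: the Neumann-series justification of invertibility. The interior error term $(s(1-s)-s_0(1-s_0))\,\chi_2 R_P(s_0)\chi_1$ grows quadratically in $s$, so sending $\re s\to\infty$ with $s_0$ fixed does not drive $\|L(s)\|$ below $1$; and your own phrasing that ``the compactly supported pieces are uniformly bounded'' (rather than small) is not enough for a convergent Neumann series. The paper's ``choosing $s$ and $s_0$ appropriately'' encodes the standard fix: take $s=s_0$ with $\re s_0$ large, so that the quadratic term vanishes identically and every remaining piece of $L(s_0)$ is a first-order commutator composed with a resolvent whose $L^2\to H^1$ norm is $O(|s_0|^{-1})$. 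With that adjustment your argument matches the paper's.
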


Meromorphic continuation allows us to define
$\calR_P$ as the set of poles of $R_P(s)$, listed according to multiplicities 
given by
$$
m_P(\zeta) := \rank \res_\zeta R_P(s).
$$
The same parametrix construction also leads to an estimate of the order of growth
of the resonance counting function.  The following is a slight generalization of \cite[Thm~2]{GZ:1995a}:
\begin{theorem}[Guillop\'e-Zworski]\label{NP.bound}
The resonance counting function satisfies a bound
$$
N_P(t) = O(t^2).
$$
\end{theorem}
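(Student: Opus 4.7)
The plan is to deduce the $O(t^2)$ bound directly from the parametrix identity $R_P(s) = M(s)(I-L(s))^{-1}$ of Theorem~\ref{RP.mero}, following the Guillop\'e-Zworski template \cite{GZ:1995a} with only the few extra estimates needed to handle the new ingredients (hyperbolic planar model terms and a compactly supported $V$). Since $M(s)$ is meromorphic with only finitely many poles in any bounded region of the physical sheet, and its poles contribute to $\calR_P$ at controlled rank, the essential task is to bound the multiplicities of the zeros of $(I-L(s))^{-1}$ by the zeros of a regularized Fredholm determinant of $L(s)$.

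First I would fix a reference half-plane and verify that $L(s)$, acting on $\rho^N L^2$ for $N$ large, lies in a suitable Schatten class $\calB^p$. The three types of summands in $L(s)$ are built from the compactly truncated model resolvents $R^0_{Y_j}(s)$, $R^0_{C_j}(s)$ and the single term $\chi_2 R_P(s_0)\chi_1$; the commutator cutoffs $[\Delta_g,\chi^j_0]$ restrict each kernel to a compact annulus in $r$, so on a two-dimensional surface each summand has a smooth compactly supported kernel (times a polynomial in $s$ factor in the $s_0$-term) and is thus in every Schatten class. The key new point for the hyperbolic planar ends is that the kernel of the $\bbH^2$ resolvent restricted to a compact annulus obeys the same pointwise bounds in $s$ as the funnel model, a routine estimate from the explicit Legendre-function representation.

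Next I would establish the growth estimate
\[
\norm{L(s)}_{\calB^p} \le C e^{C|s|^2},
\]
uniformly for $s$ in a right half-plane away from the poles, with exponent $2$ reflecting the dimension. The compact-core piece $\chi_2 R_P(s_0)\chi_1$ is polynomial in $s$; the model contributions require the standard upper bounds on $R^0_\bbH(s)$, $R^0_{F_\ell}(s)$ and $R^0_C(s)$ when $s$ has controlled distance from the scattering poles, combined with the compact support of $[\Delta_g,\chi^j_0]$ to absorb the exponential weight $\rho^N$. For $p$ chosen large enough that $L(s)^p$ is trace class, the regularized determinant $D(s) := \det_p(I - L(s))$ is then entire (after removing the finitely many reference poles of $M$) and of order at most $2$.

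With $D(s)$ in hand, the multiplicities of poles of $R_P(s)$ in $\{|s-\tfrac12|\le t\}$ are bounded by the multiplicities of zeros of $D(s)$ there, up to a harmless additive term from the poles of $M$. An application of Jensen's formula on a slightly enlarged disk, combined with the $\exp(C|s|^2)$ growth of $D(s)$, then yields $N_P(t) = O(t^2)$. The main obstacle is purely technical: verifying that the hyperbolic planar model resolvent $R^0_\bbH(s)$ obeys the same kind of Schatten and pointwise bounds on compact sets as the funnel and cusp resolvents used in \cite{GZ:1995a}, so that the Schatten-norm estimate on $L(s)$ goes through uniformly. Once that is in place the remainder of the argument is a direct transcription of the Guillop\'e-Zworski scheme.
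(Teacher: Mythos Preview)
Your sketch has the right overall architecture but contains two genuine gaps. First, the kernel of $L(s)$ is not compactly supported: the commutator $[\Delta_g,\chi^j_0]$ localizes only the left variable, while $(1-\chi^j_1)$ is supported out to infinity in the $j$-th end. This is why the paper (following \cite{GZ:1995a}) passes to $L_3(s) := L(s)\chi_3$ and works with $D(s) = \det(I - L_3(s)^3)$, together with the relation $R_P(s)\chi_3 = M(s)\chi_3(I + L_3 + L_3^2)(I - L_3^3)^{-1}$ and Vodev's lemma to connect the zeros of $D$ to $\calR_P$. Second, and more seriously, a Schatten bound $\|L(s)\|_{\calB^p} \le Ce^{C|s|^2}$ does \emph{not} yield order-$2$ growth of $\det_p(I-L(s))$: the standard inequality $|\det_p(I-A)| \le \exp(C_p\|A\|_p^p)$ would give only doubly exponential growth, and hence no useful bound on $N_P(t)$ from Jensen's formula.

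What the argument actually requires is decay of individual singular values, so that the relevant Schatten sums are $O(\brak{s}^2)$, not merely an exponential norm bound. Obtaining such estimates in the non-physical half-plane $\re s < \tfrac12$ (where the resonances sit) requires expressing the continued model resolvents via Poisson operators and scattering matrices; this is why the paper needs the bounds (\ref{EH.est}) on $E_\bbH(s)$ and (\ref{SH.est}) on $S_\bbH(s)$, in addition to the resolvent bounds (\ref{RH.est1})--(\ref{RH.est2}) you allude to. Your restriction to ``a right half-plane'' is insufficient for the same reason: Jensen's formula needs the determinant estimate on full disks centered on the critical line.
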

Our version requires just a few additional estimates.
To obtain this bound on the counting function, Guillop\'e-Zworski \cite{GZ:1995a} introduced a
Fredholm determinant 
$$
D(s) := \det (I - L_3(s)^3),
$$
where
$$
L_3(s) := L(s)\chi_3.
$$
Using the relation 
$$
R_P(s)\chi_3 = M(s) \chi_3 (I + L_3(s) + L_3(s)^2) (I - L_3(s)^3)^{-1},
$$
and a result of Vodev \cite[Appendix]{Vodev:1994}, they showed that $\calR_P$ is included in the 
union of the set of poles of $D(s)$ with 3 copies of the union of the sets of poles of $M(s)$ and $L_3(s)$. 

The only change that the inclusion of hyperbolic planar ends requires in this argument is
that for each hyperbolic planar end we include a copy of $\calR_\bbH$ among the
possible poles of $M(s)$ and $L_3(s)$.  Since $N_\bbH(t) = O(t^2)$, just as for funnels, the 
problem reduces as in \cite{GZ:1995a} to an estimate of the growth of $D(s)$.
Through Weyl's inequality, the estimate of $D(s)$ 
is broken up into estimates on the singular values of various model terms.  
We must check that the relevant estimates are satisfied by the hyperbolic planar
model terms.

There are three estimates to consider.  The first concerns the resolvent $R_\bbH(s)$.
If $Q_1, Q_2$ are compactly supported differential operators of orders $q_1, q_2$, 
with disjoint supports, then for $\vep>0$,
\begin{equation}\label{RH.est1}
\norm{Q_1 R_\bbH(s) Q_2} \le C(q_j, \vep) \>\brak{s}^{q_1+q_2},\quad\text{for }\re s > \vep,
\end{equation}
and
\begin{equation}\label{RH.est2}
\norm{Q_1 R_\bbH(s) Q_2} \le C(q_j, \vep) \>\brak{s}^{q_1+q_2-1},\quad\text{for }\re s > \tfrac12 +\vep.
\end{equation}
To prove either of these, one can simply use the explicit formula,
$$
R_\bbH(s;z,z') = \frac1{4\pi} \int_0^1 \frac{(t(1-t))^{s-1}}{[t + \sinh^2 d(z,z')]^s}\>dt,
$$
and repeat the argument from \cite[Lemma~3.2]{GZ:1995a}.

The next estimate is for the Poisson kernel $E_\bbH(s)$.  In the Poincar\'e ball model $\bbB$,
this kernel is given by
$$
E_\bbB(s;z,\theta) = \frac{1}{4\pi}  \frac{\Gamma(s)^2}{\Gamma(2s)} \frac{(1-|z|^2)^s}{|e^{i\theta} - z|^{2s}},
\qquad z \in \bbB, \>\theta \in \bbR/(2\pi\bbZ).
$$
Given a compact set $K \subset \bbB$
and $\vep>0$, we have
\begin{equation}\label{EH.est}
\bigl|\del^k_\theta E_\bbB(s; z, \theta)\bigr| \le C(K,\vep)^k \>k! \>e^{c\brak{s}}, \quad\text{for }z \in K, \> k \in \bbN.
\end{equation}
This is not difficult to prove directly by induction, or one can use an analyticity argument as in
\cite[Lemma~3.1]{GZ:1995a}.

Finally, we must estimate the scattering matrix $S_\bbH(s)$.  We can write this explicitly 
in terms of Fourier modes, 
$$
S_\bbH(s) = \sum_{k\in\bbZ} [S_\bbH(s)]_k e^{ik(\theta-\theta')},
$$
where
$$
[S_\bbH(s)]_k = 2^{1-2s} \frac{\Gamma(\tfrac12 - s)}{\Gamma(s - \tfrac12)}  \frac{\Gamma(s+|k|)}{\Gamma(1-s+|k|)}
$$
Using Stirling's formula, it is easy to use this expression for the eigenvalues to estimate the singular values
of $S_\bbH(s)$.  Assuming that $\re s < \tfrac12 - \vep$ and $\dist(s, -\bbN_0) > \eta$, we have
\begin{equation}\label{SH.est}
\mu_j(S_\bbH(s)) \le \exp \left[C(\eta)\> \brak{s} + \re(1-2s) \log \frac{\brak{s}}{j}  \right].
\end{equation}
This is the analog of \cite[Lemma~4.2]{GZ:1997}.  

With these model estimates in place, one can simply apply Guillop\'e-Zworski's
original argument (treating the cusp contributions as in \cite[\S9.4]{Borthwick}) to prove that
$$
|g(s) D(s)| \le e^{C \brak{s}^2},
$$
where $g(s)$ is a entire function of order 2 and finite type, with zeros derived from 
$\calR_\bbH$ and the the model resolvent sets for the funnels and cusps.
This yields the proof of Theorem~\ref{NP.bound}.

\section{Relative scattering determinant}\label{det.sec}

To define scattering matrices, we will fix a function $\rho \in \cinf(X)$ which serves as a
boundary defining function for a suitable compactification of $X$.  We start with smooth
positive functions $\rho_\rf$, $\rho_\rc$ satisfying
$$
\rho_\rf = \begin{cases}2e^{-r} & \text{in each }Y_j, \\
1 & \text{in each }C_j,  \end{cases}
\qquad
\rho_\rc = \begin{cases}1 & \text{in each }Y_j, \\
e^{-r} & \text{in each }C_j.  \end{cases}
$$
Then we set $\rho = \rho_\rf \rho_\rc$ for the global boundary defining function.
 
The ends $Y_j$ are conformally compact, and we distinguish between the internal boundary $\del Y_j$,
and the boundary at infinity $\diY_j$ induced by the conformal compactification.  The funnel ends $Y_j$
come equipped with a length parameter $\ell_j$, the length of the closed geodesic bounding the finite end.
If we assign length $\ell_j = 2\pi$ to a hyperbolic planar end, for consistency, then the metric 
induced by $\rho^{2} g$ on the boundary of $Y_j$ at infinity gives an isometry
$$
\diY_j \cong \bbR/\ell_j \bbZ.
$$

The cusp ends can be compactified naturally by lifting to $\bbH$ and invoking
the Riemann-sphere topology, as described in \cite[\S6.1]{Borthwick}.  The resulting boundary
$\diC_j$ consists of a single point.  

Despite the discrepancy in dimensions, it will be convenient to group all of the infinite boundaries
together as
$$
\diX := \diY_1 \cup \dots \cup \diY_{\nf} \cup \diC_{\nf+1} \cup \dots \cup \diC_{\nf+\nc}.
$$
Then we have
$$
\cinf(\diX) :=  \cinf(\bbR/\ell_1 \bbZ)\oplus \dots \oplus \cinf(\bbR/\ell_{\nf} \bbZ) \oplus \bbC^{\nc},
$$
and similarly for $L^2(\diX)$.

In \S\ref{res.sec}, $R^0_{Y_j}(s)$ denoted the pullback of the model resolvent 
in the parametrix construction.  Carrying on with this notation, we also define the model Poisson operators,
$$
E^0_{Y_j}(s): \cinf(\diY_j) \to L^2(Y_j),
$$
and scattering matrices, 
$$
S^0_{Y_j}(s): \cinf(\diY_j) \to \cinf(\diY_j).
$$
Similarly, for the cusp ends we have the Poisson kernels 
$$
E^0_{C_j}(s): \bbC \to L^2(C_j).
$$
There is no analog of the model scattering matrix for a cusp; see \cite[\S7.5]{Borthwick}
for an explanation of this.

The scattering matrix $S_P(s)$ is defined as a map on $C^\infty(\diX)$, which we
can write as
\begin{equation}\label{SP.block}
S_P(s) = \begin{pmatrix} S^\rff(s) & S^\rfc(s) \\ S^\rcf(s) & S^\rcc(s) \end{pmatrix},
\end{equation}
where the blocks are split between the `funnel-type' ends $Y_j$ and the cusps $C_j$.
The block $S^\rff(s)$ is a matrix of pseudodifferential operators; all other blocks have finite rank.
To define a scattering determinant, we normalize using the background operator
$$
S_0(s) = \begin{pmatrix} S^0_Y(s) & 0 \\ 0 & I \end{pmatrix},
$$
where 
$$
S^0_Y(s) = S^0_{Y_1}(s) \oplus \dots \oplus S^0_{Y_{\nf}}(s).
$$

The relative scattering determinant is then defined by
\begin{equation}\label{tau.def}
\tau(s) = \det S_P(s) S_0(s)^{-1}.
\end{equation}
The poles of the background scattering matrix $S_0(s)$ define a background resonance
set 
\begin{equation}\label{R0.def}
\calR_0 = \bigcup_{j=1}^\nf 
\begin{cases}\calR_{F_{\ell_j}} & \text{for a funnel end,} \\
\calR_{\bbH} &  \text{for a hyperbolic planar end.}    \end{cases}
\end{equation}
For $* = 0$ or $P$ let $H_*(s)$ denote the Hadamard product over $\calR_*$,
$$
H_*(s) := \prod_{\zeta \in \calR_*} \left(1 - \frac{s}{\zeta}\right) e^{\frac{s}{\zeta} + \frac{s^2}{2\zeta^2}}.
$$
Theorem~\ref{NP.bound} implies that the product for $H_P(s)$ converges, and for $H_0(s)$
this is clear from the definition of $\calR_0$.

\begin{proposition}\label{tau.factor}
For $P = \Delta_g + V$, the relative scattering determinant admits a factorization
$$
\tau(s) = e^{q(s)} \frac{H_P(1-s)}{H_P(s)} \frac{H_0(s)}{H_0(1-s)},
$$
where $q(s)$ is a polynomial of degree at most 2.
\end{proposition}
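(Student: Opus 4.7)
The plan is to prove Proposition~\ref{tau.factor} via the Hadamard factorization theorem, applied to the quotient
\begin{equation*}
F(s) := \tau(s) \cdot \frac{H_P(s)}{H_P(1-s)} \cdot \frac{H_0(1-s)}{H_0(s)}.
\end{equation*}
The goal is to show that $F(s)$ extends to an entire function of order at most $2$ with no zeros, and then Hadamard gives $F(s) = e^{q(s)}$ with $\deg q \le 2$. To make this work, I need to verify three things: (i) the divisor of $\tau(s)$ is exactly captured by the Hadamard products, (ii) the canonical products $H_P$ and $H_0$ have order at most $2$, and (iii) $\tau(s)$ itself has order at most $2$.

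Item (ii) is immediate: $H_0$ is of order $2$ since $\calR_0$ consists of at most $\nf$ copies of hyperbolic planar/funnel model resonance sets, each with $N(t) = O(t^2)$, and $H_P$ is of order $2$ by Theorem~\ref{NP.bound}. For (i), I would first establish the functional equation $S_P(s) S_P(1-s) = I$, and likewise for $S_0(s)$, so that $\tau(s) \tau(1-s) = 1$. Then I would identify the divisor of $\det S_P(s)$: using the standard resolvent--Poisson--scattering matrix identity
\begin{equation*}
R_P(s) - R_P(1-s) = (2s-1) E_P(s) \tsp E_P(1-s),
\end{equation*}
one shows that the multiplicity of $\zeta$ as a pole of $\det S_P(s)$ equals $m_P(\zeta)$ for $\re \zeta < \tfrac12$ (with the usual adjustment on the critical line absorbed into the polynomial $q$). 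By the functional equation, zeros of $\det S_P(s)$ at $s = 1-\zeta$ likewise have multiplicity $m_P(\zeta)$. The same analysis applied to each model factor of $S_0(s)$ accounts for the $H_0$ factors. Consequently $F(s)$ has no zeros or poles on $\bbC$.

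For (iii), the growth of $\tau(s)$ splits into contributions from the `funnel-type' block and the finite-rank pieces. The finite-rank scattering contributions of the cusps and the relative block $S^\rff(s)(S^0_Y(s))^{-1}$ can be controlled using the singular value estimate (\ref{SH.est}) (for the hyperbolic planar model) together with the analogous estimate for model funnels and the Poisson operator bound (\ref{EH.est}); via Weyl's inequality these combine to give $\log|\tau(s)| = O(\brak{s}^2)$ off small neighborhoods of $\calR_P \cup \calR_0$. Combining with the product lower bound $|H_P(s) H_0(1-s)/(H_P(1-s) H_0(s))| \ge e^{-C\brak{s}^2}$ on such a deleted set (from the minimum modulus theorem for canonical products of order $2$), we obtain $|F(s)| \le e^{C\brak{s}^2}$ on a set whose complement is a union of small discs, and then the maximum principle extends the bound globally.

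The main obstacle I expect is the order-$2$ estimate on $\tau(s)$ off resonances, specifically because the hyperbolic planar ends are new in this setting: I need to confirm that the normalization by $S^0_Y(s)$ truly cancels the model singularities from the hyperbolic planar pieces in the \emph{relative} determinant, not just from the funnels. This requires that the structure of $S^\rff(s)$ splits into a part asymptotic to $S^0_Y(s)$ in the pseudodifferential sense plus a finite-rank remainder controllable by the Poisson kernel estimate (\ref{EH.est}), so that $\log \det S^\rff(s)(S^0_Y(s))^{-1}$ admits the $O(\brak{s}^2)$ bound. Once this is in hand, Hadamard's theorem applied to $F$ and the fact that $F$ has no zeros yields the claimed factorization with $q$ a polynomial of degree at most $2$.
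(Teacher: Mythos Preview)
Your overall strategy---identify the divisor of $\tau(s)$, control the growth, apply Hadamard---is exactly the structure of the paper's proof (which in turn leans on Guillop\'e--Zworski \cite[Prop.~3.7]{GZ:1997}). The divisor step (i) and the order of the canonical products (ii) are fine; the paper simply cites \cite[Prop.~2.14]{GZ:1997} for the divisor and notes that the extra model estimates (\ref{RH.est1})--(\ref{SH.est}) extend it to hyperbolic planar ends.

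The genuine gap is in your step (iii). You assert that the singular value bound (\ref{SH.est}) and the Poisson estimate (\ref{EH.est}), combined via Weyl's inequality, yield $\log|\tau(s)| = O(\brak{s}^2)$. But this is precisely the argument of \cite{GZ:1997}, and there it only produces $q(s)$ polynomial of degree at most~$4$, not~$2$. The model estimates you cite control $D(s)$ or the relative determinant at order $O(\brak{s}^{4})$ growth, not $O(\brak{s}^2)$; getting the sharper bound is not automatic. The paper handles this by a forward reference: the proof of Theorem~\ref{tau.int.thm} (which relies on the cutoff formula of Proposition~\ref{tau.gj.prop} and the detailed funnel-eigenmode asymptotics of \S\ref{fmode.sec}--\S\ref{fdet.sec}) gives $\log|\tau(s)| \le O(a_i^2)$ on a sequence of half-circles in $\{\re s \ge \tfrac12\}$, and since $q$ is already known to be polynomial this forces $\deg q \le 2$. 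Alternatively one could prove an analog of \cite[Lemma~5.2]{Borthwick:2008}, but either way the degree-$2$ bound requires substantially more than the GZ singular value estimates you invoke.

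A secondary issue: your minimum modulus claim $|H_P(s)H_0(1-s)/(H_P(1-s)H_0(s))| \ge e^{-C\brak{s}^2}$ is too strong. Since $N_P(t)\asymp t^2$ and $N_0(t)\asymp t^2$, the Hadamard products are of order~$2$ but \emph{not} of finite type, so the minimum modulus theorem only gives a lower bound $e^{-C_\eta\brak{s}^{2+\eta}}$ away from zeros. This is not fatal (you still get order $\le 2+\eta$ for all $\eta>0$, hence $\deg q\le 2$), but it should be stated correctly. Note also that the paper avoids needing a global two-sided bound on $F$ by working only in $\{\re s\ge\tfrac12\}$ and using that $q$ is already known to be polynomial.
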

\begin{proof}
If the ends $Y_j$ are all hyperbolic funnels, then Guillop\'e-Zworski \cite[Prop.~3.7]{GZ:1997}
proved the factorization formula of with $q(s)$ a polynomial of degree at most 4.  The first
part of the proof, the characterization of the divisor of $\tau(s)$ obtained in \cite[Prop.~2.14]{GZ:1997}, 
remains valid if hyperbolic planar ends are included.

To extend the more difficult part of the argument, which is the estimate that shows $q(s)$ is polynomial,
we require only the extra estimates on model terms given in (\ref{RH.est1}),
(\ref{RH.est2}), (\ref{EH.est}), and (\ref{SH.est}).  With these estimates one can easily extend
the proof of \cite[Prop.~3.7]{GZ:1997}.  We refer the reader also to \cite[\S10.5]{Borthwick},
for an expository treatment of these details.  

To see that the maximal order of $q(s)$ is 2, we could prove an estimate analogous to 
\cite[Lemma~5.2]{Borthwick:2008}.   However, we will be proving a sharper version of this estimate
later in this paper.  From the proof of Theorem~\ref{tau.int.thm}, it will follow that
for some sequence $a_i \to \infty$,
$$
\log |\tau(s)| \le O(a_i^2) \quad\text{for }|s - \tfrac12| = a_i,\>\re s \ge \tfrac12.
$$
Because the Hadamard products $H_*(s)$ have order 2, this implies a bound $|q(s)| = O(|s|^{2+\vep})$
for $\re s \ge \tfrac12$.  This implies $q(s)$ has degree at most 2, since it 
is already known to be polynomial.  (The derivations leading to Theorem~\ref{tau.int.thm} 
require only that $q(s)$ is polynomial, so this argument is not circular.)
\end{proof}

To apply the factorization of $\tau(s)$ to resonance counting we introduce the 
relative scattering phase of $P$, defined as
\begin{equation}\label{sigma.def}
\sigma(\xi) := \frac{i}{2\pi} \log \tau(\tfrac12+i\xi),
\end{equation}
with branches of the log chosen so that $\sigma(\xi)$ is continuous and $\sigma(0) = 0$.
By the properties of the relative scattering matrix, $\sigma(\xi)$ is real and $\sigma(-\xi) = - \sigma(\xi)$.

To state the relative counting formula, we let $N_0$ denote the counting function associated to $\calR_0$,
$$
N_0(t) := \#\bigl\{\zeta \in \calR_0:\>|\zeta-\tfrac12| \le t\bigr\},
$$
and $\tN_0(a)$ the corresponding regularized counting function.

\begin{corollary}\label{relcount}
As $a\to \infty$,
\begin{equation}\label{tau.count}
\tN_P(a) - \tN_0(a) = 4\int_0^a \frac{\sigma(t)}{t}\>dt + 
\frac{2}{\pi} \int_{0}^{\frac\pi2} \log |\tau(\tfrac12+ ae^{i\theta})|\>d\theta + O(\log a).
\end{equation}
\end{corollary}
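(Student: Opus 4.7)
The plan is to apply a Jensen--Carleman-type identity to $\tau(s)$, using the factorization of Proposition~\ref{tau.factor} to identify the divisor contributions. Inside the half-disk $D_a := \{s : |s - \tfrac12|\le a,\, \re s \ge \tfrac12\}$, the divisor of $\tau$ consists (modulo a finite set of exceptional points coming from the discrete eigenvalues in $\calR_P \cap \{\re s > \tfrac12\}$, topological zeros of $H_0$, and any resonances on the critical line) of zeros at $\{1-\zeta : \zeta \in \calR_P,\, \re\zeta \le \tfrac12\}$ and poles at $\{1-\eta : \eta \in \calR_0\}$. Because the involution $z \mapsto 1-z$ preserves $|z - \tfrac12|$, the counting functions of these shifted zeros and poles inside $D_a$ coincide with $N_P(a)$ and $N_0(a)$ up to an $O(1)$ error.

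To produce the regularized counting function $\tN$ rather than the raw $N$, the next step is to apply Jensen's formula to $\tau$ on the family of half-disks $D_r$ and integrate the resulting identity against the weight $2\,dr/r^2$ over $r \in (0, a)$. This is the Stefanov-style manipulation from \cite[Lemma~1]{Stefanov:2006}: for a divisor $\{w_k\}$,
$$
\int_0^a \frac{2}{r^2}\,\#\bigl\{k : |w_k|\le r\bigr\}\,dr = 2\sum_{|w_k|\le a}\left(\frac{1}{|w_k|} - \frac{1}{a}\right),
$$
so the left-hand side of the integrated identity becomes exactly $\tN_P(a) - \tN_0(a)$ after separating the zero and pole contributions of $\tau$.

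On the right-hand side, two boundary contributions appear. On the outer arc $s = \tfrac12 + ae^{i\theta}$, the identity produces $\frac{2}{\pi}\int_0^{\pi/2}\log|\tau(\tfrac12 + ae^{i\theta})|\,d\theta$, where the reduction from $[-\tfrac\pi2, \tfrac\pi2]$ to $[0, \tfrac\pi2]$ uses the conjugation symmetry $\tau(\bar s) = \overline{\tau(s)}$. On the diameter $s = \tfrac12 + it$, unitarity gives $|\tau(\tfrac12+it)|=1$, so $\log|\tau|$ vanishes identically; but the definition (\ref{sigma.def}) identifies the phase as $\arg\tau(\tfrac12+it) = -2\pi\sigma(t)$. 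Reformulating the diameter boundary term using $\log\tau$ in place of $\log|\tau|$---or equivalently invoking the Cauchy--Riemann equations for the harmonic function $\log|\tau|$ in a neighborhood of the critical line---extracts this phase; after the $2\,dr/r^2$ weighting and an integration by parts, this produces the term $4\int_0^a \sigma(t)/t\,dt$, using the oddness of $\sigma$ to fold $[-a,a]$ onto $[0,a]$.

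The polynomial $q(s)$ of degree at most $2$ contributes no divisor terms and drops out (its contribution to both boundary integrals cancels by the manipulations above), and the finite exceptional divisor yields only $O(\log a)$ through standard terms of the form $\log(a/|\zeta|)$ with $|\zeta|$ fixed. The main obstacle is the diameter contribution: because unitarity forces $\log|\tau| = 0$ on the critical line, the $\sigma$ term cannot be read off from the naive Jensen identity and must be recovered by working with $\log\tau$ directly, with attention to branch cuts at the divisor. The constant $4$ in front of $\int_0^a \sigma(t)/t\,dt$ reflects both the oddness of $\sigma$ (a factor of $2$ from folding the integration domain) and the weight-versus-Cauchy--Riemann interplay (another factor of $2$); getting the constants right in this step is the most delicate piece of the bookkeeping.
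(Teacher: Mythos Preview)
Your proposal is correct and matches the paper's approach: the paper describes the proof simply as ``contour integration of $\tau'/\tau(s)$ around a half-circle centered at $s=\tfrac12$,'' citing \cite[Prop.~3.2]{Borthwick:2009} and Froese \cite{Froese:1998} for details, which is precisely the Jensen--Carleman/argument-principle identity you outline (factorization from Proposition~\ref{tau.factor} to identify the divisor, then boundary integrals on the arc yielding the $\log|\tau|$ term and on the diameter yielding the scattering-phase term). Your identification of the exceptional finite divisor contributing only $O(\log a)$ and of the symmetry reductions is also in line with the cited argument.
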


\noindent
The proof is by contour integration of $\tau'/\tau(s) $ around a half-circle centered at
$s = \tfrac12$.  See \cite[Prop.~3.2]{Borthwick:2009} for the details of the
derivation of (\ref{tau.count}) from Proposition~\ref{tau.factor}.
This is the analog of a formula developed by Froese \cite{Froese:1998} for Schr\"odinger 
operators in the Euclidean setting.  

The other consequence we need from Proposition~\ref{tau.factor} is essentially also 
already proven.  To analyze the first term on the right-hand side of (\ref{tau.count}),
we will invoke the Weyl-type asymptotics satisfied by the scattering phase:
\begin{theorem}[Guillop\'e-Zworski]\label{scphase.thm}
As $\xi \to +\infty$,
$$
\sigma(\xi) = \left(\frac{1}{4\pi} \ovol(X,g) - \frac{n_{\rm hp}}2 \right) \>\xi^{2} - \frac{\nc}{\pi} \xi \log \xi + O(\xi),
$$
where $n_{\rm hp}$ denotes the number of $Y_j$'s that are hyperbolic planar.
\end{theorem}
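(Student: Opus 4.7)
The plan is to extend Guillop\'e-Zworski's proof \cite{GZ:1997} of the scattering phase Weyl asymptotic to accommodate both hyperbolic planar and cusp ends. First I would invoke a relative Birman--Krein identity: differentiating (\ref{sigma.def}) yields
\[
2\pi \sigma'(\xi) = -\im\, \partial_s \log \det \bigl[S_P(s)\,S_0(s)^{-1}\bigr]\Big|_{s=\tfrac12+i\xi},
\]
which, by the standard formula relating $\partial_s \log\det S_P(s)$ to the resolvent, can be rewritten as a difference of $0$-regularized traces of $R_P(s)-R_P(1-s)$ and the corresponding background model operator, with the $0$-trace taken as the Hadamard finite-part integral against the boundary defining function $\rho$ of Section~\ref{det.sec}. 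The parametrix (\ref{RP.param}) from Section~\ref{res.sec}, together with the model bounds (\ref{RH.est1})--(\ref{SH.est}), ensures that this renormalized trace is well defined once hyperbolic planar ends are allowed.

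Next I would extract the leading asymptotic in $\xi$ via the short-time expansion of the renormalized heat trace
\[
\otr\bigl(e^{-tP}\bigr) = \frac{\ovol(X,g)}{4\pi t} - \frac{\nc}{2\pi t}\,\log\tfrac{1}{t} + O(1),\qquad t\to 0^+,
\]
where the logarithmic correction is the standard contribution from each hyperbolic cusp (cf.~\cite[\S9.5]{Borthwick}). Laplace-transforming the heat trace converts the $1/t$ pole into the main term $\frac{\ovol(X,g)}{4\pi}\,\xi^{2}$ in $\sigma(\xi)$, while the $t^{-1}\log t^{-1}$ term produces the $-\frac{\nc}{\pi}\xi\log\xi$ correction. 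The order-two polynomial growth of $\tau(s)$ implicit in Proposition~\ref{tau.factor}, combined with the known monotonicity of the scattering phase on the real axis, is enough to promote the integrated asymptotic to the pointwise statement of the theorem.

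The $-n_{\rm hp}/2$ correction originates from the background normalization $S^0_Y(s)$. Each hyperbolic planar end contributes a copy of the explicit scattering determinant $\det S_\bbH(s)$, whose phase I would compute mode by mode from the formula for $[S_\bbH(s)]_k$ in Section~\ref{res.sec} via Stirling's expansion. This yields a background phase of the form $\tfrac12 \xi^{2} + O(\xi)$ per hyperbolic planar end, and subtracting it from the absolute phase of $P$ gives the coefficient $-\tfrac{n_{\rm hp}}{2}$ on the right-hand side.

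The main technical obstacle is controlling the off-diagonal and cutoff error terms produced by (\ref{RP.param}) once hyperbolic planar model pieces replace funnel pieces: one must verify that each such error contributes only $O(\xi)$ to the renormalized trace. The estimates (\ref{RH.est1})--(\ref{SH.est}) on the model resolvent, Poisson kernel, and scattering matrix for $\bbH^2$ were assembled in Section~\ref{res.sec} for precisely this reason, and with them in hand the remainder of the derivation follows the pattern of \cite[Thm.~1.5]{GZ:1997} (with the cusp treatment as in \cite[\S10.6]{Borthwick}).
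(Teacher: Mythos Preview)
Your route is genuinely different from the paper's, and one step in it is wrong.

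The paper does not rederive the scattering-phase asymptotic at all. It simply invokes \cite[Thm.~1.5]{GZ:1997}, which is proved via the \emph{wave} trace (the small-$t$ expansion of the $0$-trace of $\cos t\sqrt{\Delta_g-\tfrac14}$, as in \cite[Lemma~6.2]{GZ:1997}), and then makes two observations to cover the present setting. First, a compactly supported $V$ does not alter the leading wave-trace singularity. Second, for $|t|<\ell$ the diagonal of the wave kernel on the model funnel $F_\ell$ is literally identical to that of $\bbH^2$ (this is \cite[eq.~(6.1)]{GZ:1997}), so inserting hyperbolic planar model pieces in place of funnel pieces changes nothing in the local argument. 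The only global effect is in the renormalized volume of the background: $\ovol(F_\ell)=0$ while $\ovol(\bbH^2)=-2\pi$, and that discrepancy is exactly the $-n_{\rm hp}/2$ term. So the paper's explanation of $n_{\rm hp}$ is a $0$-volume bookkeeping, not a Stirling computation of $\det S_\bbH$.

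Your heat-trace approach could in principle be made to work, but the expansion you wrote down is incorrect at the cusp term. The cusp contribution to the short-time $0$-heat trace is of order $t^{-1/2}\log(1/t)$, not $t^{-1}\log(1/t)$; this is the parabolic term familiar from the Selberg trace formula for finite-area surfaces (see e.g.\ M\"uller \cite{Muller:1992}). A $t^{-1}\log(1/t)$ singularity would, under your own Laplace-transform heuristic, produce a $\xi^2\log\xi$ term in $\sigma(\xi)$, not the $\xi\log\xi$ that actually appears. If you replace that coefficient you will recover the correct secondary term, but the argument then needs the corresponding Tauberian step done carefully; your appeal to ``monotonicity of the scattering phase'' is not available here, since $\sigma$ need not be monotone. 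The wave-trace route in \cite{GZ:1997} avoids both of these issues, which is why the paper can dispose of the theorem in a paragraph.
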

For surfaces with hyperbolic funnel or cusp ends this result was established
in Guillop\'e-Zworski \cite[Thm.~1.5]{GZ:1997}.  As in the other cases discussed above,
the modifications needed to adapt the proof to our slightly more general setting are fairly simple.
The first point is that the addition of a compactly supported potential
$V$ does not change the argument at all, since it does not affect the leading term in the
wave trace asymptotics as derived in \cite[Lemma~6.2]{GZ:1997}.
The second issue is that we allow hyperbolic planar ends in addition to funnels.
However, for $|t| < \ell$ the restriction to the diagonal of the wave kernel on a model funnel
$F_\ell$ is identical to that of $\bbH^2$.  This is the content of \cite[eq.~(6.1)]{GZ:1997}.
So hyperbolic planar ends may also be included without modifying the argument.  
Such ends do affect the final calculation, however, because $\ovol(\bbH^2) = -2\pi$ whereas
the model funnels had $\ovol(F_\ell) = 0$.  This difference accounts for the $n_{\rm hp}$ term.

\section{Scattering determinant asymptotics}\label{scdet.sec}

To state the asymptotic estimate for the scattering determinant contribution to the resonance
counting formula (\ref{tau.count}), we introduce the following constants.
If $Y_j$ is a funnel with parameters $\ell_j, b_j$, then we set
$$
B(Y_j) := \frac{4}{\pi} \int_{0}^{\frac{\pi}2} \int_0^\infty \frac{[I(xe^{i\theta},\ell_j, b_j)]_+}{x^3}\>dx\>d\theta
- \frac{\ell_j}{4},
$$
where $I(\alpha, \ell, r)$ was defined in (\ref{Idef}).
If $Y_j$ is a hyperbolic planar end with parameter $b_j$, then
$$
B(Y_j) := \frac{4}{\pi} \int_{0}^{\frac{\pi}2} \int_0^\infty \frac{[H(xe^{i\theta},b_j)]_+}{x^3}\>dx\>d\theta,
$$
where $H(\alpha, \ell, r)$ was defined in (\ref{Hdef}).
The cusps do not contribute 
to the asymptotics of $\tau(s)$ to leading order, so we make no analogous definition for $C_j$.

\begin{theorem}\label{tau.int.thm}
For $(X,g)$ a surface with hyperbolic ends as in (\ref{X.KYC}), there exists an
unbounded set $\Lambda \subset [1,\infty)$ such that
$$
\frac{2}{\pi} \int_{0}^{\frac{\pi}2} \log |\tau(\tfrac12 + ae^{i\theta})|\>d\theta 
\le \sum_{j=1}^\nf B(Y_j) a^2 + o(a^2)
$$
for all $a \in \Lambda$.  
\end{theorem}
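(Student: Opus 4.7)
The plan is to bound $\log|\tau(\tfrac12 + ae^{i\theta})|$ pointwise for $\theta \in [0, \pi/2]$ by a sum of singular-value contributions, one for each infinite end of $X$, then angular-average and restrict to radii that avoid the poles of $\tau$, following the general scheme of \cite{Stefanov:2006} and \cite{Borthwick:2009}.  Starting from the parametrix of Section \ref{res.sec}, I would represent $S_P(s) S_0(s)^{-1} = I + K(s)$ on $L^2(\diX)$, where $K(s)$ is a trace-class operator admitting an explicit factorization through the model Poisson operators $E^0_{Y_j}(s)$ sandwiched around an interior transfer operator supported near $\partial K$, plus finite-rank corrections from the cusp blocks of (\ref{SP.block}).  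The cusp contributions and the polynomial prefactor $e^{q(s)}$ from Proposition \ref{tau.factor} contribute only $o(a^2)$ and can be absorbed into the error.

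For each funnel or hyperbolic planar end $Y_j$, I would invoke the Poisson-operator estimates developed in Section \ref{fdet.sec} (and from \cite{Borthwick:2009} in the hyperbolic planar case), which rest on the eigenmode asymptotics from Section \ref{fmode.sec}.  After Fourier separation of variables, the $k$-th singular value of $E^0_{Y_j}(s)$ restricted to the core boundary satisfies a bound whose leading term is $(a/2\pi)\, I((k/a) e^{i\theta}, \ell_j, b_j)$, uniformly in $k$ for $s = \tfrac12 + a e^{i\theta}$ (with $H$ in place of $I$ for hyperbolic planar ends), where $I$ is the phase function (\ref{Idef}).  The essential point is that this leading phase exactly matches the integrand in the definition of $A(Y_j)$ and therefore in $B(Y_j)$.

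Applying Weyl's inequality $\log|\det(I+K)| \le \sum_k \log(1+\mu_k(K))$ together with sub-multiplicativity of singular values and polynomial control of the interior factor yields the pointwise inequality
\[
\log|\tau(\tfrac12 + ae^{i\theta})| \le a^2 \sum_{j=1}^\nf F_j(\theta) + o(a^2),
\]
where each $F_j(\theta)$ arises as a Riemann sum (mesh $1/a$) for the angular integrand appearing in $B(Y_j)$.  The compensating $-\ell_j/4$ inside $B(Y_j)$ emerges from the factor $\log|\det S_0(s)^{-1}|$, computed directly from the explicit Gamma-function eigenvalues of $S^0_{Y_j}(s)$ via Stirling.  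The unbounded set $\Lambda$ of good radii is produced by a Cartan minimum-modulus argument, which excludes small neighborhoods of the poles of $\tau$ on the physical side (at $\calR_P$ and at $1-\calR_0$) where the pointwise bound would otherwise fail; the Cartan construction ensures that $\Lambda$ has positive lower density in $[1,\infty)$.

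The hardest step is the singular-value analysis of the second paragraph: the Poisson-operator asymptotics must isolate the phase $I$ (or $H$) with error $o(a)$ uniformly in the Fourier index $k$ across the full range $|k| \lesssim a$, and must remain sharp through the transition between the oscillatory and exponentially-decaying regimes of the radial eigenmodes, so that the Riemann sum in the third paragraph recovers the precise constant rather than merely an upper bound.  Achieving this uniformity is the technical core of the argument and motivates the detailed analysis of Sections \ref{fmode.sec}--\ref{fdet.sec}; once those sharp bounds are in hand, the determinant estimate and the Cartan sequence selection are essentially routine.
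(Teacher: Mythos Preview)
Your proposal is correct and matches the paper's approach: Propositions~\ref{ssiq.prop} and~\ref{tau.gj.prop} give exactly the factorization $\tau = \det(I+Q)$ through model Poisson operators that you describe, and Proposition~\ref{detG.prop} supplies the sharp singular-value bound for each funnel $G_j$, with the $-\ell_j/4$ arising from the $[\re\phi_0]_+$ term in Lemma~\ref{lk.lemma} (which does trace back to $S_0^{-1}$, as you say, via the identity $E^0(s)S^0(s)^{-1}=-E^0(1-s)$). Two small corrections worth noting: the factor $e^{q(s)}$ plays no role in the main pointwise bound (the determinant representation bypasses it entirely, and in any case $\log|e^{q(s)}|$ is $O(a^2)$, not $o(a^2)$), and the pointwise estimate actually fails on an arc of length $O(a^{-2})$ near $\theta=\pi/2$ where the interior resolvent norm is uncontrolled---the paper fills that gap separately via the Minimum Modulus Theorem and the crude bound (\ref{tau.order.est}), and it is there (not in the construction of $\Lambda$, which is just a density argument from the $O(t^2)$ counting) that Proposition~\ref{tau.factor} is actually invoked.
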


Before undertaking the proof of Theorem~\ref{tau.int.thm}, we will show how this theorem leads to the
proof of the main result stated in \S\ref{intro.sec}:
\begin{proof}[Proof of Theorem~\ref{main.thm}]
Starting from the counting formula from Corollary~\ref{relcount}, we apply Theorem~\ref{scphase.thm}
to the scattering phase term and Theorem~\ref{tau.int.thm} to the scattering determinant contribution.
This yields
\begin{equation}\label{np.n0}
\tN_P(a) \le \tN_0(a) + \frac{1}{2\pi} \ovol(X,g) a^2 + \sum_{j=1}^\nf B(Y_j) a^2 + o(a^2),
\end{equation}
as $a \to \infty$.   From the explicit definition (\ref{R0.def}) of $\calR_0$, we see that
$$
\frac{N_0(t)}{t^2} \sim \sum_{j=1}^\nf \begin{cases} 1 & \text{for a hyperbolic planar end,} \\
\frac{\ell_j}4 & \text{for a funnel end,}  \end{cases}
$$
and so $\tN_0(a)$ satisfies the same asymptotic.  Also, we have
$$
\ovol(X,g) = \vol(X_\rc, g) + \sum_{j=1}^{\nf} \ovol(Y_j, g).
$$
The $0$-volumes of the $Y_j$'s are easily computed.  For a hyperbolic planar end,
$$
\ovol(Y_j, g) = 2\pi \FPe \int_{b_j}^{\log(2/\vep)} \sinh r\>dr = -2\pi \cosh b_j,
$$
and for a funnel end,
$$
\ovol(Y_j, g) = \ell_j \FPe \int_{b_j}^{\log(2/\vep)} \cosh r\>dr = -\ell_j \sinh b_j.
$$
By the formulas (\ref{A.hp}) and (\ref{A.f}) for $A(Y_j)$, we then see that (\ref{np.n0})
is equivalent to the claimed estimate.
\end{proof}

The derivation of Theorem~\ref{hsurf.thm} from Theorem~\ref{main.thm} was already
explained in \S\ref{intro.sec}.  To prove Corollary~\ref{lb.cor} we simply recall a few details of
the proof of the lower bound in Guillop\'e-Zworski \cite[Thm.~1.3]{GZ:1997}.  For a test function
$\phi\in \cinf_0(\bbR_+)$ with $\phi \ge 0$ and $\phi(1)>0$, we have estimates
$$
|\hat\phi(\xi)| \le C_k(1+|\xi|)^{-k-2},
$$
for $k\in\bbN$ and $\im \xi \le 0$.  Pairing the distributional Poisson formula \cite[Thm.~5.7]{GZ:1997}
with $\lambda \phi(\lambda\, \cdot)$ yields
$$
|\ovol(X,g)|\>\lambda^2 \le C_k \int_0^\infty (1+r)^{-k-3} N_P(\lambda r)\>dr.
$$
If we have $N_P(t) \le At^2$ for $t \ge 1$, then splitting the integral at $a$ gives
$$
|\ovol(X,g)|\>\lambda^2 \le C_k \Bigl[N(\lambda a) + A\lambda^2 a^{-k}\Bigr].
$$
Setting $t = \lambda a$, we have
$$
N(t) \ge \Bigl( c_k\> |\ovol(X,g)|\> a^{-2} - A a^{-2-k} \Bigr) \>t^2,
$$
and optimizing with respect to $a$ then yields
$$
N(t)  \ge c_k\> |\ovol(X,g)|^{1 + k/2} A^{-k/2}.
$$
Corollary~\ref{lb.cor} is then proven by substituting the constant obtained in Theorem~\ref{hsurf.thm}
for $A$.

\bigbreak
The rest of this section is devoted to the proof of Theorem~\ref{tau.int.thm}.
To produce a formula convenient for estimation, we introduce cutoff functions as follows.
Fix some $\eta\in (0,1)$.   For $j = 1,\dots, \nf+\nc$ and $k=1,2$, 
we define $\chi^j_k \in \cinf(X)$ so that $\chi^j_k=1$ outside the $j$-th end ($Y_j$ or $C_j$),
and inside the $j$-th end we have
\begin{equation}\label{chidef}
\chi^j_k = \begin{cases}
0 & \text{for }r \ge b_j + (k+1)\eta, \\
1 & \text{for }r \le b_j + k\eta.
\end{cases}
\end{equation}
\begin{figure} 
\psfrag{1}{$1$}
\psfrag{r}{$r$}
\psfrag{e}{$\eta$}
\psfrag{b}{$b_j$}
\psfrag{ch1}{$\chi^j_1$}
\psfrag{ch2}{$\chi^j_2$}
\begin{center}  
\includegraphics{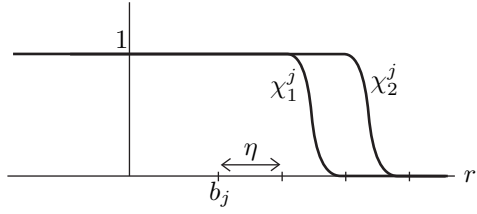} 
\end{center}
\caption{The cutoff functions $\chi^j_k$ in the $j$-th end.}\label{chijk}
\end{figure}
\begin{proposition}\label{ssiq.prop}
With cutoffs defined as in (\ref{chidef}), we have
$$
S_X(s) S_0(s)^{-1} = I + Q(s),
$$
where the components of $Q(s)$, in terms of the block decomposition introduced 
in (\ref{SP.block}), are
$$
Q^\rff_{ij}(s) = (2s-1) E^0_{Y_i}(s)^t [\Delta_{Y_i}, \chi^i_2] R_P(s)  [\Delta_{Y_j}, \chi^j_1] 
E^0_{Y_j}(1-s),
$$
$$
Q^\rcf_{ij}(s) = (2s-1) E^0_{C_i}(s)^t [\Delta_{C_i}, \chi^i_2] R_P(s)  [\Delta_{Y_j}, \chi^j_1] 
E^0_{Y_j}(1-s),
$$
$$
Q^\rfc_{ij}(s) = -(2s-1) E^0_{Y_i}(s)^t [\Delta_{Y_i}, \chi^i_2] R_P(s)  [\Delta_{C_j}, \chi^j_1] 
E^0_{C_j}(s),
$$
$$
Q^\rcf_{ij}(s) = -(2s-1) E^0_{C_i}(s)^t [\Delta_{C_i}, \chi^i_2] R_P(s)  [\Delta_{C_j}, \chi^j_1] 
E^0_{C_j}(s),
$$
\end{proposition}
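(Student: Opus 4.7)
The proposition is a Maass--Selberg-type identity, and the natural approach is to combine two parametrix representations of the full Poisson operator $E_P(s)$ with the standard identification of the leading boundary behavior of $R_P(s)$ in terms of $E_P(s)$. The first ingredient is the formula
$$E_P(s) h = (1-\chi_1) E^0(s) h + R_P(s) [\Delta_g, \chi_1] E^0(s) h,$$
where $E^0(s)$ denotes the block diagonal direct sum $\bigoplus_j E^0_{Y_j}(s) \oplus \bigoplus_j E^0_{C_j}(s)$. This identity is checked by applying $P - s(1-s)$: the potential $V$ drops out because $\supp V \subset K$ while $1-\chi_1 \equiv 0$ on $K$, and the commutator produced by the first right-hand term is precisely cancelled by the resolvent term; the $\rho^{1-s}$-leading behavior at each $\diX_i$ is correct because $1-\chi^i_1 = 1$ near infinity while $R_P(s)$ applied to a compactly supported function contributes only at order $\rho^s$. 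Reading off the $\rho^s$-coefficient at $\diX_i$ then identifies $(S_P(s) - S_0(s))h|_{\diX_i}$ as the $\rho^s$-coefficient at end $i$ of $R_P(s)[\Delta_g, \chi_1] E^0(s) h$.

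The second step is to convert this into the stated form by using the standard identity relating the $\rho^s$-behavior of $R_P(s)$ at infinity to $E_P(s)^t$ (with the appropriate $(2s-1)$ normalization), and then substituting the transpose of the analogous $\chi_2$-parametrix,
$$E_P(s)^t = E^0(s)^t (1-\chi_2) - E^0(s)^t [\Delta_g, \chi_2] R_P(s),$$
which comes from taking the transpose of the $\chi_2$-parametrix for $E_P(s)$ together with the self-adjointness of $R_P(s)$ and the anti-symmetry $[\Delta_g, \chi_2]^t = -[\Delta_g, \chi_2]$. The contribution from the $E^0(s)^t(1-\chi_2)[\Delta_g, \chi^j_1] E^0(s) h$ piece vanishes by support-disjointness: on the support of $[\Delta_g, \chi^j_1]$, which lies in $b_j + \eta \le r \le b_j + 2\eta$ inside end $j$, one has $1 - \chi^j_2 = 0$, while for $i \ne j$ the projection $E^0_{Y_i}(s)^t$ or $E^0_{C_i}(s)^t$ annihilates anything supported in end $j$. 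Only the cross term survives, producing the bilinear combination $E^0_{\cdot,i}(s)^t [\Delta_g, \chi^i_2] R_P(s) [\Delta_g, \chi^j_1] E^0_{\cdot,j}(s)$ on each of the four blocks.

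Finally, to pass from $S_P(s) - S_0(s)$ to $Q(s) = S_P(s) S_0(s)^{-1} - I$ I would compose on the right with $S_0(s)^{-1} = S_0(1-s)$ and invoke the model Poisson identity $E^0_{Y_j}(s) S^0_{Y_j}(s)^{-1} = E^0_{Y_j}(1-s)$ to replace the rightmost $E^0_{Y_j}(s)$ by $E^0_{Y_j}(1-s)$ on the funnel-type ends. On cusp ends the model scattering matrix is the identity, so $E^0_{C_j}(s)$ is retained; this asymmetry also accounts for the sign difference between the funnel-right and cusp-right blocks in the stated formulas. The main obstacle I anticipate is the bookkeeping of the $(2s-1)$ normalization and the overall signs as they propagate across the four blocks in the paper's conventions. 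The conceptual core of the argument is the support-disjointness step in the second paragraph, for which the two-layer cutoff structure with $\chi_1 \prec \chi_2$ has been specifically designed.
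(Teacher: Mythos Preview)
Your approach is correct and shares the same architecture as the paper's: build the generalized eigenfunction from a $\chi_1$-parametrix, then extract its $\rho^s$-behavior in each end using the $\chi_2$-layer and the support-disjointness you emphasize. The organization differs slightly. You route the second step through the full Poisson operator, invoking the identity that the $\rho^s$-coefficient of $R_P(s)$ is $E_P(s)^t$ and then substituting the transpose of the $\chi_2$-parametrix for $E_P$. The paper instead stays with the model objects: in end $i$ it writes $(1-\chi^i_2)u' = -R^0_{Y_i}(s)[\Delta_{Y_i},\chi^i_2]u'$ and reads off the asymptotic directly from the model resolvent as $-\rho^s E^0_{Y_i}(s)^t[\ldots]$. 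This avoids introducing the full $E_P$ at all and is one step shorter, but the content is the same.

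One small correction on the bookkeeping you flagged: in the paper's conventions the model Poisson identity is $E^0_{Y_j}(s) S^0_{Y_j}(s)^{-1} = -E^0_{Y_j}(1-s)$, with a minus sign. This minus is what converts the $-(2s-1)$ in $S^{\rff}_{ij}$ into the $+(2s-1)$ in $Q^{\rff}_{ij}$, while on cusp-right blocks (where $S_0$ acts as the identity) the $-(2s-1)$ persists. That is the precise mechanism behind the sign asymmetry, rather than merely ``$E^0_{C_j}(s)$ being retained.''
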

\begin{proof}
We first note that one characterize the scattering matrix $S_X(s)$ through the boundary behavior of
solutions of $(\Delta_g - s(1-s))u = 0$.  For $\psi \in \cinf(\diX)$ and $\re s \ge \tfrac12$, $s \ne \bbN/2$, 
there is a unique generalized eigenfunction $u \in \cinf(X)$ with the asymptotic behavior
\begin{equation}\label{u.asymp}
u \sim \rho_\rf^{1-s} \rho_\rc^{-s} \psi + \rho_\rf^{s} \rho_\rc^{s-1} S_X(s) \psi.
\end{equation}
For hyperbolic surfaces with cusps, a proof is given in Borthwick \cite[Prop.~7.13]{Borthwick}.
The essential analysis takes place in the ends, so including smooth metric or potential perturbations 
within $K_0$ requires only trivial modifications to the proof.  Likewise, hyperbolic planar ends
may be included without much change to the argument.

Suppose $f_j \in \cinf(\diY_j)$.  Then we can use the model Poisson kernel $E^0_{Y_j}(s)$ to 
create a partial solution $(1-\chi^j_1) E^0_{Y_j}(s) f_j$ supported in $Y_j$.  As $\rho\to 0$ in $Y_j$
this function has the asymptotic behavior
\begin{equation}\label{yj.asymp}
(1-\chi^j_1) E^0_{Y_j}(s) f_j \sim \frac{1}{2s-1} \Bigl[ \rho_\rf^{1-s}f_j + \rho_\rf^{s} S^0_{Y_j}(s)f_j \Bigr].
\end{equation}
To create a full solution will take the ansatz
$$
u = (1-\chi^j_1) E^0_{Y_j}(s) f_j + u'
$$
and then solve $(\Delta_g - s(1-s))u = 0$ for $u'$ by applying the resolvent.
The result is
$$
u' = R_P(s) [\Delta_{Y_j}, \chi^j_1] E^0_{Y_j}(s) f_j.
$$
In the end $Y_i$, we can use the fact that $(1 - \chi^i_2)  [\Delta_{Y_j}, \chi^j_1] = 0$ to deduce
$$
(\Delta_{Y_i} - s(1-s))(1 - \chi^i_2) u' = - [\Delta_{Y_i}, \chi^i_2] u',
$$
and hence that
$$
(1 - \chi^i_2) u' = - R^0_{Y_i}(s) [\Delta_{Y_i}, \chi^i_2]R_P(s) [\Delta_{Y_j}, \chi^j_1] E^0_{Y_j}(s) f_j.
$$
This gives the asymptotic behavior in $Y_i$:
\begin{equation}\label{yi.asymp}
u' \sim - \rho_\rf^{s} E^0_{Y_i}(s)^t [\Delta_{Y_i}, \chi^i_2]R_P(s) [\Delta_{Y_j}, \chi^j_1] E^0_{Y_j}(s) f_j.
\end{equation}
By comparing the asymptotics (\ref{yj.asymp}) and (\ref{yi.asymp}) to the general form (\ref{u.asymp}),
we see that
$$
S^\rff_{ij}(s) = \delta_{ij} S^0_{Y_j}(s) - (2s-1)
E^0_{Y_i}(s)^t [\Delta_{Y_i}, \chi^i_2]R_P(s) [\Delta_{Y_j}, \chi^j_1] E^0_{Y_j}(s)
$$
We then obtain $Q^\rff_{ij}(s)$ by noting that
$$
E^0_{Y_j}(s) S^0_{Y_j}(s)^{-1} =  - E^0_{Y_j}(1-s).
$$

To find $Q^\rcf_{ij}(s)$ we use the same setup starting from $f_j \in \cinf(\diY_j)$, but
then analyze $u'$ by restricting to the the cusp end $C_i$.  This yields
$$
(1 - \chi^i_2) u' = - R^0_{C_i}(s) [\Delta_{C_i}, \chi^i_2]R_P(s) [\Delta_{Y_j}, \chi^j_1] E^0_{Y_j}(s) f_j.
$$
The asymptotic behavior in $C_i$ is given by 
$$
(1 - \chi^i_2) u' \sim - \rho^{s-1} E^0_{C_i}(s)^t [\Delta_{C_i}, \chi^i_2]R_P(s) 
[\Delta_{Y_j}, \chi^j_1] E^0_{Y_j}(s) f_j,
$$
so that
$$
S^\rcf_{ij}(s) = -(2s-1) E^0_{C_i}(s)^t [\Delta_{C_i}, \chi^i_2]R_P(s) [\Delta_{Y_j}, \chi^j_1] E^0_{Y_j}(s).
$$

Next take $a_j \in \cinf(\diC_j) = \bbC$.  
Since $E^0_{C_j}(s; r) = \rho_\rc^{-s}/(2s-1)$, our ansatz for a generalized eigenfunction 
satisfying (\ref{u.asymp}) starts from
$$
(1 - \chi^j_1) E^0_{C_j}(s) a_j \sim \frac{1}{2s-1} \rho_\rc^{-s} a_j.
$$
The corresponding generalized eigenfunction is
$$
u = (1 - \chi^j_1) E^0_{C_j}(s) a_j + u',
$$
where
$$
u' = R_P(s) [\Delta_{C_\infty}, \chi^j_1] E^0_{C_j}(s) a_j.
$$
arguing as above, we find that
$$
u' \sim - \rho_\rf^s E^0_{Y_i}(s)^t [\Delta_{Y_i}, \chi^i_2]R_P(s)
[\Delta_{C_\infty}, \chi^j_1] E^0_{C_j}(s) a_j
$$
in the funnel $Y_i$, and
$$
u' \sim - \rho_\rc^{1-s} E^0_{C_i}(s)^t [\Delta_{C_i}, \chi^i_2]R_P(s)
[\Delta_{C_j}, \chi^j_1] E^0_{C_j}(s) a_j
$$
in the cusp $C_i$.  We can then read off the matrix elements, $S^\rfc_{ij}(s)$ and
$S^\rcc_{ij}(s)$, as above.
\end{proof}

In conjunction with the cutoffs defined in (\ref{chidef}), we introduce projections $\chr^j_k$ 
on $L^2(X,dg)$, where
\begin{equation}\label{chr.def}
\chr^j_k f = \begin{cases}
f& \text{ for }r \in [b_j + k\eta, b_j + (k+1)\eta]\text{ in end }j.\\
0 & \text{else}.
\end{cases}
\end{equation}
As with the cutoffs, these projections depend on $b_j$ and also on the choice of
$\eta>0$.  We then introduce operators on
$L^2(X,dg)$ given by
\begin{equation}\label{Gj.funnel}
G_j(s) := (2s-1) \chr^j_1 E^0_{Y_j}(1-s) E^0_{Y_j}(s)^t \>\chr^j_2,
\end{equation}
for $j = 1, \dots, \nf$, and
\begin{equation}\label{Gj.cusp}
G_j(s) := - (2s-1) \chr^j_1 E^0_{C_j}(s) E^0_{C_j}(s)^t \>\chr^j_2,
\end{equation}
for $j = \nf+1, \dots, \nf+\nc$.
\begin{proposition}\label{tau.gj.prop}
The relative scattering phase is bounded by
$$
\log |\tau(s)| \le \sum_{j=1}^{\nf+\nc} \log \det\Bigl(I + C(\eta, \vep)\> |G_j(s)|\Bigr),
$$
for $\re s \ge \tfrac12$ with $\dist(s(1-s), \sigma(P)) \ge \vep$.
\end{proposition}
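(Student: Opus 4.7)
The plan is to rewrite $\tau(s)=\det(I+Q(s))$ via the cyclic property of Fredholm determinants so that the Poisson sandwiches $G_j(s)$ emerge on the diagonal, and then split the resulting determinant into a product over ends by exploiting their disjoint supports.

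First I would factor the $Q(s)$ of Proposition~\ref{ssiq.prop} as $Q(s)=\mathcal{A}(s)R_P(s)\mathcal{B}(s)$, where $\mathcal{A}(s):L^2(X)\to L^2(\diX)$ is a column whose $i$-th block is $(2s-1)E^0_i(s)^t[\Delta,\chi^i_2]$ and $\mathcal{B}(s):L^2(\diX)\to L^2(X)$ is a row whose $j$-th block is $\epsilon_j[\Delta,\chi^j_1]E^0_j(\star)$, with $(\epsilon_j,\star)=(+1,1-s)$ for a funnel or hyperbolic planar end and $(-1,s)$ for a cusp (here $E^0_j$ denotes the model Poisson operator for the $j$-th end). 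Applying the cyclic identity for Fredholm determinants,
\[
\tau(s)=\det\bigl(I+R_P(s)\,\mathcal{B}(s)\mathcal{A}(s)\bigr),
\]
where the row-times-column product collapses to the single sum $\mathcal{B}(s)\mathcal{A}(s)=\sum_j\mathcal{B}_j(s)\mathcal{A}_j(s)$. Since the commutators $[\Delta,\chi^j_k]$ are supported in the annuli marked by $\chr^j_k$, inserting these projections yields
\[
\mathcal{B}_j(s)\mathcal{A}_j(s)=[\Delta,\chi^j_1]\,G_j(s)\,[\Delta,\chi^j_2],
\]
matching exactly definitions (\ref{Gj.funnel}) and (\ref{Gj.cusp}).

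Next I would estimate the resulting determinant. The decisive structural observation is that the operators $M_j(s):=[\Delta,\chi^j_1]G_j(s)[\Delta,\chi^j_2]$ have mutually disjoint supports (the $\chr^j_k$ belong to distinct ends), so $\sum_j M_j(s)$ is block diagonal with respect to $L^2(X)=L^2(K)\oplus\bigoplus_j L^2(\text{end}_j)$ and its singular values form the sorted multiset union of the $\mu_k(M_j(s))$. Consequently $\det(I+|\sum_jM_j(s)|)=\prod_j\det(I+|M_j(s)|)$. Combining the standard bound $|\det(I+K)|\le\det(I+|K|)$ with the ideal inequality $\mu_k(R_P(s)K)\le\|R_P(s)\|_{\text{cutoff}}\,\mu_k(K)$, and noting that the cutoff resolvent norms $\|\chr^j_1 R_P(s)\chr^i_2\|$ are bounded by $C(\eta,\vep)$ uniformly in $s$ (a Guillop\'e-Zworski type estimate analogous to (\ref{RH.est1})--(\ref{RH.est2}), using $\dist(s(1-s),\sigma(P))\ge\vep$), one obtains
\[
|\tau(s)|\le\prod_j\det\bigl(I+C(\eta,\vep)\,|G_j(s)|\bigr),
\]
which is the proposition after taking logarithms.

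The main obstacle will be making the singular-value comparison $\mu_k(M_j)\le c(\eta)\,\mu_k(G_j)$ precise, since the commutators $[\Delta,\chi^j_k]$ are first-order differential operators and are unbounded on $L^2$. The remedy uses that $G_j(s)$ has smooth kernel (as a composition of the smoothing model Poisson operators) supported in the compact annular region of end $j$; the derivatives in the commutators can therefore be transferred onto $G_j$'s kernel by integration by parts, producing a bounded operator that inherits the singular-value decay of $G_j$ up to a constant depending only on $\eta$ and the local geometry. With these two ingredients---the disjointness of the ends and the smoothing of $G_j$---the argument closes.
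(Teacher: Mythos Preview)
Your overall strategy (factor $Q$, use cyclicity, then exploit the block structure over ends) matches the paper's, but your choice of factorization creates the very obstacle you flag at the end, and that obstacle is genuine rather than merely technical.

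The paper factors $Q(s) = Q_1\,Q_2\,Q_3$ with
\[
Q_2 \;=\; \sum_{i,j} [\Delta_g,\chi^i_2]\,R_P(s)\,[\Delta_g,\chi^j_1],
\]
so that \emph{both} commutators are grouped with the resolvent in the middle factor. The point is that $Q_2$ is bounded on $L^2(X)$ with norm $\le C(\eta,\vep)$: the commutators are first-order with compact support, and $R_P(s)$ gains two derivatives (spectral theorem plus elliptic regularity on the compactly supported pieces). After cyclicity, $\tau(s)=\det(I+Q_2\,Q_3Q_1)$, and now $Q_3Q_1$ is \emph{exactly} $\bigoplus_j G_j(s)$ with no commutators attached. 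Weyl's inequality then gives the bound in one line.

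Your factorization pulls out only $R_P(s)$, leaving $[\Delta,\chi^j_1]$ and $[\Delta,\chi^j_2]$ flanking $G_j$. The resulting comparison $\mu_k(M_j)\le c(\eta)\,\mu_k(G_j)$ does \emph{not} hold with a constant independent of $s$: in the funnel case the Poisson coefficients satisfy $|\partial_r a_k(s;r)|\sim |k\alpha|\,|a_k(s;r)|$ on the annuli (this is visible from the asymptotics $a_k\sim e^{k\phi}$ of \S\ref{fmode.sec}, where $\partial_r\phi=\sqrt{f}$), so the commutators inflate the singular values by factors of order $|s|$. Your ``integration by parts'' fix transfers the derivatives onto $G_j$'s kernel but does not eliminate this growth. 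The remedy is simply to adopt the paper's grouping: put the commutators into the bounded middle operator, and the obstacle disappears entirely.
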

\begin{proof}
In the formula for the relative scattering matrix given in Proposition~\ref{ssiq.prop},
we can write $Q(s)$ as the composition of three operators,
$$
Q(s): L^2(\diX) \stackrel{Q_3}\longrightarrow L^2(X,dg)  \stackrel{Q_2}\longrightarrow L^2(X, dg) 
\stackrel{Q_1}\longrightarrow L^2(\diX),
$$
where
$$
Q_1 :=  \sum_{j=1}^\nf E^0_{Y_j}(s)^t \chr^j_2 +
\sum_{j = \nf+1}^{\nf+\nc} E^0_{C_j}(s)^t \chr^j_2,
$$
$$
Q_2 := \sum_{i,j = 1}^{\nf+\nc}  [\Delta_{g}, \chi^i_2] R_P(s) [\Delta_{g}, \chi^j_1],
$$
$$
Q_3\big|_{L^2(\diY_j)} := \chr^j_1 E^0_{Y_j}(1-s),\qquad 
Q_3\big|_{L^2(\diC_j)} := \chr^j_1 E^0_{C_j}(s).
$$

By the cyclicity of the trace,
$$
\tau(s) = \det (I + Q(s)) = \det(I + Q_2\circ Q_3\circ Q_1).
$$
Under the assumptions $\re s \ge \tfrac12$ with $\dist(s(1-s), \sigma(P)) \ge \vep$, we can apply the
spectral theorem and standard elliptic estimates to prove that
$$
\norm{Q_2} \le C(\eta, \vep).
$$
By the Weyl estimate this then gives
$$
|\tau(s)| \le \prod_{j=1}^\infty \bigl(1 + C(\eta, \vep) \mu_j(Q_3\circ Q_1)\bigr) = \det (1 + C(\eta, \vep) \>|Q_3\circ Q_1|\bigr)
$$
The result follows immediately from
$$
Q_3 \circ Q_1 = G_1 \oplus \dots \oplus G_{\nf+\nc},
$$
where the $G_j(s)$ are given by (\ref{Gj.funnel}) and (\ref{Gj.cusp}).
\end{proof}

Note that the right-hand side of the estimate from Proposition~\ref{tau.gj.prop} is always positive.  
It is therefore impossible to obtain a sharp estimate by this approach in cases where the leading 
asymptotic behavior of $\log |\tau(s)|$ is negative.  The extended funnel, whose resonance asymptotics 
are studied in \S\ref{exfun.sec}, gives an example of this situation.

\bigbreak
\begin{proof}[Proof of Theorem~\ref{tau.int.thm}]
Let $\calR_0$ be the background resonance set as defined in (\ref{R0.def}).
To avoid poles, we will restrict our attention to radii in the set
$$
\Lambda := \left\{ a \ge 1 :\> \dist\Bigl(\bigl\{|s-\tfrac12| = a\bigr\}, \calR_0\cup \calR_P \Bigr) \ge a^{-3} \right\}.
$$
Since $N_0(t)$ and $N_P(t)$ are $O(t^2)$, the density of $\Lambda$ in $[1,r)$ approaches
1 as $r \to \infty$.  

If we assume that $0 \le \theta \le \tfrac{\pi}2 - \vep a^{-2}$, then $s = \tfrac12 + ae^{i\theta}$ will
satisfy the hypothesis that $\dist(s(1-s), \sigma(P)) \ge \vep$ for Proposition~\ref{tau.gj.prop}.
We also assume $a \in \Lambda$ throughout this argument.
If $Y_j$ is a funnel end, then Proposition~\ref{detG.prop} gives 
\begin{equation}\label{detG.fun}
\log \det\Bigl( I + C(\eta, \vep)\>\bigl|G_j(\tfrac12 + ae^{i\theta})\bigr|\Bigr) 
\le \kappa_j(\theta, b_j+4\eta) + C(\eta,\vep,b_j)\>a \log a,
\end{equation}
where 
$$
\kappa_j(\theta, r) :=  2 \int_0^{\infty} \frac{[I( xe^{i\theta}, \ell_j, r)]_+}{x^3}\>dx
- \frac12 \ell_j \sin^2\theta, 
$$
If $Y_j$ is hyperbolic planar, the corresponding estimate follows from
Borthwick \cite[Prop.~5.4]{Borthwick:2009}, with 
$$
\kappa_j(\theta, r) :=  2 \int_0^{\infty} \frac{[H(xe^{i\theta}, r)]_+}{x^3}\>dx, 
$$
(A slight modification of the original proof is required, replacing the assumption $a \in \bbN$ with an estimate
based on $\dist(\frac12 - ae^{i\theta}, -\bbN)$.)

For a cusp end $C_j$, it is easy to estimate directly since
$$
E_{C_j}^0(s) = \frac{e^{sr}}{2s-1},
$$
which gives
$$
G_j(s; r,\theta, r',\theta') =  - \frac{1}{2s-1} \chr_{j,1}(r) e^{s(r+r')} \chr_{j,2}(r'). 
$$
This operator has rank one, so that
$$
\det\bigl( I + c\>|G_j(s)|\bigr) = 1 + \mu_1(G_j(s)),
$$
where the sole singular value is given by
$$
\mu_1(G_j(s)) = \frac{1}{|2s-1|} \left[ \int_{b_j+\eta}^{b_j+2\eta} e^{2r\re s}\>e^{-r} \>dr \right]^{\frac12}
\left[ \int_{b_j+2\eta}^{b_j+3\eta} e^{2r\re s}\>e^{-r} \>dr \right]^{\frac12}.
$$
Hence we have 
$$
 \det\Bigl( I + c\>\bigl|G_j(\tfrac12 + ae^{i\theta})\bigr|\Bigr) \le 1 + \frac{c}{2a} e^{2a (b_j+3\eta)}.
$$
For $a$ sufficiently large,
\begin{equation}\label{detG.cusp}
\log \det\Bigl( I + C(\eta, \vep)\>\bigl|G_j(\tfrac12 + ae^{i\theta})\bigr|\Bigr) \le C(\eta, \vep, b_j) a,
\end{equation}
for all $|\theta|\le \tfrac{\pi}2$.

From (\ref{detG.fun}) and (\ref{detG.cusp}) we conclude that
\begin{equation}\label{tau.fj}
\frac{\log |\tau(\tfrac12 + ae^{i\theta})|}{a^2} \le \sum_{j=1}^\nf \kappa_j(\theta, b_j+4\eta) 
+ C(\eta,\vep,b_j)\>a^{-1} \log a,
\end{equation}
for $a \in \Lambda$ and $0 \le \theta \le \tfrac{\pi}2 - \vep a^{-2}$.
Since the $\kappa_j(\theta,r)$ are uniformly continuous on $[0, \tfrac{\pi}2] \times [b_j,b_j+1]$, 
we can take $\eta \to 0$ in (\ref{tau.fj}), to obtain
\begin{equation}\label{inttau}
\frac{\log |\tau(\tfrac12 + a e^{i\theta})| }{a^2} \le \sum_{j=1}^\nf \kappa_j(\theta, b_j) +o(a^2),
\end{equation}
uniformly for $0 \le \theta \le \tfrac{\pi}2 - \vep a^{-2}$.

By integrating the estimate (\ref{inttau}) over $\theta$, we obtain
$$
\frac{2}{\pi} \int_{0}^{\frac{\pi}2 - \vep a^{-2}} \log |\tau(\tfrac12 + a e^{i\theta})| \>d\theta
\le \sum_{j=1}^\nf B(Y_j)a^2 + o(a^2).
$$
It remains to fill in the small gap where $|\theta|$ is close to $\tfrac{\pi}2$.
The factorization given by Proposition~\ref{tau.factor}, 
together with the Minimum Modulus Theorem \cite[Thm.~3.7.4]{Boas}, 
implies that for any $\eta>0$, 
\begin{equation}\label{tau.order.est}
|\tau(\tfrac12 + a e^{i\theta})| \le C_\eta \exp(a^{2+\eta}),
\end{equation}
provided $a \in J$.  (This was the reason that $\calR_P$ was included in the definition of $J$.)
Thus, 
$$
\frac{2}{\pi}\int_{\frac{\pi}2 - \vep a^{-2}}^{\frac{\pi}2}  \log |\tau(\tfrac12+ ae^{i\theta})|\>d\theta
= O(a^\eta \vep),
$$
and so this term can be absorbed into the $o(a^2)$ error.
\end{proof}

\bigbreak
To conclude this section, we'll derive some uniform upper and lower bounds on the growth of $\tau(s)$
for $s \in \bbC$, refining the estimates that one could obtain directly from Proposition~\ref{tau.factor}.
These will prove useful in \S\ref{trfun.sec} and \S\ref{exfun.sec}, in particular.
\begin{lemma}\label{lind.lemma}
Let $\mathcal{Q}$ denote the joint set of zeros and poles of $\tau(\frac12+z)$
and $\tau(\frac12 - iz)$.  Assuming $|z| \ge 1$ and $\dist(z, \mathcal{Q}) > |z|^{-\beta}$
with $\beta>2$, we have
$$
-c(\beta) |z|^2 \le \log |\tau(\tfrac12+z)| \le C(\beta) |z|^2.
$$
\end{lemma}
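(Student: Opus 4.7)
The plan is to exploit the factorization of Proposition~\ref{tau.factor}. Writing $s = \tfrac12 + z$, we have
\begin{equation*}
\log|\tau(\tfrac12+z)| = \re q(\tfrac12+z) + \log\left|\frac{H_P(\tfrac12 - z)}{H_P(\tfrac12+z)}\right| + \log\left|\frac{H_0(\tfrac12+z)}{H_0(\tfrac12-z)}\right|.
\end{equation*}
The polynomial term is $O(|z|^2)$ since $\deg q \le 2$, so it suffices to bound $\bigl|\log|F_*(z)|\bigr| \le C(\beta)|z|^2$ for each Hadamard ratio $F_*(z) := H_*(\tfrac12-z)/H_*(\tfrac12+z)$, with $* = P$ or $* = 0$.

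The key observation is that, although each $H_*$ individually has the canonical upper bound $\log|H_*(w)| = O(|w|^2\log|w|)$ for a genus-$2$ product with $n_*(t) = O(t^2)$, the logarithmic factor cancels in the ratio. Expanding $F_*$ directly from the Weierstrass form and setting $\eta_n = \zeta_n - \tfrac12$, the quadratic exponents combine to leave a formal sum whose summand is $O(|z/\eta_n|^3)$ for $|\eta_n| \gg |z|$. I would split this sum at $|\eta_n| = 2|z|$. For $|\eta_n| > 2|z|$, the convergent Taylor remainder together with $n_*(t) = O(t^2)$ (Theorem~\ref{NP.bound} for $*=P$, direct from (\ref{R0.def}) for $*=0$) yields $O(|z|^2)$ after integration by parts. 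For $|\eta_n| \le 2|z|$, the rational pieces sum to $O(|z|^2)$ by a similar computation, and the logarithmic terms $\log|(\eta_n \pm z)/\eta_n|$ are controlled using $\dist(z,\mathcal{Q}) \ge |z|^{-\beta}$ to keep $|\eta_n \pm z|$ bounded below.

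For the lower bound I would invoke the functional equation $\tau(s)\tau(1-s) = e^{q(s)+q(1-s)}$ that is implicit in Proposition~\ref{tau.factor}, yielding $\log|\tau(\tfrac12+z)| + \log|\tau(\tfrac12-z)| = O(|z|^2)$. Thus an upper bound on $\log|\tau(\tfrac12+\cdot)|$ at $-z$ converts immediately to a lower bound at $z$. Alternatively, the symmetry $F_*(-z) = 1/F_*(z)$ gives the same relation at the level of the Hadamard ratios. The inclusion in $\mathcal{Q}$ of the zeros and poles of both $\tau(\tfrac12+z)$ and $\tau(\tfrac12-iz)$ ensures that the avoidance condition is preserved under the sign flip $z\mapsto -z$ and the quarter rotation $z \mapsto -iz$ needed for the applications in \S\ref{trfun.sec} and \S\ref{exfun.sec}.

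The hard part will be handling the near-sum cleanly: since $n_*(2|z|) = O(|z|^2)$, there are potentially $O(|z|^2)$ terms with $|\eta_n| \le 2|z|$, and the logarithmic contributions $\log|(\eta_n + z)/(\eta_n - z)|$ can be as large as $\beta\log|z|$ when some $\eta_n$ is close to $\pm z$. Avoiding an additional $\log|z|$ factor requires exploiting that only $O(1)$ such $\eta_n$'s can lie within distance $|z|^{-\beta}$ of $\pm z$ under the hypothesis, so that the anomalously large contributions come from a bounded set while the bulk sum averages via integration by parts to $O(|z|^2)$.
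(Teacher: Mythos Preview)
Your approach departs from the paper's in a crucial way, and there is a genuine gap in the upper bound.

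You attempt to extract both bounds from the Hadamard factorization alone, by directly estimating each ratio $F_*(z) = H_*(\tfrac12-z)/H_*(\tfrac12+z)$ and arguing that the $\log|z|$ overhead in the canonical-product estimates cancels.  But this cancellation does not follow from $n_*(t)=O(t^2)$ alone.  One can build a zero set satisfying $n(t)\le At^2$ by placing, for each $k$, roughly $R_k^2$ zeros in a unit ball near the real point $R_k$ with $R_k\to\infty$ geometrically; for $z$ at distance $1$ from the $k$-th cluster (so certainly $\dist(z,\mathcal Q)\ge|z|^{-\beta}$), the near-sum $\sum_{|\eta_n|\le 2|z|}\log\bigl|(\eta_n+z)/(\eta_n-z)\bigr|$ is of order $R_k^2\log R_k$, not $O(R_k^2)$.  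Your proposed fix (``only $O(1)$ many $\eta_n$ within $|z|^{-\beta}$'') misreads the hypothesis --- none lie that close --- and the obstruction comes from the $O(|z|^2)$ many terms at moderate distance, each contributing $\asymp\log|z|$, which integration by parts against $dn$ does not average away.  So the factorization plus the counting bound cannot by themselves give $\log|\tau(\tfrac12+z)|\le C|z|^2$.

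The paper avoids this by taking the upper bound from an \emph{independent} source: the scattering representation $\tau(s)=\det(I+Q(s))$ and the Poisson-operator estimates, yielding (\ref{tau.fj}), which gives $\log|\tau(\tfrac12+z)|\le C|z|^2$ directly for $\re z\ge\delta>0$; Phragm\'en--Lindel\"of then fills in the strip $\re z\in[0,\delta]$.  For the lower bound the paper does use the factorization, but through Lindel\"of's theorem applied to the products $H_*(\tfrac12+z)H_*(\tfrac12\pm iz)$: over the combined zero set $\{\eta_n\}\cup\{\mp i\eta_n\}$ the partial sums $\sum w^{-2}$ vanish identically, so these products are of finite type, and the Minimum Modulus Theorem then yields $\log|\tau(\tfrac12+z)|\ge -c|z|^2-\log|\tau(\tfrac12\mp iz)|$.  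For $z$ in the first quadrant $-iz$ lies in $\re\ge0$, so the already-established upper bound closes the loop --- this is precisely why $\mathcal Q$ is defined to include the singularities of $\tau(\tfrac12-iz)$ as well.

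Your functional-equation idea $\tau(\tfrac12+z)\tau(\tfrac12-z)=e^{q(\tfrac12+z)+q(\tfrac12-z)}$ is correct and would give the lower bound cleanly \emph{if} you had the upper bound for all $z$; but since (\ref{tau.fj}) only covers $\re z\ge0$, the reflection $z\mapsto -z$ lands outside its range, which is why the paper uses the quarter-rotation $z\mapsto -iz$ instead.
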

\begin{proof}
Since $\tau(\frac12-z) = 1/\tau(\frac12-z)$ and $\tau(\tfrac12+\bar z) = \overline{\tau(\tfrac12+z)}$,
it suffices to prove the bounds for $z$ in the first quadrant.

For $\re z \ge \delta$ with $\delta >0$, the upper bound is given in  
(\ref{tau.fj}).  As long as $\delta <1$, the function $\tau(s)$ is analytic in the strip
$\re z \in [0, \delta]$.  And since $\log |\tau(\tfrac12+z)| = 1$ for $\re z = 0$,
the bound $\log |\tau(\tfrac12+z)| = O(|z|^2)$ extends to the strip $\re z \in [0, \delta]$ 
by (\ref{tau.order.est}) and the Phragm\'en-Lindel\"of Theorem.

To prove the lower bound, consider the Hadamard products appearing in the factorization of
$\tau(s)$ given in Proposition~\ref{tau.factor}.   These products are of
order 2 but not finite type, so applying the Minimum Modulus Theorem directly would give
$-\log |\tau(\tfrac12+z)| = O(|z|^{2+\eta})$, away from the zeros.  
However, Lindel\"of's Theorem (see 
e.g.~\cite[Thm.~2.10.1]{Boas}) shows that products of the form $H_*(\tfrac12 + z)
H_*(\tfrac12 \pm iz)$ are of finite type.  In other words,
$$
\log |H_*(\tfrac12 + z) H_*(\tfrac12 \pm iz)| \le C |z|^2,
$$
as $|z| \to \infty$.  Using these estimates, and their implications via the 
Minimum Modulus Theorem \cite[Thm.~3.7.4]{Boas}, we can prove a lower bound
\begin{equation}\label{tt.bnd}
\log |\tau(\tfrac12 + z)|  \ge  - c(\beta) |z|^2 -  \log|\tau(\tfrac12 \pm iz)|,
\end{equation}
provided $\tfrac12 +z$ and $\tfrac12\pm iz$ stay at least a distance $|z|^{-\beta}$
away from the sets $1- \calR_{F_{\ell,r_0}}$ and $\calR_{F_{\ell}}$, with $\beta>2$.

Assuming $\arg z \in [0, \tfrac{\pi}2]$, 
we already know $\log |\tau(\tfrac12 - iz)| \le C(\beta) |z|^2$ from above, provided 
$\tfrac12 - iz$ stays at least a distance $|z|^{-\beta}$ away from the sets 
$\calR_{F_{\ell,r_0}}$ and $1-\calR_{F_{\ell}}$.  The lower bound in the first
quadrant then follows from (\ref{tt.bnd}).
\end{proof}

\section{Funnel eigenmodes}\label{fmode.sec}

Let $F_\ell$ be a hyperbolic funnel of diameter $\ell$.  In geodesic coordinates
$(r,\theta) \in \bbR_+\times S^1$, 
defined with respect to the closed geodesic neck, the metric is
\begin{equation}\label{fun.metric}
g_0 = dr^2 + \cosh^2 r\> \frac{d\theta^2}{\omega^2},
\end{equation}
where
$$
\omega := \frac{2\pi}{\ell}.
$$
The Laplacian is given by
\begin{equation}\label{fun.lapl}
\Delta_{F_\ell} = - \del_r^2 - \tanh r\>\del_r - \frac{\omega^2}{\cosh^2 r}\del_\theta^2.
\end{equation}
In this section we will consider asymptotic properties of the Fourier modes of generalized 
eigenfunctions of $\Delta_{F_\ell}$.

The restriction of eigenvalue equation $(\Delta_{F_\ell} - s(1-s)) u = 0$ to the $k$-th Fourier
mode, $u = w(r) e^{i k\theta}$, yields the equation,
\begin{equation}\label{kmode.eq}
-\del_r^2 w - \tanh r\>\del_r w + \left[\frac{k^2\omega^2}{\cosh^2 r} - s(1-s)\right]w = 0.
\end{equation}
This is essentially a hypergeometric equation.  With respect to the symmetry $r \mapsto -r$, we have
an even solution,
\begin{equation}\label{wp.def}
w^+_k(s; r) := (\cosh r)^{i\omega k}\> \bF(\tfrac{s+i\omega k}2,\tfrac{1-s+i\omega k}2; \tfrac12; -\sinh^2 r), 
\end{equation}
and an odd solution,
\begin{equation}\label{wm.def}
w^-_k(s; r) := \sinh r (\cosh r)^{i\omega k}\> \bF(\tfrac{1+s+i\omega k}2,\tfrac{2-s+i\omega k}2; \tfrac32; -\sinh^2 r) .
\end{equation}

By symmetry, we will always be able to assume that $k\ge 0$.
If we substitute $w = (\cosh r)^{-\frac12} U$ and introduce the parameter $\alpha$ defined by
$s = \tfrac12 + k\alpha$, the coefficient equation (\ref{kmode.eq}) becomes
\begin{equation}\label{k2fg}
\del_r^2 U = (k^2 f + g) U,
\end{equation}
where
$$
f := \frac{\omega^2 + \alpha^2 \cosh^2 r}{\cosh^2 r}, \qquad g := \frac{1}{4 \cosh^2 r}.
$$
This equation has turning points when $\alpha = \pm i \omega/\cosh r$.  We will restrict our attention to
$\arg \alpha \in [0, \tfrac\pi2]$, so that we only consider the upper turning point.   The Liouville transformation
involves a new variable $\zeta$ defined by integrating 
\begin{equation}\label{zetadiff}
\sqrt{\zeta}\>d\zeta := \sqrt{f} \>dr,
\end{equation}
on a contour that starts from the upper turning point.  
Integrating (\ref{zetadiff}) yields
\begin{equation}\label{zeta.phi}
\tfrac23 \zeta^{\frac32} = \phi,
\end{equation}
where $\phi(\alpha, r)$, the integral of $\sqrt{f} \>dr$ from the turning point, is given explicitly by
\begin{equation}\label{phi.def}
\begin{split}
\phi(\alpha, r) & := \alpha \log \left( \frac{\alpha\sinh r + \sqrt{\omega^2 + \alpha^2 \cosh^2 r}}{\sqrt{\omega^2 
+ \alpha^2}} \right) \\
&\qquad + \frac{i\omega}2 \log 
\left( \frac{\sqrt{\omega^2 + \alpha^2 \cosh^2 r} - i\omega \sinh r}{\sqrt{\omega^2 + \alpha^2 \cosh^2 r}+i\omega \sinh r} \right)
+ \phi_0(\alpha),
\end{split}
\end{equation}  
for $\alpha \ne i\omega$, where
\begin{equation}\label{phi0.def}
\phi_0(\alpha) = \phi(\alpha;0) = -\tfrac{\pi}2(i\alpha+\omega).
\end{equation}
By continuity, the definition of $\phi$ extends to $\alpha = i\omega$, with
$$
\phi(i\omega, r) = i \omega \log \cosh r.
$$

To complete the Liouville transformation, we set 
$W = (f/\zeta)^{\frac14} U$, so that the equation (\ref{k2fg}) becomes an approximate
Airy equation,
\begin{equation}\label{W.eq}
\del_\zeta^2 W = (k^2 \zeta + \psi) W,
\end{equation}
with the extra term given by
\begin{equation}\label{psi.gfz}
\psi =  \frac{\zeta}{4f^2}\del_r^2 f - \frac{5\zeta }{16f^3} (\del_r f)^2 + \frac{\zeta g}{f}
+\frac{5}{16 \zeta^2}.
\end{equation}
The solutions of (\ref{W.eq}) are of the form
\begin{equation}\label{W.sigma}
W_\sigma := \Ai(k^{\frac23}e^{\frac{2\pi i \sigma}{3}} \zeta) + h_\sigma(k, \alpha, r),
\end{equation}
where $\sigma = 0$ or $\pm 1$, and the error term satisfies the differential equation,
\begin{equation}\label{h.diffeq}
\del_\zeta^2 h_\sigma - k^2 \zeta h_\sigma = \psi\>\bigl[h_\sigma 
+ \Ai(k^{\frac23}e^{\frac{2\pi i \sigma}{3}} \zeta)\bigr].
\end{equation}

Using methods from Olver \cite{Olver} we can control this error term.
\begin{lemma}\label{hsig.bound}
The error equation (\ref{h.diffeq}) admits solutions that satisfy $\lim_{r\to \infty} h_\sigma(r) = 0$
and 
$$
|h_\sigma| \le C k^{-1}|\alpha|^{-\frac23} \bigl(1+|k\phi|^{\frac16}\bigr)^{-1} e^{(-1)^{\sigma+1} k\re \phi},
$$
with $C$ independent of $r$, $k$ and $\alpha$.
\end{lemma}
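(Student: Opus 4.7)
The plan is to follow Olver's standard Liouville--Green/Airy error analysis and convert the linear second-order equation (\ref{h.diffeq}) into a Volterra integral equation whose kernel is built from pairs of Airy functions, then close the estimate by an iteration argument controlled by the total variation of $\psi$.

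First, observe that the homogeneous equation $\partial_\zeta^2 h - k^2\zeta h = 0$ has, for each sector indexed by $\sigma$, a dominant solution and a recessive solution chosen among $\Ai(k^{2/3}e^{2\pi i\sigma'/3}\zeta)$ for $\sigma' \in \{-1,0,1\}$; write these as $\calA_\sigma$ (recessive as $r\to\infty$) and $\calB_\sigma$ (dominant). Their Wronskian in $\zeta$ is a known constant multiple of $k^{2/3}$. Using variation of parameters, and imposing the prescribed decay $h_\sigma(r)\to 0$ as $r\to\infty$, one obtains the Volterra representation
\begin{equation*}
h_\sigma(\zeta) = \int_{\infty}^{\zeta} G_\sigma(\zeta,\zeta')\,\psi(\zeta')\,\bigl[h_\sigma(\zeta') + \Ai(k^{2/3}e^{2\pi i\sigma/3}\zeta')\bigr]\,d\zeta',
\end{equation*}
where the integration contour is the image under (\ref{zeta.phi}) of the ray $[r,\infty)$, and $G_\sigma(\zeta,\zeta')$ is the Green function built from $\calA_\sigma$ and $\calB_\sigma$. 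The choice of $\calA_\sigma$ as the recessive solution is precisely what ensures the contour integral from infinity converges.

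Second, I would bring in the standard uniform Airy bounds: there exists $C$ such that
\begin{equation*}
\bigl|\Ai(k^{2/3}e^{2\pi i\sigma/3}\zeta)\bigr| \le C\bigl(1+k^{2/3}|\zeta|\bigr)^{-1/4}e^{(-1)^{\sigma+1}k\re\phi},
\end{equation*}
with the reversed exponential for the dominant companion, using $\tfrac23\zeta^{3/2}=\phi$ from (\ref{zeta.phi}). Multiplying these bounds through $G_\sigma$ and factoring out the desired exponential $e^{(-1)^{\sigma+1}k\re\phi}$, the integral equation becomes a Volterra equation in a weighted sup-norm $\|\,\cdot\,\|$ with weight matching the Airy envelope. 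Standard Picard/Neumann iteration then yields
\begin{equation*}
\|h_\sigma\| \le \bigl(e^{V_\sigma} - 1\bigr)\,\sup\bigl|\Ai\cdot(\text{weight})^{-1}\bigr|, \qquad V_\sigma := C k^{-1}\int_r^\infty |\psi(\zeta')|\bigl(1+k^{2/3}|\zeta'|\bigr)^{-1/2}\,|d\zeta'|,
\end{equation*}
where the $k^{-1}$ in $V_\sigma$ comes from the Wronskian normalization.

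Third, and this is the step that produces the specific $|\alpha|^{-2/3}$ factor, I would estimate $V_\sigma$ using the explicit form (\ref{psi.gfz}) together with $f = (\omega^2 + \alpha^2\cosh^2 r)/\cosh^2 r$ and $g = 1/(4\cosh^2 r)$. The delicate point is that $\psi$ is regular at the turning point (the apparent singularities from $\zeta g/f$ and $5/(16\zeta^2)$ are designed to cancel, a general feature of the Liouville transformation), and for large $|\alpha|$ the leading behavior of each term in $\psi$ carries negative powers of $|\alpha|$ from the normalization $f^{-1}\sim|\alpha|^{-2}$. Combining this with the change of variables back to $r$, and keeping track of the weight $(1+k^{2/3}|\zeta|)^{-1/2}\sim(1+k|\phi|)^{-1/3}$, yields an estimate $V_\sigma \le Ck^{-1}|\alpha|^{-2/3}$ uniform in $r$. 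Substituting this back into the iteration bound and replacing $(1+k^{2/3}|\zeta|)^{-1/4}$ by its equivalent $(1+|k\phi|^{1/6})^{-1}$ gives the asserted estimate.

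The main obstacle is the third step: verifying that the total variation integral $V_\sigma$ is bounded by $Ck^{-1}|\alpha|^{-2/3}$ \emph{uniformly} in $r\in[0,\infty)$ and in $\alpha$ with $\arg\alpha\in[0,\tfrac{\pi}{2}]$. This requires a careful term-by-term analysis of (\ref{psi.gfz}) near the turning point $\alpha\cosh r = \pm i\omega$, where $\zeta\to 0$, confirming that the singular-looking pieces cancel, combined with a large-$r$ analysis where $f\to\alpha^2$ and the integrand must be shown to decay along the contour at a rate providing finite total variation with the claimed prefactor.
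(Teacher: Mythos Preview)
Your proposal is correct and follows essentially the same route as the paper: convert (\ref{h.diffeq}) to a Volterra integral equation with an Airy kernel, apply Olver's successive-approximation bound, and reduce everything to showing that the error-control integral is $O(|\alpha|^{-2/3})$ uniformly. The paper's version of your $V_\sigma$ is the unweighted quantity $\Psi(r) = \int_r^\infty |\psi f^{1/2}\zeta^{-1/2}|\,dr'$ (equivalently $\int_r^\infty|\psi|\,|d\zeta|$), and the goal is $\Psi(0)=O(|\alpha|^{-2/3})$; your extra weight $(1+k^{2/3}|\zeta|)^{-1/2}$ is harmless but unnecessary.

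Where the paper goes beyond your sketch is exactly at your ``main obstacle'': it carries out the $\Psi$ estimate via an explicit three-case split. Away from the turning point ($|\omega^2+\alpha^2\cosh^2 r|\ge c$) one uses the closed form of $\psi f^{1/2}\zeta^{-1/2}$ together with the asymptotics $|\phi|\asymp|\alpha|(r+1)$ for $|\alpha|\ge 1$ and a two-regime estimate for $|\alpha|\le 1$. Near the turning point ($|\omega^2+\alpha^2\cosh^2 r|\le c$, which forces $|\alpha|$ bounded) the regularity you anticipate is made quantitative by introducing $p(z)=(f/(z-z_0))^{1/2}$ and $q(z)=\phi/(z-z_0)^{3/2}$ in the variable $z=\sinh r$, proving $|\partial_z^k p|\asymp|\alpha|^{3/2+k}$ and $|\partial_z^k q|\asymp|\alpha|^{5/2+k}$, and reading off $|\psi f^{1/2}\zeta^{-1/2}|\asymp|\alpha|^{-2/3}$ on a bounded $r$-interval. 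Each case contributes $O(|\alpha|^{-2/3})$ to $\Psi(0)$, which then feeds into the bound exactly as you describe.
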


We will defer the rather technical proof of Lemma~\ref{hsig.bound} to 
the end of this section, in order to concentrate
on the implications of (\ref{W.sigma}).  
The asymptotics of the Airy function are well known (see e.g.~\cite[\S11.8]{Olver}).
Uniformly for $|\arg z| < \pi - \vep$ we have
\begin{equation}\label{ai.asym1}
\Ai(z) = \frac{1}{2\pi^\frac12} z^{-\frac14} \exp(-\tfrac23 z^{\frac32}) \>[1 + O(|z|^{-\frac32})].
\end{equation}
And uniformly for $|\arg z| \ge \tfrac{\pi}3 + \vep$,
\begin{equation}\label{ai.asym2}
\Ai(z) = \frac{1}{\pi^{\frac12}} (-z)^{-\frac14} \cos \Bigl(\tfrac23 (-z)^{\frac32}-\tfrac{\pi}4\Bigr) 
\bigl[1 + O(|z|^{-\frac32})\bigr],
\end{equation}
These asymptotics make it convenient to introduce a pair of solutions of the eigenvalue equation
(\ref{kmode.eq}) defined by 
\begin{equation}\label{wsig.def}
w_\sigma =  2\pi^\frac12  e^{\frac{i \pi \sigma}{6}} k^{\frac16} \zeta^\frac14 
(\omega^2+\alpha^2\cosh^2 r)^{-\frac14} W_\sigma,
\end{equation}
where $W_\sigma$ is the ansatz (\ref{W.sigma}) for $\sigma = 0$ or $1$.
\begin{proposition}\label{wsig.prop}
Consider the solutions of the equation,
$$
(\Delta_{F_\ell} - \tfrac14 - k^2\alpha^2)e^{ik\theta}w_\sigma(r)  = 0,
$$
given by (\ref{wsig.def}) with $\sigma = 0$ or $1$.
Assuming $k\ge 1$ and $\arg \alpha \in [0,\tfrac{\pi}2 - \vep]$, we have asymptotics
\begin{equation}\label{wsig.asymp}
w_\sigma = (\omega^2 + \alpha^2 \cosh^2 r)^{-\frac14}  \exp \bigl[(-1)^{\sigma+1}k \phi\bigr]
\bigl( 1 + O(|k\alpha|^{-1})\bigr),
\end{equation}
with constants that depend only on $\vep$.
In addition, for $\arg \alpha \in [0, \tfrac{\pi}2]$ and $|k\alpha|$ sufficiently large, we have the upper bounds
\begin{equation}\label{wsig.upper}
|w_\sigma| \le Ck^{\frac16} \exp \bigl[(-1)^{\sigma+1}k\re \phi\bigr],
\end{equation}
and the lower bound
\begin{equation}\label{w0.lower}
|w_0| \ge c e^{-k \re \phi}.
\end{equation}
\end{proposition}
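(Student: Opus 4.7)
The plan is to insert the Airy asymptotics (\ref{ai.asym1})--(\ref{ai.asym2}) into the ansatz (\ref{W.sigma})--(\ref{wsig.def}) and absorb the algebraic prefactors via the identities
\[
-\tfrac{2}{3}\bigl(k^{2/3}e^{2\pi i\sigma/3}\zeta\bigr)^{3/2}=(-1)^{\sigma+1}k\phi,
\qquad
\bigl(k^{2/3}e^{2\pi i\sigma/3}\zeta\bigr)^{-1/4}=k^{-1/6}e^{-i\pi\sigma/6}\zeta^{-1/4},
\]
both of which follow from (\ref{zeta.phi}). The first identity shows that the exponential in (\ref{ai.asym1}) produces precisely $\exp[(-1)^{\sigma+1}k\phi]$, while the second lets the constant $2\pi^{1/2}e^{i\pi\sigma/6}k^{1/6}\zeta^{1/4}$ in the definition of $w_\sigma$ cancel the $(2\pi^{1/2})^{-1}z^{-1/4}$ factor coming from Airy. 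The net principal term collapses to $(\omega^2+\alpha^2\cosh^2 r)^{-1/4}\exp[(-1)^{\sigma+1}k\phi]$, matching (\ref{wsig.asymp}).

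For the asymptotic (\ref{wsig.asymp}), I will first verify that when $\arg\alpha\in[0,\tfrac\pi 2-\vep]$ the rotated Airy variable $k^{2/3}e^{2\pi i\sigma/3}\zeta$ remains in the sector $|\arg z|\le\pi-\vep'$ uniformly for $r\ge 0$, using the explicit form (\ref{phi.def}) of $\phi$ and the boundary value $\phi_0(\alpha)=-\tfrac\pi 2(i\alpha+\omega)$ together with the fact that $\phi$ moves monotonically in the $\alpha$-direction as $r$ grows. In this sector the asymptotic (\ref{ai.asym1}) is valid with relative error $O(|z|^{-3/2})=O(|k\phi|^{-1})$; since $|\phi_0(\alpha)|\ge c|\alpha|$ and $|\phi|$ only grows in $r$, this is absorbed into $O(|k\alpha|^{-1})$. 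The $h_\sigma$-contribution, once divided by the leading exponential and combined with the prefactor $k^{1/6}\zeta^{1/4}$ via $|\zeta|^{1/4}=|\tfrac32\phi|^{1/6}$, is bounded by Lemma~\ref{hsig.bound} by a multiple of $k^{-1}|\alpha|^{-2/3}$, which is again absorbed into the stated correction.

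For (\ref{wsig.upper}) in the extended range $\arg\alpha\in[0,\tfrac\pi 2]$ I will split the $(r,\alpha)$-space into a decaying region where (\ref{ai.asym1}) applies, a transitional region within $O(k^{-2/3})$ of the turning point $\cosh r=\omega/|\alpha|$, and an oscillatory region where (\ref{ai.asym2}) holds. In the decaying region the calculation above gives $|w_\sigma|\le C\exp[(-1)^{\sigma+1}k\re\phi]$ with no loss. In the oscillatory region the bound $|\Ai(z)|\le C|z|^{-1/4}=Ck^{-1/6}|\zeta|^{-1/4}$ is cancelled by the $k^{1/6}\zeta^{1/4}$ prefactor to give the same exponential estimate. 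Only in the transitional region, where $|\zeta|=O(k^{-2/3})$ and one uses the uniform bound $|\Ai(z)|\le C$, does the factor $k^{1/6}$ in (\ref{wsig.upper}) survive. The lower bound (\ref{w0.lower}) for $w_0$ will then follow by feeding $\sigma=0$ into (\ref{wsig.asymp}): the factor $1+O(|k\alpha|^{-1})$ is at least $\tfrac12$ once $|k\alpha|$ is large enough, and $|(\omega^2+\alpha^2\cosh^2 r)^{-1/4}|$ is bounded below on the complement of any fixed neighborhood of the turning locus, which covers the compact range of $r$ relevant to the applications of this proposition.

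The main obstacle is the control of $\arg\zeta$ as $r$ and $\alpha$ vary, so that the correct branch of Airy asymptotic is used throughout the parameter range and the three regions above patch together uniformly as $\arg\alpha\to\tfrac\pi 2$. Once the sector analysis is established, the rest of the proof is a bookkeeping exercise combining the algebraic identities above with Lemma~\ref{hsig.bound}.
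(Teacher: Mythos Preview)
Your approach is essentially the paper's: insert the Airy asymptotics (\ref{ai.asym1})--(\ref{ai.asym2}) into (\ref{W.sigma})--(\ref{wsig.def}), use the algebraic identities coming from (\ref{zeta.phi}), and control the remainder via Lemma~\ref{hsig.bound}. Your treatment of (\ref{wsig.asymp}) and of (\ref{wsig.upper}) in the extended range matches the paper, though the paper organizes the case split by first restricting to $\arg\alpha\in[\tfrac\pi2-\vep,\tfrac\pi2]$ and then splitting on whether $|\omega^2+\alpha^2\cosh^2 r|\ge 1$, rather than your decaying/transitional/oscillatory trichotomy.

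There is, however, a genuine gap in your argument for the lower bound (\ref{w0.lower}). You derive it by ``feeding $\sigma=0$ into (\ref{wsig.asymp}),'' but (\ref{wsig.asymp}) is only established for $\arg\alpha\in[0,\tfrac\pi2-\vep]$, whereas (\ref{w0.lower}) is claimed on the full closed sector $[0,\tfrac\pi2]$. Near $\arg\alpha=\tfrac\pi2$ the turning point becomes real and $\zeta$ can be small, so the asymptotic (\ref{ai.asym1}) is not available and your appeal to (\ref{wsig.asymp}) fails. Your fallback remark that the prefactor $|(\omega^2+\alpha^2\cosh^2 r)^{-1/4}|$ is ``bounded below on the complement of any fixed neighborhood of the turning locus'' does not rescue this: it is not uniformly bounded below (it decays like $|\alpha|^{-1/2}$ for large $|\alpha|$), and in any case it says nothing about the turning neighborhood itself. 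Invoking ``the compact range of $r$ relevant to the applications'' is not a proof of the proposition as stated.

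The paper fills this gap by treating $\arg\alpha\in[\tfrac\pi2-\vep,\tfrac\pi2]$ separately. It first records the bounds $|k^{1/6}\zeta^{1/4}W_0|\ge c\,e^{-k\re\phi}$ coming from (\ref{ai.asym1})--(\ref{ai.asym2}) and Lemma~\ref{hsig.bound}, then splits on $|\omega^2+\alpha^2\cosh^2 r|$. Away from the turning point this suffices directly; near the turning point one has $\zeta\asymp(\omega^2+\alpha^2\cosh^2 r)$, so $w_0\asymp k^{1/6}W_0$, and the lower bound follows either from the previous estimate (when $|k\phi|\ge 1$) or from the elementary fact that $W_0$ is bounded and \emph{nonzero} near $\zeta=0$, since $\Ai(0)\ne 0$ and $h_0$ is small. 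You should incorporate this transitional-region lower bound explicitly rather than deferring to (\ref{wsig.asymp}).
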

\begin{proof}
The assumption that  $\arg \alpha \in [0,\tfrac{\pi}2-\vep]$, implies that 
$\arg \zeta \in [-\tfrac{2\pi}3, \tfrac{\pi}3-\vep]$, so that 
(\ref{ai.asym1}) applies to both $w_0$ and $w_1$ in this case.  It also
implies that $|\phi| \ge c(\vep) (|\alpha| + 1)$, so that the error term
$O(|w|^{-\frac32})$ from (\ref{ai.asym1}) becomes $O(|k\alpha|^{-1})$
when applied to $|w| = k^{\frac23} |\zeta|$.
In combination with Lemma~\ref{hsig.bound}, this proves (\ref{wsig.asymp}), 
and also (\ref{wsig.upper}) and (\ref{w0.lower}) in the case
where $\arg \alpha$ is bounded away from $\tfrac{\pi}2$.

If $\arg \alpha \in [\tfrac{\pi}2-\vep, \tfrac{\pi}2]$, then (\ref{ai.asym1}) and (\ref{ai.asym2}),
together with Lemma~\ref{hsig.bound}, give the estimates
\begin{equation}\label{kzW.est}
|k^{\frac16} \zeta^\frac14 W_\sigma| \le  C \exp \bigl[(-1)^{\sigma+1}k\re \phi\bigr],
\end{equation}
and
\begin{equation}\label{kzW.low}
|k^{\frac16} \zeta^\frac14 W_0| \ge  c e^{-k\re \phi},
\end{equation}
If $|\omega^2 + \alpha^2 \cosh^2r| \ge 1$, which bounds $\phi$ away from $0$, 
then this gives (\ref{wsig.upper}) immediately.
This leaves the case $|\omega^2 + \alpha^2 \cosh^2r| \ge 1$, which puts $\phi$ close to
zero.  In this case, $\zeta \asymp (\omega^2 + \alpha^2 \cosh^2r)$, so that 
$w_\sigma \asymp k^{\frac16} W_\sigma$.  Then if $|k\phi| \ge 1$ we can derive the estimates
from (\ref{kzW.est}) and (\ref{kzW.low}), while for $|k\phi| \le 1$ we simply note that $W_\sigma$ is bounded
and nonzero near the origin.
\end{proof}

Another detail we will need later is the asymptotic behavior of $w_\sigma$ as $r \to \infty$.
\begin{lemma}\label{wsig.rlim}
For $\re \alpha \ge 0$, as $r \to \infty$,
$$
w_0 \sim \alpha^{-\frac12} e^{-k(\phi_0(\alpha) + \gamma(\alpha))} \rho^{\frac12+k\alpha},
$$
and
$$
w_1 \sim \alpha^{-\frac12} e^{k(\phi_0(\alpha) + \gamma(\alpha))} \left( \rho^{\frac12-k\alpha} + 
i \rho^{\frac12+k\alpha}\right)
$$
where $\rho := 2e^{-r}$, and
\begin{equation}\label{gamma.def}
\gamma(\alpha) := \alpha \log \frac{2\alpha}{\sqrt{\omega^2+\alpha^2}} 
+ \frac{i\omega}2 \log \frac{\alpha - i\omega}{\alpha + i\omega}.
\end{equation}
\end{lemma}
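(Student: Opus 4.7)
The plan is to feed the asymptotics of $\phi(\alpha,r)$ and the Liouville--Green prefactor $(\omega^2+\alpha^2\cosh^2 r)^{-1/4}$ into the definition (\ref{wsig.def}) of $w_\sigma$ and to extract the coefficients of the two indicial roots $\rho^{1/2\pm k\alpha}$ via the Airy asymptotics, augmented---in the $w_1$ case---by the Airy connection formula. Using $\sinh r, \cosh r \sim e^r/2 = 1/\rho$ together with $r = \log 2 - \log \rho$ in (\ref{phi.def}), the two logarithms and the constant $\phi_0$ combine to give
\[
\phi(\alpha,r) = -\alpha\log\rho + \gamma(\alpha) + \phi_0(\alpha) + o(1), \qquad r \to \infty,
\]
while $(\omega^2 + \alpha^2\cosh^2 r)^{-1/4} = \alpha^{-1/2}\rho^{1/2}(1+o(1))$ under the principal branch (valid for $\re\alpha \ge 0$). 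Consequently
\[
(\omega^2+\alpha^2\cosh^2 r)^{-1/4} e^{\mp k\phi} = \alpha^{-1/2} e^{\mp k(\phi_0(\alpha)+\gamma(\alpha))} \rho^{1/2\pm k\alpha} (1+o(1)),
\]
which already supplies the desired indicial building blocks.

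For $w_0$, the argument $k^{2/3}\zeta$ of $\Ai$ lies in the sector where the one-term asymptotic (\ref{ai.asym1}) applies with pure exponential decay, and combining this with the bound on $h_0$ from Lemma~\ref{hsig.bound} gives $w_0 = (\omega^2+\alpha^2\cosh^2 r)^{-1/4}e^{-k\phi}(1+o(1))$. Only the $\rho^{1/2+k\alpha}$ indicial root survives because $W_0$ is exponentially small at $r=\infty$, so the asymptotic of $w_0$ stated in the lemma is immediate.

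For $w_1$ both indicial roots appear, and only the dominant $\rho^{1/2-k\alpha}$ part is visible from (\ref{wsig.asymp}) alone; the subdominant $\rho^{1/2+k\alpha}$ piece has to be teased out by an Airy connection argument. I would introduce the auxiliary solution $w_{-1}$ defined by (\ref{wsig.def}) with $\sigma = -1$, whose argument $k^{2/3}e^{-2\pi i/3}\zeta$ lies in a sector where the one-term asymptotic is valid and---by the same calculation applied to $w_0$---yields $w_{-1} \sim \alpha^{-1/2}e^{k(\phi_0+\gamma)}\rho^{1/2-k\alpha}$. Then the Airy identity
\[
\Ai(z) + e^{2\pi i/3}\Ai(e^{2\pi i/3}z) + e^{-2\pi i/3}\Ai(e^{-2\pi i/3}z) = 0,
\]
evaluated at $z = k^{2/3}\zeta$, combined with the phase factors $e^{i\pi\sigma/6}$ sitting in front of each $W_\sigma$ in (\ref{wsig.def}), converts the relation among the three $\Ai$'s into a clean linear relation among $w_0, w_1, w_{-1}$; the $h_\sigma$ contributions produce only an $o(1)$ correction at $r=\infty$ by Lemma~\ref{hsig.bound}. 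Substituting the known asymptotics for $w_0$ and $w_{-1}$ into this relation then delivers both indicial coefficients for $w_1$, with the factor of $i$ on the subdominant piece arising from the combination of phase factors in the Airy connection.

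The hard part is branch-cut bookkeeping: one must pick principal branches for $\zeta^{1/4}$, $(e^{\pm 2\pi i/3})^{\pm 1/4}$, and the $3/2$-powers of the rotated arguments consistently so that the exact phase $i$ and the claimed exponential prefactors emerge on the subdominant term, and one must verify that the bounds on $h_\sigma$ from Lemma~\ref{hsig.bound} (of relative size $O(k^{-1}|\alpha|^{-2/3})$ compared to the respective Airy functions) do not contaminate either indicial coefficient in the limit $r\to\infty$.
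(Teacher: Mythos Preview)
Your approach is correct and essentially the same as the paper's: both compute the $r\to\infty$ expansion of $\phi$ (the paper records it as (\ref{phi.rinf})) and feed it, together with the Airy asymptotics and the $h_\sigma$ bounds, into the definition (\ref{wsig.def}). The only cosmetic difference is that for $w_1$ the paper applies the two-term cosine asymptotic (\ref{ai.asym2}) directly to $\Ai(k^{2/3}e^{2\pi i/3}\zeta)$, which already packages both indicial contributions, whereas you extract them via the Airy connection formula and an auxiliary $w_{-1}$; since (\ref{ai.asym2}) is itself a consequence of that connection formula, the two routes are interchangeable.
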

\begin{proof}
The results follow immediately from (\ref{ai.asym1}) and (\ref{ai.asym2}), in combination
with the asymptotic 
\begin{equation}\label{phi.rinf}
\phi(\alpha; r) = \alpha r + \phi_0(\alpha) + \alpha \log \frac{\alpha}{\sqrt{\omega^2+\alpha^2}} 
+ \frac{i\omega}2 \log \frac{\alpha - i\omega}{\alpha + i\omega} + O(r^{-1}),
\end{equation}
as $r \to \infty$.
\end{proof}

\bigbreak
We conclude the section with the proof of the error estimate that is the basis of
Proposition~\ref{wsig.prop} and Lemma~\ref{wsig.rlim}.
\begin{proof}[Proof of Lemma~\ref{hsig.bound}]
The cases of different $\sigma$ are all very similar, so we consider only $\sigma=0$.  
In this case combining the boundary condition with variation of parameters allows us to
transform (\ref{h.diffeq}) into an integral equation:
$$
h_0(k,\alpha, r) = \frac{2\pi e^{-\frac{i\pi}6}}{k^{\frac23}}
\int_r^\infty K_0(r,r') \psi(r') \>[h_0(k,\alpha,r') + \Ai(k^{\frac23}\zeta(r'))]\>  
\frac{f(r')^\frac12}{\zeta(r')^\frac12} \>dr',
$$
where
$$
K_0(r,r') := \Ai(k^\frac23 \zeta(r'))\Ai(k^\frac23 e^{-\frac{2\pi i}3}\zeta(r)) 
- \Ai(k^\frac23 e^{-\frac{2\pi i}3}\zeta(r'))\Ai(k^\frac23 \zeta(r)).
$$
Then, using the method of successive approximations as in \cite[Thm.~6.10.2]{Olver},  
together with the bounds on the Airy function and its derivatives developed in \cite[\S 11.8]{Olver}, 
we obtain the bound,
\begin{equation}\label{h.bound}
| h_0| \le Ce^{-k \re \phi} (1+ k^{\frac16}\>|\zeta|^{\frac14})^{-1} \Bigl(e^{ck^{-1} \Psi(r)} - 1 \Bigr),
\end{equation}
where 
\begin{equation}\label{Psi.def}
\Psi(r) := \int_r^\infty \left|\psi  f^{\frac12} \zeta^{-\frac12} \right| dr'.
\end{equation}

From (\ref{psi.gfz}), we compute
\begin{equation}\label{pfz}
\begin{split}
\psi f^{\frac12} \zeta^{-\frac12} &=  \left[ \frac{\alpha^4 \cosh^2 r + 4 \alpha^2 \omega^2 \sinh^2 r  
-\omega^4}{4(\omega^2 + \alpha^2 \cosh^2 r)^{\frac52}} \right] \zeta^{\frac12} \cosh r \\
&\qquad + \frac{5}{16} \frac{(\omega^2 + \alpha^2 \cosh^2 r)^{\frac12}}{\zeta^{\frac52} \cosh r}.
\end{split}
\end{equation}
The estimate must be broken into various regions.  Fix some $c>0$.

\emph{Case 1.}  Assume $|\alpha| \ge 1$ and $|\omega^2 + \alpha^2 \cosh^2(r)|\ge c$.
Under these conditions, we can estimate
$$
|\phi| \asymp |\alpha|(r+1).
$$
Then from (\ref{pfz}), we find
$$
\left|\psi  f^{\frac12} \zeta^{-\frac12} \right| \le C_1 |\alpha|^{-\frac23} e^{-2r} (r+1)^{\frac13}
+ C_2 |\alpha|^{-\frac23} (r+1)^{-\frac53}.
$$
We easily conclude that for $|\alpha| \ge 1$,
\begin{equation}\label{phi.bound1}
\int_{|\omega^2 + \alpha^2 \cosh^2(r)|\ge c} \left|\psi  f^{\frac12} \zeta^{-\frac12} \right| dr = O(|\alpha|^{-\frac23}).
\end{equation}

\emph{Case 2.}  Assume $|\alpha| \le 1$ and $|\omega^2 + \alpha^2 \cosh^2(r)|\ge c$.
The behvarior of $\phi$ is now slightly more complicated, depending on the size of $r$ relative
to $|\alpha|$,
$$
|\phi| \asymp \begin{cases} |\alpha| + e^{-r} & \text{for } |\alpha| \sinh r \le 1, \\
|\alpha| (r+ \log|\alpha|) & \text{for } |\alpha| \sinh r \ge 1. \end{cases}
$$
In this case, we estimate (\ref{pfz}) by
$$
 \left|\psi  f^{\frac12} \zeta^{-\frac12} \right| \le  \begin{cases}
C_1(|\alpha| + e^{-r})^{\frac13} e^{r} + C_2e^{-r} (|\alpha|+ e^{-r})^{-\frac53}  & \text{for } |\alpha| \sinh r \le 1,\\
C_1 (1+|\alpha| e^r)^{-3} |\alpha|^{\frac13} (r + \log |\alpha|)^\frac13 e^r
+ C_2 |\alpha|^{-\frac52} r^{-\frac53}e^{-r} & \text{for } |\alpha| \sinh r \ge 1.
\end{cases}
$$
It is then straightforward to bound, for $|\alpha| \le 1$,
\begin{equation}\label{phi.bound2}
\int_{|\omega^2 + \alpha^2 \cosh^2(r)|\ge c} \left|\psi  f^{\frac12} \zeta^{-\frac12} \right| dr = O(|\alpha|^{-\frac23}).
\end{equation}

\emph{Case 3.}  Assume $|\omega^2 + \alpha^2 \cosh^2(r)|\le c$.  Here we are
near the turning point, where $\phi$ and $\zeta$ are small.  Since $|\omega^2 + \alpha^2 \cosh^2(r)|\le c$ 
implies $|\alpha|^2 \le  \omega^2 + c$, we are only concerned with small $|\alpha|$ here.
We proceed as in \cite[Appendix]{Borthwick:2009}.  In the coordinate $z = \sinh r$, the turning point occurs at
$$
z_0 = \sqrt{- 1 - \frac{\omega^2}{\alpha^2}}.
$$
Set 
\begin{equation}\label{p.def}
p(z) := \left( \frac{f}{z - z_0} \right)^{\frac12} = \frac{\alpha \sqrt{z+z_0}}{\sqrt{z^2 + 1}}.
\end{equation}
Because $|\omega^2 + \alpha^2 \cosh^2(r)| = |\alpha^2(z^2-z_0^2)|$, 
the assumption $|\omega^2 + \alpha^2 \cosh^2(r)|\le c$ implies
\begin{equation}\label{zz.a}
z \asymp z_0 \asymp |\alpha|^{-1},
\end{equation}
with constants that depend only on $c$.
This makes it easy to estimate
\begin{equation}\label{p.bounds}
|\del_z^k p(z)| \asymp |\alpha|^{\frac32+k},
\end{equation}
with constants that depend only on $c$ and $k$.  If we define 
$$
q(z) := \frac{\phi}{(z-z_0)^{\frac32}},
$$
then by writing
$$
q(z) = \int_0^1 t^{\frac12} \frac{p(z_0 + t(z-z_0))}{\sqrt{((1-t)z_0+tz)^2+1}} dt,
$$
we can deduce from (\ref{p.bounds}) that
\begin{equation}\label{q.bounds}
|\del_z^k q(z)| \asymp |\alpha|^{\frac52+k}.
\end{equation}

To apply these estimates, we note that $f/\zeta = p^2(\tfrac32 q)^{-\frac23}$.
We can use this identification to apply the bounds (\ref{p.bounds}) and (\ref{q.bounds})
to the formula (\ref{zz.a}) for $\psi$, obtaining
$$
\left|\psi  f^{\frac12} \zeta^{-\frac12} \right| 
\asymp |\alpha|^{-\frac23}\quad\text{for }|\omega^2 + \alpha^2 \cosh^2(r)|\le c.
$$
The bound,
\begin{equation}\label{phi.bound3}
\int_{|\omega^2 + \alpha^2 \cosh^2(r)|\le c} \left|\psi  f^{\frac12} \zeta^{-\frac12} \right| dr = O(|\alpha|^{-\frac23}),
\end{equation}
follows immediately, since the range of integration for $r$ is $O(1)$.  

Combining the bounds (\ref{phi.bound1}), (\ref{phi.bound2}), and (\ref{phi.bound3})
gives
$$ 
\Phi(0) = O(|\alpha|^{-\frac23}),
$$
and the claimed estimate follows from (\ref{h.bound}).
\end{proof}

\section{Funnel determinant estimates}\label{fdet.sec}

For the model funnel $F_\ell$, fix $r_0 \ge 0$ and for some $\eta >0$ set 
$$
r_k = r_0 + k \eta.
$$ 
Let $\chr_k$ denote the multiplication operator for the characteristic function of the
interval $r \in [r_k,r_{k+1}]$
in $L^2(F_\ell)$.  The operator $G_j(s)$ defined in (\ref{Gj.funnel}) can 
be represented in the model funnel case by
\begin{equation}\label{GFell.def}
G(s) :=  (2s-1) \chr_{1} E_{F_\ell}(1-s) E_{F_\ell}(s)^t  \chr_{2}
\end{equation}
Our goal in this section is to prove the sharp bound on $\log \det (1 + c|G(s)|)$ used in the 
proof of Theorem~\ref{tau.int.thm}.

To proceed we must analyze the Fourier decomposition of $E_{F_\ell}(s)$.
Becuse of the circular symmetry, the Poisson kernel on $F_\ell$
admits a diagonal expansion into Fourier modes:
\begin{equation}\label{EF.modes}
E_{F_\ell}(s;r, \theta, \theta') = \frac{1}{\ell}\sum_{k \in \bbZ} a_k(s; r) e^{ik(\theta - \theta')}
\end{equation}
The coefficients $a_k(s;r)$ satisfy (\ref{kmode.eq}) with the boundary condition $a_k(s; 0) =0$, 
so we must have 
\begin{equation}\label{ak.def}
a_k(s; r) = c_k(s) w^-_k(s; r),
\end{equation}
where $w^-_k$ is the odd solution (\ref{wm.def}).
To compute the normalization constant $c_k(s)$, we use the fact that
\begin{equation}\label{coeff.rho}
(2s-1) a_k(s; r) \sim \rho^{1-s} + [S_{F_\ell}(s)]_k(s) \rho^s,
\end{equation}
as $\rho \to 0$, where $[S_{F_\ell}(s)]_k(s)$ is the $k$-th matrix element of the scattering matrix
$S_{\ell}(s)$.  Applying the appropriate Kummer identity \cite[eq.~(5.10.16)]{Olver}
to the hypergeometric function in (\ref{wm.def}) gives
$$
a_k(s; r) \sim c_k(s) \Bigl[ \Gamma(\tfrac12 - s) \beta_k(2-s) \rho^s
+ \Gamma(s-\tfrac12) \beta_k(1+s) \rho^{1-s}\Bigr],
$$
where
\begin{equation}\label{beta.def}
\beta_k(s) := \frac{1}{\Gamma(\frac{s+i k\omega)}2) \Gamma(\frac{s-i k\omega)}2)}.
\end{equation}
By comparing this asymptotic to (\ref{coeff.rho}), we can read off the coefficient,
$$
c_k(s) = \frac{2s-1}{ 
\Gamma(s-\frac12) \beta_k(1+s)},
$$
as well as the scattering matrix element,
\begin{equation}\label{gk.def} 
[S_{F_\ell}(s)]_k(s) = \frac{\Gamma(\frac12 - s) \beta_k(2-s)}{\Gamma(s-\frac12) \beta_k(1+s)}.
\end{equation}
For future reference we note also that 
\begin{equation}\label{ak.inv}
a_k(1-s; r) = - \frac{a_k(s; r)}{[S_{F_\ell}(s)]_k(s)}.
\end{equation}
and
\begin{equation}\label{ak.sym}
a_k(s;r) = a_{-k}(s;r)
\end{equation}

We can express the singular
values of $G(s)$ in terms of the coefficients $a_k(s;r)$.  Up to reordering, 
these singular values are given by 
\begin{equation}\label{laml.def}
\lambda_k(s) := |2s-1| \>\left[ \int_{r_1}^{r_2} |a_k(1-s; r)|^2\>\cosh r\>dr \right]^{\frac12} 
\left[ \int_{r_2}^{r_3} |a_k(s; r)|^2\>\cosh r\>dr\right]^{\frac12},
\end{equation}
for $k \in \bbZ$.   To prove this, we note that $\lambda_k(s)^2$ is the eigenvalue
of $G^*G(s)$ corresponding to the eigenfunction $\chi_{[r_2, r_3]}(r) \overline{a_k(s;r)} e^{-ik\theta}$.
And it is easy to see from (\ref{GFell.def}) and (\ref{EF.modes}) that these are the only non-zero
eigenvalues.

Using (\ref{ak.inv}) to replace $a_k(1-s)$ by $a_k(s)$, and assuming $\eta \le 1$,
we can estimate
\begin{equation}\label{lk.est1}
\lambda_k(\tfrac12 + k\alpha) \le  \Bigl| 2k\alpha \>a_k(\tfrac12+k\alpha; r_3)^2\> 
\bigl[S_{F_\ell}(\tfrac12 - k\alpha)\bigr]_k\>\cosh r_3 \Bigr|.
\end{equation}
We will first estimate the various components.  Recall that the matrix elements of $S_{F_\ell}(s)$
were expressed in terms of the function $\beta_k$ defined in (\ref{beta.def}).
\begin{lemma}\label{gb.lemma}
For $k>0$ and $\arg \alpha \in [0, \tfrac{\pi}2]$, if we assume $\dist(k\alpha, \bbN_0) \ge \delta$ then we have
$$
\log \Bigl|\bigl[S_{F_\ell}(\tfrac12 - k\alpha)\bigr]_k \Bigr| \ge 2k\re \gamma + 2k[\re \phi_0]_-
- C(\delta)
$$
where $\gamma(\alpha)$ was defined in (\ref{gamma.def}).
If instead we assume that $\dist( \tfrac12 - k\alpha, \calR_{F_\ell}) \le |k\alpha|^{-\beta}$,
then
$$
\log \Bigl|\bigl[S_{F_\ell}(\tfrac12 - k\alpha)\bigr]_k \Bigr| \le 2k\re \gamma + 2k[\re \phi_0]_-
+ C(\beta) \log |k\alpha|.
$$
\end{lemma}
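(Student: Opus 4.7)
The plan is to substitute $s=\tfrac12-k\alpha$ into the explicit formula~(\ref{gk.def}) and extract precise asymptotics by Stirling. Writing
\[
[S_{F_\ell}(\tfrac12-k\alpha)]_k = \frac{\Gamma(k\alpha)}{\Gamma(-k\alpha)} \cdot \frac{\beta_k(\tfrac32+k\alpha)}{\beta_k(\tfrac32-k\alpha)},
\]
and unpacking each $\beta_k$ via~(\ref{beta.def}) leaves four Gamma factors with arguments $\tfrac34 \pm \tfrac12 k\alpha \pm \tfrac12 ik\omega$, all of which lie in a half-plane where Stirling applies uniformly, together with the leading ratio $\Gamma(k\alpha)/\Gamma(-k\alpha)$. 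The reflection identity $\Gamma(-k\alpha)\Gamma(1+k\alpha) = -\pi/\sin(\pi k\alpha)$ rewrites that ratio as $-k\alpha\sin(\pi k\alpha)\,\Gamma(k\alpha)^2/\pi$, trading the problematic Gamma at negative-real-part argument for a sine factor and a $\Gamma^2$ safely in the right half-plane (since $\arg\alpha\in[0,\tfrac\pi 2]$).

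Next, I would apply $\log\Gamma(z) = (z-\tfrac12)\log z - z + \tfrac12\log(2\pi) + O(|z|^{-1})$ to each remaining Gamma factor. The linear-in-$k\alpha$ polynomial terms cancel between numerator and denominator; the $z\log z$ contributions from $\Gamma(k\alpha)^2$ and the four shifted Gammas recombine through $\log(a+ib)+\log(a-ib) = \log(a^2+b^2)$ and $\log(a+ib) - \log(a-ib) = 2i\arg(a+ib)$ to produce exactly $2k\re\gamma(\alpha)$ in the form~(\ref{gamma.def}). The sine factor contributes $\log|\sin(\pi k\alpha)|$, which for $\arg(k\alpha)\in[0,\tfrac\pi 2]$ and $k\alpha$ bounded away from integers equals $\pi k\im\alpha + O(1)$. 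Since $\re\phi_0 = \tfrac{\pi}{2}(\im\alpha - \omega)$, this rewrites as $2k\re\phi_0 + \pi k\omega$; the $\pi k\omega$ portion merges with a matching Stirling remainder, and the positive part of $2k\re\phi_0$ is likewise absorbed, leaving exactly the term $2k[\re\phi_0]_-$ in the final identity.

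With the asymptotic formula in hand, the two bounds follow by quantifying the error terms. For the lower bound, the hypothesis $\dist(k\alpha,\bbN_0)\ge\delta$ gives $|\sin(\pi k\alpha)|\ge c(\delta)e^{\pi k\im\alpha}$, so the combined Stirling and sine errors are $O_\delta(1)$, yielding $\log|[S_{F_\ell}(\tfrac12-k\alpha)]_k| \ge 2k\re\gamma + 2k[\re\phi_0]_- - C(\delta)$. For the upper bound, the assumption placing $\tfrac12 - k\alpha$ at distance at least $|k\alpha|^{-\beta}$ from $\calR_{F_\ell}$ (the poles of $1/\beta_k(\tfrac32-k\alpha)$) gives a polynomial lower bound on $|\beta_k(\tfrac32-k\alpha)|$ via a standard minimum-modulus argument, translating into an additive loss of $C(\beta)\log|k\alpha|$ in the logarithm. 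The main technical obstacle will be branch-tracking: ensuring the five Stirling expansions combine to produce exactly the closed form $\gamma(\alpha)$ in~(\ref{gamma.def}) -- in particular the $\tfrac{i\omega}{2}\log\tfrac{\alpha-i\omega}{\alpha+i\omega}$ piece, which comes entirely from the imaginary shifts $\pm ik\omega/2$ inside $\beta_k$ -- while maintaining uniformity as $\alpha$ approaches the imaginary axis, where $\omega^2+\alpha^2$ degenerates and several of the intermediate logarithms can become singular.
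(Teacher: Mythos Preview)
Your overall strategy---substitute $s=\tfrac12-k\alpha$ into (\ref{gk.def}), reflect the left-half-plane Gamma factors, and apply Stirling---is exactly the paper's approach. But there is a concrete error in your execution.

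You claim that the four Gamma factors from $\beta_k$ have arguments $\tfrac34\pm\tfrac12 k\alpha\pm\tfrac12 ik\omega$ ``all of which lie in a half-plane where Stirling applies uniformly.'' This is false for the two factors with $-\tfrac12 k\alpha$. When $\arg\alpha$ is close to $0$ and $|\alpha|$ is large, the argument
\[
z_{\pm}=\tfrac34-\tfrac12 k\alpha\pm\tfrac12 ik\omega
\]
satisfies $|\arg z_\pm|=\pi-\arctan(\omega/|\alpha|)+o(1)\to\pi$, so Stirling fails to be uniform there. These two factors must also be handled via reflection, exactly as you did for $\Gamma(-k\alpha)$. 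The paper does this implicitly by grouping the denominator as $\Gamma(-k\alpha)\beta_k(\tfrac32-k\alpha)$ and applying the reflected Stirling bound $\log|\Gamma(z)|=\re[(z-\tfrac12)\log(-z)-z]-\pi|\im z|+O(1)$ to all three left-half-plane arguments simultaneously.

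This is also where your account of the $[\re\phi_0]_-$ term goes wrong. With all three reflections in place, the combined sine contribution to the logarithm is
\[
\pi k\,\im\alpha-\pi|\im z_+|-\pi|\im z_-|
=\begin{cases}
\pi k(\im\alpha-\omega)=2k\re\phi_0 & \text{if }\im\alpha<\omega,\\
0 & \text{if }\im\alpha>\omega,
\end{cases}
\]
which is exactly $2k[\re\phi_0]_-$. So the negative-part structure is not produced by absorbing $2k[\re\phi_0]_+$ into Stirling remainders---it comes directly from the interplay of the three $\pi|\im z|$ terms. Finally, your stated concern about uniformity ``as $\alpha$ approaches the imaginary axis'' is misplaced: the apparent singularities in $\gamma(\alpha)$ at $\alpha=i\omega$ cancel, and the genuine uniformity issue is near the \emph{real} axis, precisely the regime your current argument does not cover.
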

\begin{proof}
Consider the matrix element (\ref{gk.def}).
For $\re \alpha \ge 0$, we can apply Stirling's formula directly to obtain
$$
\log \Gamma(k\alpha) \beta_k(\tfrac32 + k\alpha) = k\gamma(\alpha) - 
\tfrac12 \log \pi k^2 \alpha \sqrt{\omega^2 + \alpha^2} +  O(|k\alpha|^{-1}),
$$
To estimate the other term, we must avoid zeros and poles.  For $\re z \le 0$,
applying Stirling via the reflection formula gives
$$
\log |\Gamma(z)| \le \re\bigl[(z-\tfrac12) \log (-z) - z\bigr] - \pi |\im z| + \log \bigl[1+ \dist(z, -\bbN_0)^{-1}\bigr] + O(1),
$$
and
$$
\log |\Gamma(z)| \ge \re\bigl[(z-\tfrac12) \log (-z) - z\bigr] - \pi |\im z| + O(1).
$$
If we assume that $\dist(k\alpha, \bbN_0) \ge \delta$, then we obtain the upper bound
$$
\log |\Gamma(-k\alpha) \beta_k(\tfrac32 - k\alpha)| \le - k\re \gamma(\alpha) - 2k [\re \phi_0]_-
- \tfrac12 \log k^2 \alpha \sqrt{\omega^2 + \alpha^2} + C(\delta).
$$
For a lower bound, we need to assume that $\dist(k\alpha, \calR_{F_\ell}) \ge |k\alpha|^{-\beta}$,
and then we find that
$$
\log |\Gamma(-k\alpha) \beta_k(\tfrac32 - k\alpha)| \ge - k\re \gamma(\alpha) - 2k [\re \phi_0]_-
- \tfrac12 \log k^2 \alpha \sqrt{\omega^2 + \alpha^2} - C(\beta) \log |k\alpha|.
$$
\end{proof}

\begin{lemma}\label{lk.lemma}
Assuming that $\re \alpha \ge 0$, $k>0$, 
$\dist( \tfrac12 - k\alpha, \calR_{F_\ell}) \le |k\alpha|^{-\beta}$, 
we have
$$
\log \lambda_k(\tfrac12+k\alpha) \le  2k \re\phi(\alpha; r_3)-2k[\re \phi_0(\alpha) ]_+ + O(\log |k\alpha|).
$$
\end{lemma}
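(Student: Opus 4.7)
I would combine three inputs. First, the pointwise singular-value estimate (\ref{lk.est1}), which upon taking logarithms reduces the task to bounding $2\log|a_k(\tfrac12+k\alpha;r_3)|+\log|[S_{F_\ell}(\tfrac12-k\alpha)]_k|$ by $2k\re\phi(\alpha;r_3)-2k[\re\phi_0(\alpha)]_++O(\log|k\alpha|)$, the trivial factor $\log|2k\alpha\cosh r_3|$ being absorbed into the $O(\log|k\alpha|)$ error since $r_3=r_0+3\eta$ is uniformly bounded. Second, Lemma~\ref{gb.lemma} immediately controls the scattering-matrix term by $2k\re\gamma(\alpha)+2k[\re\phi_0(\alpha)]_-+O(\log|k\alpha|)$.

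The core is the estimate on $|a_k(\tfrac12+k\alpha;r_3)|$. I would write $a_k(s;r)=c_k(s)\,w^-_k(s;r)$ and expand the odd Dirichlet solution in the Liouville-Green basis $\{w_0,w_1\}$ from Proposition~\ref{wsig.prop}. Matching the $\rho^{1/2\pm k\alpha}$ coefficients from Lemma~\ref{wsig.rlim} against the scattering-type expansion $a_k(s;r)\sim(2k\alpha)^{-1}\bigl[\rho^{1/2-k\alpha}+[S_{F_\ell}(s)]_k\,\rho^{1/2+k\alpha}\bigr]$ yields
\[
a_k=A\,w_0+B\,w_1,\quad A=\frac{e^{k(\phi_0+\gamma)}\bigl([S_{F_\ell}(s)]_k-i\bigr)}{2k\alpha^{1/2}},\quad B=\frac{e^{-k(\phi_0+\gamma)}}{2k\alpha^{1/2}}.
\]
Inserting $|w_\sigma(r_3)|\le Ck^{1/6}\exp\bigl[(-1)^{\sigma+1}k\re\phi(\alpha;r_3)\bigr]$ from Proposition~\ref{wsig.prop}, using the reciprocal identity $[S_{F_\ell}(s)]_k[S_{F_\ell}(1-s)]_k=1$ to re-express $|A|^2$, and multiplying by the upper bound for $|[S_{F_\ell}(1-s)]_k|$, the dominant exponential reduces to $\exp\bigl[2k\re\phi(\alpha;r_3)+2k([\re\phi_0]_--\re\phi_0)+O(\log|k\alpha|)\bigr]$. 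When $\re\phi_0\ge 0$ the identity $[\re\phi_0]_--\re\phi_0=-[\re\phi_0]_+$ delivers the target at once.

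The main obstacle is the regime $\re\phi_0<0$. There the direct pairing of $|Bw_1|^2$ with the upper bound for $|[S_{F_\ell}(1-s)]_k|$ apparently overshoots the target by $4k[\re\phi_0]_-$. To recover the sharp constant $-2k[\re\phi_0]_+$ one has to use the non-resonance hypothesis on $\dist(\tfrac12-k\alpha,\calR_{F_\ell})$ to keep $|[S_{F_\ell}(s)]_k|$ polynomially controlled, and exploit the Dirichlet constraint $w^-_k(s;0)=0$ to produce a cancellation between $A\,w_0$ and $B\,w_1$ near the turning point that absorbs the spurious factor. This step parallels the corresponding estimate in \cite[Prop.~5.4]{Borthwick:2009} for the hyperbolic-planar case, and is the technical heart of the argument.
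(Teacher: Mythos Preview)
Your overall strategy---reduce via (\ref{lk.est1}), control the scattering-matrix factor by Lemma~\ref{gb.lemma}, and estimate $|a_k|$ through the Liouville--Green basis $\{w_0,w_1\}$---matches the paper's. But two points in your execution go astray, and the paper's route is considerably shorter.

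First, the ``main obstacle'' you identify is a miscalculation. With the paper's convention $[x]_+ + [x]_- = x$ (so $[x]_-\le 0$), the $Bw_1$ contribution gives
\[
2\log|Bw_1(r_3)| + \log\bigl|[S_{F_\ell}(\tfrac12-k\alpha)]_k\bigr|
\le 2k\re[\phi(r_3)-\phi_0-\gamma] + 2k\re\gamma + 2k[\re\phi_0]_- + O(\log|k\alpha|),
\]
and $-\re\phi_0 + [\re\phi_0]_- = -[\re\phi_0]_+$, which is exactly the target. There is no overshoot from $Bw_1$ in either sign regime for $\re\phi_0$.

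Second, the genuine complication in your setup is the $Aw_0$ term: your formula $A=(2k\alpha^{1/2})^{-1}e^{k(\phi_0+\gamma)}([S_{F_\ell}(s)]_k-i)$ drags in $[S_{F_\ell}(s)]_k$, and bounding it via the reciprocal identity requires the \emph{lower} bound in Lemma~\ref{gb.lemma}, which in turn needs $\dist(k\alpha,\bbN_0)\ge\delta$---a hypothesis not available here. The paper sidesteps this entirely by invoking the Dirichlet condition \emph{at the outset} rather than as a repair: since $a_k(\tfrac12+k\alpha;0)=0$, one has $a_k\propto w_0(0)w_1(r)-w_1(0)w_0(r)$, and matching only the $\rho^{1/2-k\alpha}$ coefficient fixes the constant. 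Then the ``$A$'' piece is simply $-Bw_1(0)/w_0(0)$, and the ratio $|w_1(0)/w_0(0)|\le Ck^{1/6}e^{2k\re\phi_0}$ follows directly from (\ref{wsig.upper}) and the lower bound (\ref{w0.lower})---no scattering-matrix estimate needed. This yields the uniform bound
\[
|a_k(\tfrac12+k\alpha;r)| \le Ck^{1/6}e^{k\re[\phi(\alpha,r)-\phi_0(\alpha)-\gamma(\alpha)]}
\]
in one stroke, after which the lemma follows by substitution into (\ref{lk.est1}) together with the \emph{upper} bound of Lemma~\ref{gb.lemma} only. Your final paragraph in fact points to the right idea; the point is to use it first, not last.
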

\begin{proof}
By conjugation we can assume $\arg\alpha \in [0,\tfrac{\pi}2]$.
Then $a_k(\tfrac12+k\alpha; r)$ can be expressed in terms of the solutions $w_\sigma$
from Proposition~\ref{wsig.prop}.  To satisfy the Dirichlet boundary condition,
it must be a $w_0(0) w_1(r) - w_1(0) w_0(r)$.  Lemma~\ref{wsig.rlim} gives
the asymptotic behavior of this expression as $r \to \infty$.  After 
comparing to (\ref{coeff.rho}), we find that
\begin{equation}\label{ck.def}
a_k(\tfrac12+k\alpha; r) = \frac{1}{2k w_0(0)} \alpha^{-\frac12} 
e^{-k(\phi_0(\alpha)+\gamma(\alpha))} \bigl[ w_0(0) w_1(r) - w_1(0) w_0(r) \bigr]
\end{equation}
The estimate 
\begin{equation}\label{ak.bound}
|a_k(\tfrac12 + k\alpha; r)| \le C k^{\frac16}
e^{k\re[\phi(\alpha, r)-\phi_0(\alpha)-\gamma(\alpha)]},
\end{equation}
for $|k\alpha|$ sufficiently large,
then follows immediately from (\ref{wsig.upper}) and (\ref{w0.lower}).
The result now follows from applying Lemma~\ref{gb.lemma} and (\ref{ak.bound}) in (\ref{lk.est1}).
\end{proof}

\bigbreak
\begin{proposition}\label{detG.prop}
Assuming that $\eta \le 1$, $0 \le \theta \le \tfrac{\pi}2$, 
and $\dist(\frac12 - ae^{i\theta}, \calR_{F_\ell}) \ge a^{-\beta}$
for some fixed $\beta>1$, we have
$$
\log \det\Bigl( I + c\>\bigl|G(\tfrac12 + ae^{i\theta})\bigr|\Bigr) \le \kappa(\theta, r_4) a^2 + C(c, r_0, \beta) a \log a,
$$
where
\begin{equation}\label{b.def}
\kappa(\theta, r) = 2 \int_0^{\infty} \frac{[I( xe^{i\theta},\ell, r)]_+}{x^3}\>dx - \frac12 \ell \sin^2\theta,
\end{equation}
with $I(xe^{i\theta},\ell, r) := 2\re \phi(xe^{i\theta}; r)$, which agrees with the definition (\ref{Idef}).
\end{proposition}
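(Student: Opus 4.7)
The plan is to diagonalize $G(s)$ via the Fourier decomposition of the Poisson kernel in (\ref{EF.modes}). The nonzero singular values of $G(s)$ are precisely the numbers $\lambda_k(s)$ from (\ref{laml.def}) indexed by $k\in\bbZ$, so
\begin{equation*}
\log\det\bigl(I+c\,|G(s)|\bigr) = \sum_{k\in\bbZ}\log\bigl(1+c\lambda_k(s)\bigr).
\end{equation*}
Using the elementary inequality $\log(1+y)\le[\log y]_+ + y$ for $y\ge 0$, together with the exponential decay of $\lambda_k$ in $|k|$ past $|k|\sim a$ (which makes $\sum_k\lambda_k$ converge with contribution $O(1)$), the estimate reduces to bounding $\sum_k[\log(c\lambda_k)]_+$.

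Next I apply Lemma~\ref{lk.lemma} with $\alpha_k:=(a/k)e^{i\theta}$, chosen so that $k\alpha_k=ae^{i\theta}$; the hypothesis $\dist(\tfrac12-ae^{i\theta},\calR_{F_\ell})\ge a^{-\beta}$ then becomes exactly the lemma's assumption. Since $\partial_r\re\phi(\alpha;r)\ge 0$ for $\re\alpha\ge 0$, a short case split on the sign of $\re\phi_0(\alpha_k)$ sharpens the lemma's bound to
\begin{equation*}
[\log\lambda_k(\tfrac12+ae^{i\theta})]_+\le 2k\,[\re\phi(\alpha_k;r_3)]_+ - 2k\,[\re\phi_0(\alpha_k)]_+ + C(\beta)\log a.
\end{equation*}
Invoking the symmetry $\lambda_{-k}=\lambda_k$ to double the sum over $k\ge 1$ and substituting $x=a/k$ (so that $dk\sim -a\,x^{-2}\,dx$ and $2k\sim 2a/x$), the positive-$\phi$ sum becomes a Riemann sum for
\begin{equation*}
2\sum_{k\ge 1}2k\,[\re\phi(\alpha_k;r_3)]_+ \approx 4a^2\!\int_0^\infty\frac{[\re\phi(xe^{i\theta};r_3)]_+}{x^3}\,dx = 2a^2\!\int_0^\infty\frac{[I(xe^{i\theta},\ell,r_3)]_+}{x^3}\,dx,
\end{equation*}
while the $\phi_0$ sum, by direct calculation from $\re\phi_0(\alpha)=\tfrac{\pi}{2}(\im\alpha-\omega)$ and $\omega=2\pi/\ell$, satisfies
\begin{equation*}
2\sum_{k\ge 1}2k\,[\re\phi_0(\alpha_k)]_+ \approx 4a^2\!\int_0^\infty\frac{[\re\phi_0(xe^{i\theta})]_+}{x^3}\,dx = \tfrac12\,\ell\sin^2\theta\cdot a^2.
\end{equation*}
Subtracting the second from the first produces exactly $\kappa(\theta,r_3)\,a^2$, and the replacement of $r_3$ by $r_4$ in the statement is absorbed in the error using the uniform continuity of $r\mapsto\kappa(\theta,r)$.

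The main obstacle is controlling the Riemann-sum-to-integral error uniformly in $\theta\in[0,\tfrac{\pi}{2}]$: the integrands $[\re\phi(xe^{i\theta};r_3)]_+$ and $[\re\phi_0(xe^{i\theta})]_+$ are only piecewise smooth, with corners where the real parts change sign. I would isolate small neighborhoods of those corners (where the integrands vanish to first order, so the Riemann-sum error is harmless), and on the smooth intervals use a trapezoidal-type estimate with mesh $|\alpha_k-\alpha_{k+1}|\asymp a/k^2$ and derivative bounds on $\phi$ from (\ref{phi.def}). The resulting aggregate error is $O(a\log a)$. The very-small-$k$ regime (where $\alpha_k$ is large) is handled directly via the bound $|a_k(s;r_3)|\le Ce^{|k\alpha|r_3}$ coming from the hypergeometric formulas, together with Stirling applied to Lemma~\ref{gb.lemma}, contributing only $O(a)$ to the final sum.
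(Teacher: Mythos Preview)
Your overall strategy---diagonalize via Fourier modes, apply Lemma~\ref{lk.lemma}, and convert the $k$-sum to an $x$-integral via $x=a/k$---matches the paper's. But there is a genuine gap at the start.

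You write $\log(1+y)\le[\log y]_+ + y$ and then claim that $\sum_k c\lambda_k = O(1)$, citing exponential decay ``past $|k|\sim a$.'' The decay for large $|k|$ is real, but for small $|k|$ the singular values are exponentially \emph{large} in $a$. From Lemma~\ref{lk.lemma} and the asymptotic $\phi(\alpha;r)\sim\alpha r+\phi_0(\alpha)$ for large $|\alpha|$, one finds $\log\lambda_k(\tfrac12+ae^{i\theta})\approx 2ar_3\cos\theta$ when $k=O(1)$, so already $\lambda_1\sim e^{2ar_3\cos\theta}$. Thus $\sum_k c\lambda_k$ is certainly not $O(1)$; the inequality $\log(1+y)\le[\log y]_+ + y$ is useless when $y$ is large, since the $y$ term swamps everything.

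The fix is to split the sum according to the size of $c\lambda_k$ (equivalently, the sign of $\re\phi(\alpha_k;r_3)$): where $c\lambda_k\ge 1$ use $\log(1+y)\le\log 2+\log y$, and where $c\lambda_k<1$ use $\log(1+y)\le y$. This is exactly the paper's $\Sigma_+,\Sigma_0,\Sigma_-$ decomposition, with the threshold $k\approx a/\varrho(\theta)$ determined by $\re\phi(\varrho(\theta)e^{i\theta};r_3)=0$. The $\Sigma_-$ sum is then genuinely $O(e^{-ca})$ by exponential decay, and the $\Sigma_+$ sum is controlled by your Riemann-sum argument. The paper handles the Riemann-sum error more simply than you propose, by observing that the summand $2k(\re\phi-[\re\phi_0]_+)$ is monotone in $k$, so the sum is bounded above by the integral plus one endpoint term of size $O(a)$; no piecewise-smooth/trapezoidal analysis is needed. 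A thin transition strip $\Sigma_0$ of width $\delta$ near $\varrho(\theta)$ contributes $O(\delta a^2)$, which the paper absorbs by passing from $r_3$ to $r_4$ (using that $\kappa(\theta,\cdot)$ is strictly increasing)---this, not ``uniform continuity,'' is the reason the statement has $r_4$.
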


\begin{proof}
We start from the expression for the determinant in terms of the singular values,
$$
\det(I + c\>|G(\tfrac12 + ae^{i\theta})|) = \prod_{k\in\bbZ} (1+ c\lambda_k(\tfrac12 + ae^{i\theta})).
$$
By the conjugation symmetry, we can assume $\theta \in [0, \tfrac{\pi}2]$.
Let $\varrho(\theta)$ be the implicit solution of the equation $\re\phi(\varrho(\theta) e^{i\theta}, r_3) = 0$,
as shown in Figure~\ref{phicontour}.
\begin{figure} 
\psfrag{iw}{$i\omega$}
\psfrag{iwcosh2r}{$\frac{i\omega}{\cosh^2 r}$}
\psfrag{rp0}{$\re\phi_0 > 0$}
\psfrag{rpp}{$\re\phi >0 >\re\phi_0$}
\psfrag{rpm}{$\re\phi <0$}
\psfrag{arho}{$\alpha = \varrho(\theta) e^{i\theta}$}
\begin{center}  
\includegraphics{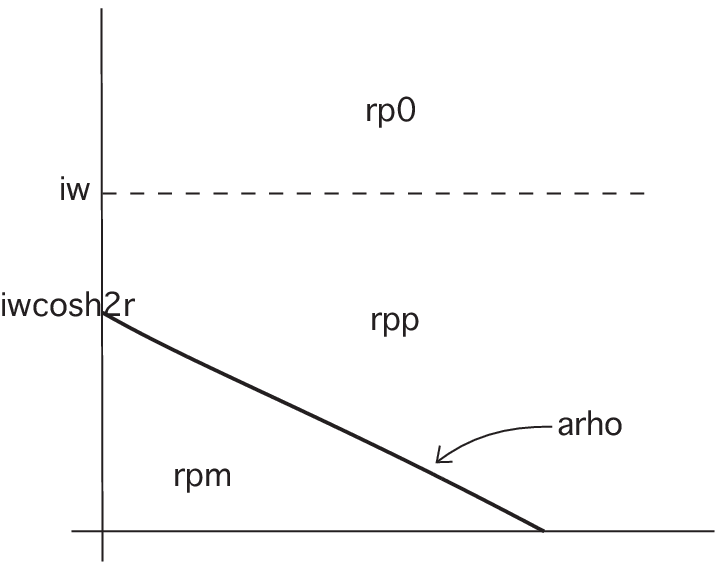} 
\end{center}
\caption{Positive and negative regions for $\re\phi(\alpha; r)$ and $\re\phi_0(\alpha)$, shown
for $r = 1$.}\label{phicontour}
\end{figure}
Note that $\re \phi_0(xe^{i\theta}) = 0$ in a neighborhood of $x = \varrho(\theta)$.
For some $\delta>0$, we subdivide the sum in
$$
\log \det\Bigl( I + c\>\bigl|G(\tfrac12 + a e^{i\theta})\bigr|\Bigr) 
= 2\sum_{k=1}^\infty \log ( 1 + c\lambda_k(\tfrac12 + a_ie^{i\theta})) + O(a\log a)
$$
at values where $a_i/k = \varrho(\theta)$ and $(1-\delta)\varrho(\theta)$.  
The dominant part of the sum is
$$
\Sigma_+ := \sum_{1 \le k \le a/\varrho(\theta)} \log ( 1 + c\lambda_k(\tfrac12 + ae^{i\theta})).
$$
Assuming that $a \in \{a_i\}$, Lemma~\ref{lk.lemma} gives the bound
$$
\Sigma_+ \le \sum_{1 \le k \le a/\varrho(\theta)} 2k \Bigl( \re \phi \bigl( ae^{i\theta}/k; r_3\bigr) 
- \bigl[ \re \phi_0 \bigl( ae^{i\theta}/k \bigr)\bigr]_+ \Bigr) + C(c, r_0, \beta) a\log a.
$$
Because the summand is a decreasing function of $k$, we may estimate the sum by the integral
$$
\Sigma_+ \le  \int_0^{\frac{a}{\varrho(\theta)}} 2k \Bigl( \re \phi \bigl( ae^{i\theta}/k; r_3\bigr) 
- \bigl[ \re \phi_0 \bigl( ae^{i\theta}/k \bigr)\bigr]_+ \Bigr)  + C(c, r_0, \beta) a\log a
$$
Substituting $x = a/k$ gives
$$
\int_0^{\frac{a}{\varrho(\theta)}} 2k \re \phi \bigl( ae^{i\theta}/k; r_3\bigr) \>dk
=  2a^2 \int_{\varrho(\theta)}^{\infty} \frac{\re\phi(xe^{i\theta}; r_3)}{x^3}\>dx.
$$
We can also compute that
\[
\begin{split}
\int_0^{\frac{a}{\varrho(\theta)}} 2k \bigl[ \re \phi_0 \bigl( ae^{i\theta}/k \bigr)\bigr]_+ \>dk
& =  \pi a^2 \int_{\omega/\sin(\theta)}^{\infty} \frac{x\sin \theta - \omega}{x^3}\>dx, \\
& = \frac{\pi a^2}{2\omega} \sin\theta.
\end{split}
\]
Comparing to (\ref{b.def}), we conclude that
$$
\Sigma_+ \le \kappa(\theta, r_3)a^2 + C(c, r_0, \beta) a \log a.
$$

The middle term is given by
$$
\Sigma_0 := \sum_{a/\varrho(\theta) \le k \le a/(1-\delta)\varrho(\theta)} \log ( 1 + c\lambda_k(\tfrac12 + ae^{i\theta})),
$$
Since $I(\alpha,\ell, r_3) = O(\delta)$ for $k$ in this range, the same integral estimate used for $\Sigma_+$
gives
$$
|\Sigma_0| \le C(c, r_0, \beta) \delta a^2 + C(c, r_0, \beta) a \log a.
$$

Finally, we set
$$
\Sigma_- := \sum_{k \ge a/(1-\delta)\varrho(\theta)}  \log ( 1 + c\lambda_k(\tfrac12 + ae^{i\theta})).
$$
For $k$ in this range, $I(\alpha,\ell, r_3) \le - C \delta$ and we can estimate
$$
|\Sigma_-| \le C(c, r_0, \beta, \delta) e^{-ca},
$$
for some $c>0$.

Adding together the estimates for $\Sigma_+$, $\Sigma_0$, and $\Sigma_-$ gives
$$
\log \det\Bigl( I + C\>\bigl|G(\tfrac12 + a e^{i\theta})\bigr|\Bigr) 
\le \kappa(\theta, r_3) a^2 + C(c, r_0, \beta) [\delta a^2 + a \log a] + C(c, r_0, \beta, \delta) e^{-ca}
$$
We can absorb the $\delta a^2$ term into the first term by replacing $r_3$ by $r_4$, 
assuming that $\eta = O(\delta)$, since $\kappa(\theta, \cdot)$ is strictly increasing.  
This yields the claimed estimate.
\end{proof}

\section{Resonance asymptotics for truncated funnels}\label{trfun.sec}

Inside the model funnel $F_\ell$, with metric given by (\ref{fun.metric}), we let  
$F_{\ell, r_0}$ denote the truncated region $\{r \ge r_0\}$, with the Laplacian 
defined by imposing Dirichlet boundary conditions at $r = r_0$.  To compute 
the associated scattering matrix elements exactly, we consider the solutions of the Fourier 
mode equation (\ref{kmode.eq}) given by (\ref{wp.def}) and (\ref{wm.def}).  
To impose the boundary condition at $r=r_0$, we set
\begin{equation}\label{uk.tr}
u_k(s;r) := w^+_k(s; r_0) w^-_k(s;r) -  w^-_k(s; r_0) w^+_k(s;r).
\end{equation}
The scattering matrix element may be obtained from the asymptotics of $u_k(s;r)$ as 
$r\to \infty$ be noting that for any generalized eigenmode we have
\begin{equation}\label{uk.asymp}
u_k(s;r) \sim c_{k,s} \Bigl( \rho^{1-s} + [S_{F_{\ell, r_0}}(s)]_k \rho^s \Bigr),
\end{equation}
as $r \to \infty$, where $\rho := 2e^{-r}$ as before.
The solutions $w^\pm_k$ have leading asymptotics,
\begin{equation}\label{wpm.rinf}
\begin{split}
w^+_k(s; r) &\sim \Gamma(s-\tfrac12) \beta_k(s) \rho^{1-s} + \Gamma(\tfrac12-s) \beta_k(1-s) \rho^{s}, \\
w^-_k(s; r) &\sim \Gamma(s-\tfrac12) \beta_k(1+s) \rho^{1-s} + \Gamma(\tfrac12-s) \beta_k(2-s) \rho^{s},
\end{split}
\end{equation}
as $r\to \infty$, where $\beta_k(s)$ was defined in (\ref{beta.def}). 

If we set
\begin{equation}\label{fk.def}
f_k(s; r) :=  \Gamma(s-\tfrac12) \Bigl[ \beta_k(1+s)  w^+_k(s;r) - \beta_k(s) w^-_k(s;r) \Bigr],
\end{equation} 
Then from (\ref{uk.asymp}) we can read off that
\begin{equation}\label{SFlr}
\bigl[S_{F_{\ell, r_0}}(s)\bigr]_k =  \frac{f_k(1-s; r_0)}{f_k(s; r_0)}.
\end{equation}
The $k$-th Fourier mode thus contributes scattering poles at the values of $s$ for which
$$
\beta_k(1+s)  w^+_k(s;r_0) - \beta_k(s) w^-_k(s;r_0) = 0.
$$  
This function can be written in terms of a single 
normalized hypergeometric function, via the standard identities, yielding
$$
\calR_{F_{\ell, r_0}} = \bigcup_{k\in \bbZ} \Bigl\{s:\> \bF(\tfrac{1+s+i\omega k}{2}, \tfrac{s+i\omega k}{2};
\tfrac12+s; -\sinh^{-2}r_0) = 0 \Bigr\}.
$$
A sample resonance counting function is shown in Figure~\ref{Nplot}.

\begin{theorem}\label{trfun.thm}
For the truncated funnel with Dirichlet boundary conditions,
$$
N_{F_{\ell.r_0}}(t) \sim A(F_{\ell.r_0}) t^2,
$$
where $A(F_{\ell.r_0})$ is given by (\ref{A.f}).
\end{theorem}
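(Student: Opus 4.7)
The plan is to combine the upper bound provided by Theorem~\ref{main.thm} with a matching lower bound obtained by direct mode-by-mode counting of the zeros of $f_k(s;r_0)$, using the asymptotic analysis developed in Section~\ref{fmode.sec}. For the upper bound I would apply Theorem~\ref{main.thm} to $X = F_{\ell,r_0}$ itself, treated as a surface with a thin core $\{r_0 \le r \le r_0+\epsilon\}$ and a single funnel end $[r_0+\epsilon,\infty)\times S^1$. As $\epsilon \downarrow 0$ the combined constant $\tfrac{1}{2\pi}\vol(X_\rc) + A(F_{\ell,r_0+\epsilon})$ converges to $A(F_{\ell,r_0})$, giving $\tN_{F_{\ell,r_0}}(a) \le A(F_{\ell,r_0})\,a^2 + o(a^2)$. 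By the equivalence $\tN(a) \sim Ba^2 \iff N(t) \sim Bt^2$ noted in Section~\ref{intro.sec}, it only remains to establish the matching lower bound.

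For the lower bound I would count resonances one Fourier mode at a time, starting from the explicit description $\calR_{F_{\ell,r_0}} = \bigcup_k \{s : f_k(s;r_0) = 0\}$. Writing $s = \tfrac12 + k\alpha$, one can express the fundamental solutions $w_k^\pm$ in terms of the WKB pair $w_0, w_1$ of Proposition~\ref{wsig.prop} and Lemma~\ref{wsig.rlim}, producing an expansion
$$
f_k(\tfrac12 + k\alpha;\,r_0) \sim C(k,\alpha)\bigl(e^{k\phi(\alpha;r_0)} + R(k,\alpha)\,e^{-k\phi(\alpha;r_0)}\bigr)
$$
in the propagation regime, with a slowly varying reflection factor $R$ to be computed from the connection formulae. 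Applying the argument principle to this quasi-oscillatory expression on the disk $\{|\alpha| \le t/k\}$ gives, to leading order in $k$, a mode count $N_k(t) = \tfrac{k}{\pi}\oint d(\im\phi) + O(\log k)$, the integral running along the portion of the boundary inside the propagation region $\re\phi \ge 0$. Summing over $k$ and converting to a Riemann sum in $x = t/k$ and $\theta = \arg\alpha$ reproduces precisely the double integral of $[I(xe^{i\theta},\ell,r_0)]_+/x^3$ appearing in (\ref{A.f}), while the shift term $-\tfrac{\ell}{2\pi}\sinh r_0$ arises via the relative counting formula in Corollary~\ref{relcount}, from the background count $\tN_0(a)$ attached to $\calR_{F_\ell}$ together with the $0$-volume contribution supplied by Theorem~\ref{scphase.thm}.

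The principal obstacle is keeping the mode-by-mode zero count sharp enough to reproduce $A(F_{\ell,r_0})$ exactly. Three regimes require individual treatment: the turning-point zone $|\alpha - i\omega/\cosh r_0| = O(k^{-2/3})$, where the exponential WKB must be replaced by the Airy approximation controlled by Lemma~\ref{hsig.bound}; the low-frequency window $|k\alpha| = O(1)$, where one falls back on the explicit hypergeometric form of $f_k$ and shows that the zero count per mode is bounded; and the transition curve $\re\phi(\alpha;r_0) = 0$ separating propagating from evanescent modes, where a Lindel\"of-type argument in the spirit of Lemma~\ref{lind.lemma} is needed to control the density of marginal zeros. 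In each regime one has to verify that the total contribution is $o(t^2)$, so that only the clean oscillatory WKB regime survives to leading order, after which the Riemann-sum limit can be taken uniformly in $k$ and the lower bound matches the upper bound to within $o(a^2)$.
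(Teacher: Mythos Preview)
Your upper-bound argument is essentially the paper's: the limit $\epsilon\to 0$ in the core width is exactly the $\eta\to 0$ step already built into the proof of Theorem~\ref{tau.int.thm}, so citing Theorem~\ref{main.thm} here is at best a cosmetic shortcut (and mildly circular, since the constant $A(Y_j)$ in the \emph{statement} of Theorem~\ref{main.thm} is defined as the asymptotic you are trying to establish; you should invoke Corollary~\ref{relcount}, Theorem~\ref{scphase.thm}, and Theorem~\ref{tau.int.thm} directly).

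The lower bound is where your plan diverges from the paper and runs into trouble. You propose to count the zeros of $f_k(s;r_0)$ directly via the argument principle, obtain the double integral in (\ref{A.f}) from that count, and then pull the shift term $-\tfrac{\ell}{2\pi}\sinh r_0$ from Corollary~\ref{relcount}. But these two mechanisms are not additive: the zeros of $f_k$ \emph{are} the mode-$k$ resonances, so a correct direct count already sums to $N_{F_{\ell,r_0}}(t)\sim A(F_{\ell,r_0})t^2$, shift term included. If your argument-principle computation really yields only the double integral, then it is overcounting by $\tfrac{\ell}{2\pi}\sinh r_0\cdot t^2$, and invoking Corollary~\ref{relcount} afterwards does not repair this. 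Concretely, your formula $N_k(t)=\tfrac{k}{\pi}\oint d(\im\phi)$ along $\{\re\phi\ge 0\}$ ignores the family of real-axis resonances that sit near the poles of $[S_{F_\ell}(s)]_k$ (see the discussion after Lemma~\ref{SFlr.est}); these are precisely what produce the shift, and they are not captured by the phase variation of $\phi$ on the propagation arc.

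The paper does not attempt a direct zero count. It stays inside the relative counting formula throughout and reduces the lower bound to showing
\[
\frac{2}{\pi}\int_0^{\pi/2}\log\bigl|\tau(\tfrac12+ae^{i\theta})\bigr|\,d\theta
\;\ge\;
\frac{4a^2}{\pi}\int_0^{\pi/2}\!\!\int_0^\infty\frac{[I(xe^{i\theta},\ell,r_0)]_+}{x^3}\,dx\,d\theta
-\frac{\ell}{4}a^2 - o(a^2).
\]
This is done mode-by-mode on $\log|\tau|=\sum_k\log\bigl|[S_{F_{\ell,r_0}}]_k/[S_{F_\ell}]_k\bigr|$: Lemma~\ref{SFlr.est} gives a pointwise lower bound on the ratio via the representation (\ref{Str.w01}) in terms of $w_0,w_1$, the sum over $k$ is split at the curve $\re\phi=0$ exactly as in Proposition~\ref{detG.prop}, and the narrow strip around $\re\phi=0$ (where zeros of the ratio can occur) is handled crudely by Lemma~\ref{SFlr.mm}. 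The shift term and the $\ell/4$ then come out cleanly from the $\tN_0$ and scattering-phase pieces of Corollary~\ref{relcount}, with no double counting. If you want to salvage your approach, you would need either to redo the argument-principle count so that it genuinely produces all of $A(F_{\ell,r_0})$ from the zeros of $f_k$ alone, or to abandon the zero count and estimate $\log|\tau|$ from below as the paper does.
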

In conjunction with \cite[Thm.~1.2]{Borthwick:2009} for the hyperbolic planar case, this will complete the proof of
Theorem~\ref{Yj.asymp}.  Before giving the proof, we need some estimates of scattering matrix elements.

\begin{figure} 
\psfrag{N}{$N_{F_{\ell, r_0}}(t)$}
\psfrag{2}{$2$}
\psfrag{4}{$4$}
\psfrag{6}{$6$}
\psfrag{8}{$8$}
\psfrag{10}{$10$}
\psfrag{100}{$100$}
\psfrag{200}{$200$}
\psfrag{300}{$300$}
\begin{center}  
\includegraphics{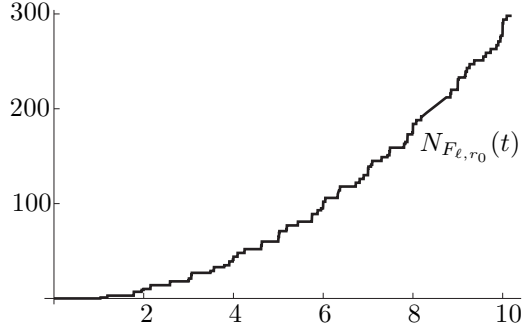} 
\end{center}
\caption{The resonance counting function for $F_{\ell, r_0}$, shown for $\ell = 2\pi$ and $r_0 = 1$.}\label{Nplot}
\end{figure}

\begin{lemma}\label{SFlr.est}
Assuming that $\arg \alpha \in [0, \tfrac{\pi}2-\vep]$ with $\dist(k\alpha, \bbN_0) 
\ge \eta$, we can have
$$
\log \left| \frac{[S_{F_{\ell, r_0}}(\tfrac12+k\alpha)]_k}{[S_{F_{\ell}}(\tfrac12+k\alpha)]_k} - 1 \right| 
\ge 2k \Bigl(\re\phi(\alpha; r_0) - [\re \phi_0(\alpha)]_+\Bigr) - C(\eta), 
$$
for $|k\alpha|$ sufficiently large.
\end{lemma}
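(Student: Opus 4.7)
The plan is to reduce the ratio to an explicit formula via the Jost-function formulation, then estimate each resulting piece using the WKB bounds of Section~\ref{fmode.sec} together with Lemma~\ref{gb.lemma}. Setting $\lambda := w^-_k(s;r_0)/w^+_k(s;r_0)$, a direct algebraic manipulation of (\ref{SFlr}), (\ref{fk.def}), and (\ref{gk.def}) produces the identity
$$\frac{[S_{F_{\ell,r_0}}(s)]_k}{[S_{F_\ell}(s)]_k} - 1 = -\,\frac{\lambda\,\Omega(s)}{\beta_k(2-s)\bigl[\beta_k(1+s) - \beta_k(s)\lambda\bigr]},$$
where $\Omega(s) := \beta_k(1+s)\beta_k(1-s) - \beta_k(s)\beta_k(2-s)$.

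Next I would expand $w^\pm_k = a_\pm w_0 + b_\pm w_1$ in terms of the WKB solutions of Proposition~\ref{wsig.prop}, determining the coefficients by matching the $r\to\infty$ asymptotics (\ref{wpm.rinf}) against those of Lemma~\ref{wsig.rlim}. Direct computation yields $b_-/b_+ = \beta_k(1+s)/\beta_k(s)$, and setting $\mu := w_0(r_0)/w_1(r_0)$ we obtain
$$\lambda = \frac{\beta_k(1+s)}{\beta_k(s)}\bigl[1 + \Delta\,\mu + O(\mu^2)\bigr], \qquad \Delta := \frac{a_-}{b_-} - \frac{a_+}{b_+},$$
so that the denominator $\beta_k(1+s)-\beta_k(s)\lambda = -\beta_k(1+s)\Delta\mu[1+O(\mu)]$ begins at order $\mu$. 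A further explicit evaluation of $\Delta$ from the matching formulas produces the crucial cancellation
$$\Delta = -\,e^{2k(\phi_0(\alpha)+\gamma(\alpha))}\,\frac{\Gamma(\tfrac12-s)}{\Gamma(s-\tfrac12)}\,\frac{\Omega(s)}{\beta_k(s)\beta_k(1+s)},$$
after which the $\Omega(s)$ and most $\beta_k,\Gamma$ factors cancel, simplifying to the clean identity
$$\frac{[S_{F_{\ell,r_0}}(s)]_k}{[S_{F_\ell}(s)]_k} - 1 = -\,\frac{e^{-2k(\phi_0(\alpha)+\gamma(\alpha))}}{[S_{F_\ell}(s)]_k\,\mu}\,\bigl[1 + O(\mu)\bigr].$$

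It now suffices to bound each factor from below in modulus. For $\arg\alpha \in [0,\tfrac\pi2-\vep]$ the sharp asymptotic (\ref{wsig.asymp}) gives $|\mu| \asymp e^{-2k\re\phi(\alpha;r_0)}$, so $-\log|\mu| = 2k\re\phi(\alpha;r_0) + O(\log|k\alpha|)$. Applying the functional equation $[S_{F_\ell}(s)]_k[S_{F_\ell}(1-s)]_k = 1$ lets us invoke the upper-bound half of Lemma~\ref{gb.lemma} at $\tfrac12 - k\alpha$, giving
$-\log|[S_{F_\ell}(s)]_k| \ge 2k\re\gamma(\alpha) + 2k[\re\phi_0(\alpha)]_- - C(\eta)$ when $\dist(k\alpha,\bbN_0)\ge\eta$. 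Combining these with the elementary inequality $-\re\phi_0(\alpha) + [\re\phi_0(\alpha)]_- \ge -[\re\phi_0(\alpha)]_+$ (equality when $\re\phi_0\ge 0$, trivial otherwise) assembles into the claimed lower bound.

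The main obstacle is the exact cancellation of leading terms in $\beta_k(1+s) - \beta_k(s)\lambda$: any crude WKB estimate would lose precisely the factor we are trying to extract, so the $O(\mu)$ correction in $\lambda$ must be tracked quantitatively. This is where the uniform lower bound (\ref{w0.lower}) is essential, and the restriction $\arg\alpha \le \tfrac\pi2 - \vep$ ensures that the two-sided asymptotic (\ref{wsig.asymp}) holds so that both $|w_0|$ and $|w_1|$ have matching upper and lower WKB bounds at $r_0$. The assumption $\dist(k\alpha,\bbN_0)\ge \eta$ is used in the Stirling step exactly as in Lemma~\ref{gb.lemma}, to stay bounded away from the poles of the relevant $\Gamma$-factor.
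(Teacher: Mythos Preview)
Your argument is essentially correct and lands on the same key identity as the paper, but you take a longer algebraic route and there is one notational sloppiness worth flagging.

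The paper avoids the $\Omega(s)$ machinery entirely by observing that $f_k(\tfrac12+k\alpha;r)$ is the recessive solution as $r\to\infty$, hence is \emph{exactly} a multiple of $w_0$. Writing $f_k(\tfrac12+k\alpha;r)=A_0^+w_0(r)$ and $f_k(\tfrac12-k\alpha;r)=A_0^-w_0(r)+A_1^-w_1(r)$, and evaluating the latter at $r=0$ to fix $A_0^-$, one immediately obtains the exact identity
\[
\frac{[S_{F_{\ell,r_0}}(\tfrac12+k\alpha)]_k}{[S_{F_\ell}(\tfrac12+k\alpha)]_k}-1
=-e^{-2k(\phi_0+\gamma)}\Bigl(\tfrac{w_1(r_0)}{w_0(r_0)}-\tfrac{w_1(0)}{w_0(0)}\Bigr)\,[S_{F_\ell}(\tfrac12-k\alpha)]_k,
\]
without ever expanding $w_k^\pm$ or introducing $\lambda,\Omega,\Delta$. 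Your route reaches the same formula (using $w_k^-(s;0)=0$ one sees $a_-/b_- = -w_1(0)/w_0(0)$, so your $(1+(a_-/b_-)\mu)$ is exactly the paper's $1-\mu\,w_1(0)/w_0(0)$), but through considerably more algebra.

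The place to be careful is your Taylor expansion $\lambda=(b_-/b_+)[1+\Delta\mu+O(\mu^2)]$. The hidden coefficients $a_\pm/b_\pm$ carry a factor $e^{2k(\phi_0+\gamma)}$ and are not $O(1)$, so ``$O(\mu)$'' and ``$O(\mu^2)$'' are misleading as written. In fact no expansion is needed: the exact computation gives $\beta_k(1+s)-\beta_k(s)\lambda=-\beta_k(1+s)\Delta\mu/[1+(a_+/b_+)\mu]$, and tracking this through yields the exact identity above. The correction you must control is then $|\mu\,w_1(0)/w_0(0)|\asymp e^{-2k\re(\phi(\alpha;r_0)-\phi_0(\alpha))}$, which is small precisely because $\re(\phi-\phi_0)>c(\vep,r_0)>0$ for $\arg\alpha\in[0,\tfrac{\pi}{2}-\vep]$ and $r_0>0$; this is the step the paper singles out in its estimate (\ref{w01.est}).

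One minor slip: you invoke ``the upper-bound half of Lemma~\ref{gb.lemma}'' to bound $-\log|[S_{F_\ell}(s)]_k|$ from below, but this is actually the \emph{lower}-bound half (applied to $[S_{F_\ell}(\tfrac12-k\alpha)]_k=1/[S_{F_\ell}(s)]_k$). The inequality you write is correct.
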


\begin{proof}
To estimate $[S_{F_{\ell, r_0}}(s)]_k$, as given in (\ref{SFlr}), we must connect $f_k$
to the solutions $w_\sigma$ introduced in (\ref{wsig.def}).  
Since $f_k(\tfrac12+k\alpha;r)$ is recessive as $r \to \infty$, this solution must be proportional to $w_0$.
From (\ref{wpm.rinf}), we can use the reflection formula for the Gamma function to see that
$$
f_k(\tfrac12+k\alpha; r) \sim \frac{\rho^s}{\pi k \alpha},
$$
as $r \to \infty$.
By comparing this to the asymptotic from Lemma~\ref{wsig.rlim}, we can see that
\begin{equation}\label{fk.w0}
f_k(\tfrac12+k\alpha; r)  = A^+_0 w_0(r).
\end{equation}
where
$$
A^+_0 :=   \frac{1}{\pi k\sqrt{\alpha}} e^{k(\phi_0 + \gamma)}.
$$
We may also $f_k(\tfrac12 - k\alpha; r)$ in terms of the $w_\sigma$,
\begin{equation}\label{fk.Aw}
f_k(\tfrac12 - k\alpha; r) = A^-_0w_0(r) + A^-_1w_1(r),
\end{equation}
for some $A^-_0$, $A^-_1$ that are independent of $r$ but do depend on $k$ and $\alpha$.
By (\ref{wpm.rinf}),
$$
f_k(\tfrac12-k\alpha; r) \sim - \frac{\rho^{1-s}}{\pi k\alpha},
$$
and so by Lemma~\ref{wsig.rlim} we have
\begin{equation}\label{A1.def}
A^-_1 =  - \pi^{-1}k^{-1} \alpha^{-\frac12} e^{-k(\phi_0 + \gamma)}.
\end{equation}
The other coefficient can then be computed by comparing values at $r=0$,
\begin{equation}\label{A0.def}
A^-_0 =  \frac{1}{w_0(0)} \left[ f_k(\tfrac12-k\alpha; 0) - A^-_1 w_1(0)\right].
\end{equation}

Using (\ref{fk.w0}) to relate $w_0(0)$ to $f_k(\tfrac12+k\alpha; 0)$, we can then deduce that
\begin{equation}\label{Str.Sw}
\bigl[S_{F_{\ell, r}}(\tfrac12+k\alpha)\bigr]_k  =  \bigl[S_{F_{\ell}}(\tfrac12+k\alpha)\bigr]_k
 - e^{-2k(\phi_0+\gamma)}\left( \frac{w_1(r)}{w_0(r)}
- \frac{w_1(0)}{w_0(0)} \right).
\end{equation}
Hence
\begin{equation}\label{Str.w01}
\frac{[S_{F_{\ell, r}}(\tfrac12+k\alpha)]_k}{[S_{F_{\ell}}(\tfrac12+k\alpha)]_k} - 1  =  
- e^{-2k(\phi_0+\gamma)}\left( \frac{w_1(r)}{w_0(r)} - \frac{w_1(0)}{w_0(0)} \right) 
[S_{F_{\ell}}(\tfrac12-k\alpha)]_k
\end{equation}

For $\arg \alpha \in [0, \tfrac{\pi}2 - \vep]$, we deduce from (\ref{wsig.asymp})
(using also the fact that $\re(\phi-\phi_0)> c(\vep, r))$ that
\begin{equation}\label{w01.est}
\left( \frac{w_1(r)}{w_0(r)} - \frac{w_1(0)}{w_0(0)} \right) = e^{2k\phi} (1 + O(|k\alpha|^{-1})).
\end{equation}
The result then follows from (\ref{Str.w01}) and the lower bound on $[S_{F_{\ell}}(\tfrac12-k\alpha)]_k$
provided by Lemma~\ref{gb.lemma}. 
\end{proof}

The estimates in Lemma~\ref{SFlr.est} give approximate locations for the resonances
in $\calR_{F_{\ell, r_0}}$ arising from the $k$-th Fourier mode.  
The zeros of (\ref{Str.Sw}) correspond to resonances at 
$s = \tfrac12 - k\alpha$.   This requires a cancellation between the two terms on the
right-hand side of (\ref{Str.Sw}).  If $\re \phi >0$, then the second term is larger 
by approximately $e^{2k \phi}$ and cancellation only occurs near the poles of $[S_{F_\ell}(s)]_k$;
this explains the poles of $[S_{F_{\ell, r_0}}(s)]_k$ on the negative real axis.  For $\re \phi = 0$ the two
terms in (\ref{Str.Sw}) have the same magnitiude;  the resonances off the real axis in $\calR_{F_{\ell, r_0}}$
thus occur near the line $\re \phi((\tfrac12 - s)/k; r_0) = 0$ (and its conjugate).   Figure~\ref{resphi7}
illustrates this phenomenon.   For $\re \phi < 0$ the first term in (\ref{Str.Sw}) is always larger than
the second and no zeros occur.
\begin{figure} 
\psfrag{4}{$4$}
\psfrag{-10}{$-10$}
\psfrag{rph}{$\re\phi((\tfrac12-s)/k; r_0) = 0$}
\begin{center}  
\includegraphics{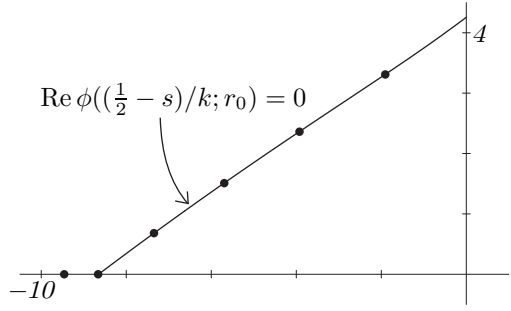} 
\end{center}
\caption{Using the equation $\re \phi = 0$ to locate the resonances of 
$F_{\ell,r_0}$ occurring in the $k=7$ Fourier mode, shown for $\ell = 2\pi$ and $r_0 = 1$.}\label{resphi7}
\end{figure}

Since $[S_{F_{\ell, r}}(\tfrac12+k\alpha)]_k$ may indeed have zeros near the line
$\re \phi = 0$, proving a lower bound is more delicate in this region.  By focusing on
a relatively narrow strip, we can settle for a cruder estimate
on the matrix elements in the vicinity of the zeros.
\begin{lemma}\label{SFlr.mm}
For $k \ge 0$ and $\re s \ge \tfrac12$ and assuming $\dist(1-s, \calR_{F_{\ell}}) \ge |s|^{-\beta}$ with $\beta >2$,
$$
\log \left| \frac{\bigl[S_{F_{\ell, r_0}}(s)\bigr]_k}{\bigl[S_{F_{\ell}}(s)\bigr]_k} \right| 
\le  C(r_0,\beta) (k + |s|) \log |s|.
$$
If $\dist(1-s, \calR_{F_{\ell,r_0}}) \ge |s|^{-\beta}$ with $\beta >2$, then we have
$$
\log \left| \frac{\bigl[S_{F_{\ell, r_0}}(s)\bigr]_k}{\bigl[S_{F_{\ell}}(s)\bigr]_k} \right| 
\ge - c(r_0,\beta) (k + |s|) \log |s|.
$$
\end{lemma}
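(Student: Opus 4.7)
The plan is to use the identity (\ref{Str.w01}) from the proof of Lemma~\ref{SFlr.est}, which expresses $r(s) := [S_{F_{\ell,r_0}}(s)]_k / [S_{F_\ell}(s)]_k$ as $1$ minus a product of three factors: an exponential $e^{-2k(\phi_0+\gamma)}$, a Wronskian-type combination $W(r_0) := w_1(r_0)/w_0(r_0) - w_1(0)/w_0(0)$, and $[S_{F_\ell}(1-s)]_k$, where $\alpha = (s-\tfrac12)/k$. For the upper bound, each of these is estimated separately. The exponential is bounded by $e^{C(r_0)(k+|s|)}$ directly from (\ref{phi0.def}) and (\ref{gamma.def}). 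The factor $W(r_0)$ is controlled using the upper bound (\ref{wsig.upper}) on $|w_1|$ and the lower bound (\ref{w0.lower}) on $|w_0|$ from Proposition~\ref{wsig.prop}. The factor $|[S_{F_\ell}(1-s)]_k|$ is bounded by applying Stirling's formula to the explicit expression (\ref{gk.def}), using the hypothesis $\dist(1-s, \calR_{F_\ell}) \ge |s|^{-\beta}$ to stay away from the zeros of $\beta_k(2-s)$ that constitute the polar locus. Taking logarithms and combining yields the claimed upper bound.

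For the lower bound, the plan is to exploit the unitarity identity $[S_*(s)]_k [S_*(1-s)]_k = 1$, valid for both $* = F_\ell$ and $* = F_{\ell,r_0}$, from which $1/r(s) = r(1-s)$. Thus the lower bound on $|r(s)|$ is equivalent to an upper bound on $|r(1-s)|$, which we obtain by rerunning the argument with $s$ replaced by $1-s$, equivalently $\alpha$ replaced by $-\alpha$. The estimates of Proposition~\ref{wsig.prop} extend to $\arg\alpha \in [-\tfrac\pi2, 0]$ by complex conjugation, and the hypothesis $\dist(1-s, \calR_{F_{\ell,r_0}}) \ge |s|^{-\beta}$ now plays the role of keeping $r(s)$ bounded away from its zeros (equivalently, $r(1-s)$ away from its poles), by the same identification of $\calR_{F_{\ell,r_0}}$ with the zeros of $\tilde f_k(s;r_0)$ that was used implicitly in (\ref{SFlr}).

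The principal difficulty is the regime where $\arg\alpha$ is close to $\pm\tfrac\pi2$, i.e.\ when $s$ lies near the critical line. There the sharp asymptotic (\ref{wsig.asymp}) degenerates, since both $w_0$ and $w_1$ become oscillatory with comparable magnitudes (see (\ref{ai.asym2}) and the argument around (\ref{kzW.low})), and one must settle for the coarse uniform bounds (\ref{wsig.upper})--(\ref{w0.lower}). This loss is precisely what produces the relatively weak form $C(k+|s|)\log|s|$ in the conclusion, as opposed to the sharp linear-in-$k$ estimate of Lemma~\ref{SFlr.est}; the extra factor of $\log|s|$ tracks the Stirling-type losses near the polar loci, weighted by the distance hypothesis $|s|^{-\beta}$. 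The special case $k=0$, where $\alpha = (s-\tfrac12)/k$ is undefined, is handled separately: there $r(s)$ reduces to an explicit ratio of Gamma functions and a hypergeometric function evaluated at $-\sinh^{-2}r_0$, which Stirling's formula and standard hypergeometric bounds control directly.
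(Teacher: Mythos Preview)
Your upper bound strategy is workable in outline and not far from the paper's, but the paper takes a more direct route that also fixes a real gap in your lower bound argument.

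The paper does not use the decomposition (\ref{Str.w01}) at all. Instead it observes that $f_k(s;r_0)/\Gamma(s-\tfrac12)$ is an \emph{entire} function of $s$ and bounds it globally by $C(r_0)(k+|s|)\log|s|$ using Stirling on the $\beta_k$ factors and (\ref{wsig.upper}) on the hypergeometric part (the $s\leftrightarrow 1-s$ symmetry of $w^\pm_k$ lets one reduce the left half-plane to the right). From this global upper bound on an entire function, the Minimum Modulus Theorem immediately yields a matching lower bound away from the zeros, with the exclusion radius $|s|^{-\beta}$ coming directly from that theorem. Both inequalities then follow by writing $[S_{F_{\ell,r_0}}(s)]_k/[S_{F_\ell}(s)]_k$ as the four-fold ratio $\dfrac{f_k(1-s;r_0)}{f_k(s;r_0)}\cdot\dfrac{f_k(s;0)}{f_k(1-s;0)}$.

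Your lower bound via the functional equation $r(1-s)=1/r(s)$ has a genuine gap. You want an upper bound on $|r(1-s)|$, and you propose to rerun the upper-bound argument with $\alpha$ replaced by $-\alpha$. But if $\arg\alpha\in[0,\tfrac\pi2]$ then $\arg(-\alpha)\in[-\pi,-\tfrac\pi2]$, which is \emph{not} covered by Proposition~\ref{wsig.prop}; complex conjugation only gets you $\arg\bar\alpha\in[-\tfrac\pi2,0]$, which is the reflection $s\mapsto\bar s$, not $s\mapsto 1-s$. The Airy-based solutions $w_\sigma$ and the phase $\phi$ have no simple behavior under $\alpha\to-\alpha$ (the turning point and the branch structure change), so the bounds (\ref{wsig.upper}) and (\ref{w0.lower}) are simply unavailable there. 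The paper's Minimum Modulus argument sidesteps this entirely: it needs only the global upper bound on the entire function, and the hypergeometric solutions $w^\pm_k$ (unlike the $w_\sigma$) are invariant under $s\to 1-s$, so that global bound is easy to obtain.
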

\begin{proof}
From (\ref{fk.def}), we note that $f_k(s;r_0)/\Gamma(s-\tfrac12)$
is an entire function of $s$.  By Stirling's formula and the estimate (\ref{wsig.upper}), we can
estimate its growth for large $|s|$ and $k \ne 0$ by
$$
\log \left| \frac{f_k(s;r_0)}{\Gamma(s-\tfrac12)} \right| \le C(r_0)(k + |s|) \log |s|),
$$
where $C$ is independent of $k$.  The same estimate holds for $k=0$, by the classical
asymptotics of the hypergeometric function due to Watson \cite[\S2.3.2]{Erdelyi}.
Assuming that $\dist(s, \calR_{F_{\ell,r_0}}) \ge |s|^{-\beta}$,
where $\beta>2$, the Minimum Modulus Theorem (see e.g.~\cite[Thm.~3.7.4]{Boas})
gives
$$
\log \left| \frac{f_k(s;r_0)}{\Gamma(s-\tfrac12)} \right| \ge -c(r_0,\beta) (k + |s|) \log |s|,
$$
for large $|s|$.  The results follow from applying these estimates to 
$$
\frac{\bigl[S_{F_{\ell, r_0}}(s)\bigr]_k}{\bigl[S_{F_{\ell}}(s)\bigr]_k} =
\frac{f_k(1-s; r_0)}{f_k(s; r_0)} \frac{f_k(s; 0)}{f_k(1-s; 0)}.
$$
\end{proof}

\bigbreak
\begin{proof}[Proof of Theorem~\ref{trfun.thm}]
We note that
$$
N_{F_\ell}(t) \sim \frac{\ell}4 t^2,
$$
and
$$
\ovol(F_{\ell, r_0}) = - \ell \sinh r_0.
$$
By Corollary~\ref{relcount} and Theorem~\ref{tau.int.thm}, the claimed asymptotic will be proven if we can show
that there exists an unbounded set $\Lambda \subset [1, \infty)$ such that
\begin{equation}\label{tau.trf.asym}
\frac{2}{\pi} \int_{0}^{\frac{\pi}2} \log |\tau(\tfrac12+a e^{i\theta})| \>d\theta 
\ge  \frac{4a^2}{\pi} \int_{0}^{\frac{\pi}2} \int_0^\infty \frac{[I(xe^{i\theta},\ell,r_0)]_+}{x^3}\>dx
- \frac{\ell}4 a^2 - o(a^2),
\end{equation}
for all $a \in \Lambda$.  We take
\begin{equation}\label{Lambda.RRN}
\Lambda := \left\{ a \ge 1:\> \dist\Bigl(\bigl\{|s-\tfrac12| = a\bigr\}, \calR_{F_{\ell}}\cup \calR_{F_{\ell, r_0}} 
\cup \bbN_0 \Bigr) \ge a^{-3} \right\}.
\end{equation}
 
Using the symmetry of coefficients under $k \to -k$, and estimating the $k=0$ term by 
Lemma~\ref{SFlr.mm}, we have
\begin{equation}\label{lt.sum}
\log |\tau(\tfrac12+ae^{i\theta})| =  2\sum_{k=1}^\infty \log \left|  
\frac{[S_{F_{\ell, r_0}}(\tfrac12+ae^{i\theta})]_k}{[S_{F_\ell}(\tfrac12+ae^{i\theta})]_k} \right|
+ O(a\log a).
\end{equation}

Define $\varrho(\theta)$ by $\re \phi(\varrho(\theta) e^{i\theta}, r_0) = 0$, as in the proof
of Proposition~\ref{detG.prop}, and assume for now that $\theta \le \tfrac{\pi}2 - \vep$.  
For $\delta>0$, we will split the sum (\ref{lt.sum}) at 
$a/k = \varrho(\theta)(1\pm a^{-1/2})$.  Let $\Sigma_+$ denote the portion of the sum with 
$a/k \ge \varrho(\theta)(1+ a^{-1/2})$.  
Under this condition, we want to derive a lower bound from Lemma~\ref{SFlr.est} 
using the inequality,
$$
\log |1 + \lambda| \ge \log |\lambda| - \log 2, \qquad\text{for }|\lambda| \ge 2.
$$
For $a$ sufficiently large, we will have 
$\re\phi(xe^{i\theta}, r_0) \ge ca^{-1/2}$ for $x \ge \varrho(\theta)(1 + a^{-1/2})$.
Thus, for $k \ge c\sqrt{a}$ we can deduce from Lemma~\ref{SFlr.est} that
$$
\log \left| \frac{[S_{F_{\ell, r_0}}(\tfrac12+ae^{i\theta})]_k}{[S_{F_\ell}(\tfrac12+ae^{i\theta})]_k} \right|
\ge 2k \Bigl( \re \phi \bigl( ae^{i\theta}/k; r_3\bigr) 
- \bigl[ \re \phi_0 \bigl( ae^{i\theta}/k \bigr)\bigr]_+ \Bigr) + O(1).
$$
Arguing as in the proof of Proposition~\ref{detG.prop}, we can then obtain
\[
\begin{split}
& \sum_{c\sqrt{a} \le k \le \frac{a}{\varrho(\theta)(1+a^{-1/2})}} \log 
\left|\frac{[S_{F_{\ell, r_0}}(\tfrac12+ae^{i\theta})]_k}{[S_{F_\ell}(\tfrac12+ae^{i\theta})]_k} \right| \\
&\qquad \ge  2a^2 \int_{\varrho(\theta)(1+a^{-1/2})}^{C\sqrt{a}}  
\frac{\re\phi(x e^{i\theta},r_0) - [\re\phi_0(x e^{i\theta})]_+}{x^3}\>dx   - O(a\log a).
\end{split}
\]
For $k \le c\sqrt{a}$, Lemma~\ref{SFlr.mm} gives the estimate
$$
\sum_{1 \le k \le c\sqrt{a}} \log 
\left|\frac{[S_{F_{\ell, r_0}}(\tfrac12+ae^{i\theta})]_k}{[S_{F_\ell}(\tfrac12+ae^{i\theta})]_k} \right|
\ge - O(a^\frac32 \log a).
$$
On the other hand, since $|\re\phi(\alpha, r)| = O(|\alpha|)$ for large $|\alpha|$, we also have
$$
2a^2 \int_{C\sqrt{a}}^{\infty}  \frac{\re\phi(x e^{i\theta},r_0) - [\re\phi_0(x e^{i\theta})]_+}{x^3}\>dx   =
O(a^\frac32).
$$
We can also estimate
$$
2a^2 \int_{\varrho(\theta)}^{\varrho(\theta)(1+a^{-1/2})} \frac{\re\phi(x e^{i\theta},r_0) 
- [\re\phi_0(x e^{i\theta})]_+}{x^3}\>dx =  O(a^\frac32),
$$
since $\re\phi(\alpha, \ell, r_0)$ is $O(\delta)$ in the range of integration.
In combination, these estimates give
\begin{equation}\label{lt.est1}
\Sigma_+ \ge 
2a^2 \int_{\varrho(\theta)}^{\infty}  \frac{\re\phi(x e^{i\theta},r_0)}{x^3}\>dx - \frac{\pi a^2}{2\omega} \sin^2\theta
- O(a^\frac32 \log a),
\end{equation}
for $a\in \Lambda$.

Let $\Sigma_0$ denote the portion of the sum in (\ref{lt.sum}) for which
$\varrho(\theta)(1-a^{-1/2}) < a/k < \varrho(\theta)(1+a^{-1/2})$.  Since there are $O(a^{1/2})$ values
of $k$ in this range, Lemma~\ref{SFlr.mm} gives the estimate
\begin{equation}\label{lt.est2}
\Sigma_0 \ge - O(a^\frac32 \log a).
\end{equation}

Finally, we have $\Sigma_-$, defined as the portion of (\ref{lt.sum}) with $a/k \le \varrho(\theta)(1-a^{-1/2})$.
Now we wish to apply Lemma~\ref{SFlr.est} using
$$
\log |1 + \lambda| \ge - |\lambda| \log 4, \qquad\text{for }|\lambda| \le \tfrac12.
$$
Note that $I(xe^{i\theta}, \ell, r_0) \le - ca^{-1/2}$ for $x \le \varrho(\theta)(1- a^{-1/2})$ and
$a$ sufficiently large, and that $k \ge ca$ in the range of $\Sigma_-$.  Thus for
large $a$ Lemma~\ref{SFlr.est} yields
$$
\log \left| \frac{[S_{F_{\ell, r_0}}(\tfrac12+ae^{i\theta})]_k}{[S_{F_\ell}(\tfrac12+ae^{i\theta})]_k} \right|
\ge - O(e^{-cka^{-1/2}}),
$$
within the scope of $\Sigma_-$.  We conclude that
\begin{equation}\label{lt.est3}
\Sigma_- \ge - O(e^{-ca^{1/2}}).
\end{equation}

Applying the estimates (\ref{lt.est1}), (\ref{lt.est2}), and (\ref{lt.est3}) to the sum (\ref{lt.sum})
now proves the lower bound
\[
\begin{split}
\frac{2}{\pi} \int_{0}^{\frac{\pi}2-\vep} \log |\tau(\tfrac12+a e^{i\theta})| \>d\theta 
& \ge  \frac{4a^2}{\pi} \int_{0}^{\frac{\pi}2-\vep} \int_0^\infty 
\frac{[2\re\phi(xe^{i\theta}, r_0)]_+}{x^3}\>dx  \\
&\qquad - \frac{2a^2}{\omega} \int_{0}^{\frac{\pi}2-\vep} \sin^2\theta \>d\theta
- o(a^2),
\end{split}
\]
For the missing sectors, we appeal to Lemma~\ref{lind.lemma} to see that
$$
\frac{2}{\pi} \int_{\frac{\pi}2-\vep}^{\frac{\pi}2} \log |\tau(\tfrac12+a e^{i\theta})| \>d\theta 
\ge  -c \vep a^2.
$$
We can thus take $\vep\to 0$ to complete the proof of (\ref{tau.trf.asym}).
\end{proof}

\bigbreak
\emph{Remark.}  In the proof of (\ref{A.hp}) given in \cite[Thm.~1.2]{Borthwick:2009},
the $\Sigma_-$ term was estimated incorrectly.  This term is not necessarily positive,
so the upper bound $O(e^{-ca})$ does not imply a corresponding lower bound.  Instead,
one needs to argue as in the derivation of (\ref{lt.est3}) above.
The estimates needed for the correct argument were given in \cite[eq. (6.8--6.10)]{Borthwick:2009}.

\section{Resonance asymptotics for extended funnels}\label{exfun.sec}

Using the same notation as in \S\ref{trfun.sec}, we now consider $F_{\ell, -r_0}$, defined
as the subset $r \ge -r_0$ in a hyperbolic cylinder of diameter $\ell$, where $r_0 \ge 0$.  
The metric and Laplacian are still given by (\ref{fun.metric}) and (\ref{fun.lapl}), 
so that the scattering matrix elements are easily computed in terms of hypergeometric functions
as before.

With reference to the even/odd solutions $w_k^\pm$ defined in (\ref{wp.def}) and (\ref{wm.def}),
a solution $u_k(s;r)$ to the $k$-th eigenmode equation (\ref{kmode.eq}) satisfying
$u_k(s; -r_0) = 0$ can be written
$$
u_k(s; r) = w^+_k(s;r_0) w^-_k(s; r) + w^-_k(s;r_0) w^+(s;r),
$$
where $w^\pm_k(s; r)$ are the even/odd hypergeometric solutions defined in (\ref{wp.def})
and (\ref{wm.def}).  Using the asymptotic expansions (\ref{wpm.rinf}) as $r \to \infty$, 
we can read off the scattering matrix elements
\begin{equation}\label{S.exfun}
[S_{F_{\ell, -r_0}}(s)]_k =  \frac{\Gamma(\tfrac12-s)}{\Gamma(s-\tfrac12)}
\frac{\beta_k(2-s) w^+_k(s;r_0) + \beta_k(1-s) w^-_k(s;r_0)}{\beta_k(1+s) w^+_k(s;r_0) 
+ \beta_k(s) w^-_k(s;r_0)},
\end{equation}
where $\beta_k(s)$ was defined in (\ref{beta.def}).

This shows in particular that
$$
\calR_{F_{\ell, -r_0}} = \bigcup_{k\in\bbZ} \Bigl\{ s:\>  \beta_k(1+s) w^+_k(s;r_0) 
+ \beta_k(s) w^-_k(s;r_0)  = 0\Bigr\}.
$$

\begin{theorem}\label{exfun.thm}
For the extended funnel with Dirichlet boundary conditions imposed at $r = -r_0$, 
where $r_0 \ge 0$, we have
$$
N_{F_{\ell, -r_0}}(t) \sim A(F_{\ell, -r_0}) t^2,
$$
where
\begin{equation}\label{A.exf}
A(F_{\ell, -r_0}) =  \frac{\ell}{2\pi} \sinh r_0 + \frac{4}{\pi} \int_{0}^{\frac{\pi}2} \int_{0}^\infty 
\frac{[I(xe^{i\theta},\ell, -r_0)]_+}{x^3}\>dx\>d\theta,
\end{equation}
and $I(\alpha, \ell, r)$ was defined in (\ref{Idef}).
\end{theorem}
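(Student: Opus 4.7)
I would mimic the proof of Theorem~\ref{trfun.thm} for the truncated funnel, exploiting a clean algebraic identification. Using the parities $w^+_k(s;-r) = w^+_k(s;r)$ and $w^-_k(s;-r) = -w^-_k(s;r)$, together with the symmetry $w^\pm_k(1-s;r) = w^\pm_k(s;r)$, the numerator and denominator of (\ref{S.exfun}) coincide with $\Gamma(\tfrac12-s)^{-1} f_k(1-s;-r_0)$ and $\Gamma(s-\tfrac12)^{-1} f_k(s;-r_0)$ respectively, so that
$$
[S_{F_{\ell,-r_0}}(s)]_k = \frac{f_k(1-s;-r_0)}{f_k(s;-r_0)},
$$
which is exactly (\ref{SFlr}) evaluated at $-r_0$ in place of $r_0$. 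Applying Corollary~\ref{relcount} with background $\calR_0 = \calR_{F_\ell}$ and Theorem~\ref{scphase.thm} (using $\ovol(F_{\ell,-r_0}) = \ell\sinh r_0$) reduces the theorem to showing that, for some unbounded set $\Lambda \subset [1,\infty)$ avoiding the zero and pole sets,
$$
\frac{2}{\pi} \int_0^{\pi/2} \log |\tau(\tfrac12 + a e^{i\theta})| \, d\theta
\sim  \frac{4 a^2}{\pi} \int_0^{\pi/2}\!\!\int_0^\infty \frac{[I(xe^{i\theta},\ell,-r_0)]_+}{x^3}\,dx\,d\theta - \frac{\ell}{4} a^2,
$$
where $\tau(s) = \det S_{F_{\ell,-r_0}}(s)\, S_{F_\ell}(s)^{-1}$.

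Next I would extend Lemmas~\ref{SFlr.est} and~\ref{SFlr.mm} to negative boundary location. The analog of (\ref{Str.Sw}) is
$$
\frac{[S_{F_{\ell,-r_0}}(\tfrac12+k\alpha)]_k}{[S_{F_\ell}(\tfrac12+k\alpha)]_k} - 1 =  - e^{-2k(\phi_0+\gamma)}\left( \frac{w_1(-r_0)}{w_0(-r_0)} - \frac{w_1(0)}{w_0(0)} \right) \bigl[S_{F_\ell}(\tfrac12-k\alpha)\bigr]_k.
$$
The uniform WKB bounds of Proposition~\ref{wsig.prop} hold at any real $r$, so $w_\sigma(-r_0)$ can be estimated using the identity $\phi(\alpha,-r_0) = -\phi(\alpha,r_0) + 2\phi_0(\alpha)$, which follows from evenness of $\sqrt{f}$ in $r$. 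This yields the analog of Lemma~\ref{SFlr.est} with $r_0$ replaced by $-r_0$. The minimum-modulus estimate of Lemma~\ref{SFlr.mm} depends only on the growth and minimum-modulus properties of $f_k(\cdot;r)/\Gamma(\cdot-\tfrac12)$ and so extends directly. With these extended lemmas, the sum-estimation argument of Theorem~\ref{trfun.thm} transfers verbatim: define $\varrho(\theta)$ implicitly by $\re\phi(\varrho(\theta)e^{i\theta},-r_0)=0$, split $\log|\tau| = 2\sum_{k\ge 1}\log|[S_{F_{\ell,-r_0}}]_k/[S_{F_\ell}]_k| + O(a\log a)$ at $a/k = \varrho(\theta)(1\pm a^{-1/2})$ into pieces $\Sigma_+$, $\Sigma_0$, $\Sigma_-$, and combine using the extended lemmas together with $\log|1+\lambda| \ge -|\lambda|\log 4$ for $|\lambda| \le \tfrac12$. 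A matching upper bound obtained from the upper estimates in the extended Lemma~\ref{SFlr.est} completes the two-sided asymptotic.

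\textbf{Main obstacle.} The estimate (\ref{w01.est}) in Proposition~\ref{wsig.prop} relied on $\re(\phi - \phi_0) > c(\vep,r) > 0$ for $r > 0$ to conclude that $w_1(r)/w_0(r)$ dominates the constant $w_1(0)/w_0(0)$. For $r = -r_0 < 0$ this inequality reverses, and depending on whether $\re\phi(\alpha,-r_0) = 2\re\phi_0(\alpha) - \re\phi(\alpha,r_0)$ is larger or smaller than $\re\phi_0(\alpha)$, the dominant term in $w_1(-r_0)/w_0(-r_0) - w_1(0)/w_0(0)$ is either $w_1(-r_0)/w_0(-r_0)$ or the constant $w_1(0)/w_0(0)$. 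Both sub-regions have to be treated separately; fortunately, in the regime where the constant term dominates, the ratio $[S_{F_{\ell,-r_0}}]_k/[S_{F_\ell}]_k$ stays bounded away from $0$ and $\infty$, contributing only to the $O(a\log a)$ error and so preserving the sharpness of the asymptotic. Additionally, the correction to the $\Sigma_-$ estimate noted at the end of \S\ref{trfun.sec}, where an $O(e^{-ca})$ upper bound does not automatically produce a matching lower bound, must be implemented analogously in the extended-funnel sum estimate.
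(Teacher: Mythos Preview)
Your identification $[S_{F_{\ell,-r_0}}(s)]_k = f_k(1-s;-r_0)/f_k(s;-r_0)$ is correct and is equivalent to the paper's introduction of $g_k(s;r_0) := \Gamma(s-\tfrac12)[\beta_k(1+s)w^+_k(s;r_0) + \beta_k(s)w^-_k(s;r_0)]$, since indeed $f_k(s;-r_0) = g_k(s;r_0)$ by the parity of $w^\pm_k$. You have also correctly located the main obstacle: for $r<0$ the bracket $w_1(r)/w_0(r) - w_1(0)/w_0(0)$ in (\ref{Str.w01}) is no longer dominated by the first term.

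However, your resolution of that obstacle contains a genuine gap. You assert that in the sub-region where the constant $w_1(0)/w_0(0)$ dominates, the ratio $[S_{F_{\ell,-r_0}}]_k/[S_{F_\ell}]_k$ stays bounded away from $0$ and $\infty$ and contributes only $O(a\log a)$. This is false. In that regime the right-hand side of (\ref{Str.w01}) has modulus of order one, so (ratio $-1$) is $O(1)$; but this does \emph{not} preclude the ratio itself from tending to zero. In fact there is cancellation: the paper expands $g_k(\tfrac12-k\alpha;r_0)$ directly in $w_0(r_0),w_1(r_0)$ and finds that on the strip $\varrho_1(\theta)<a/k<\varrho_2(\theta)$ (where $0<\re\phi_0<\re(\phi-\phi_0)$, precisely your ``constant dominates'' region) the dominant balance gives
\[
\log\Bigl|\,[S_{F_{\ell,-r_0}}(\tfrac12+k\alpha)]_k \big/ [S_{F_\ell}(\tfrac12+k\alpha)]_k\,\Bigr|
= -2k\,\re\phi_0(\alpha) + O(\log|k\alpha|).
\]
Summed over this range of $k$ this is a genuine $a^2$-order contribution, not $O(a\log a)$. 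Your two-region split at $\varrho(\theta)=\varrho_2(\theta)$ therefore loses a leading-order piece; one needs the paper's three-region decomposition (at both $\varrho_1$ and $\varrho_2$), with a separate asymptotic formula in the middle strip.

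A secondary issue: your appeal to Proposition~\ref{wsig.prop} ``at any real $r$'' to evaluate $w_\sigma(-r_0)$ is not immediate, since the Airy asymptotics (\ref{wsig.asymp}) were established through the Liouville variable $\zeta$ on $r\ge 0$. The paper avoids this entirely by expressing $g_k(\tfrac12\pm k\alpha;r_0)$ in the basis $w_0(r_0),w_1(r_0)$ at the \emph{positive} argument $r_0$, determining the coefficients $B^\pm_\sigma$ from the known $r\to\infty$ asymptotics of $g_k$; this is what makes the middle-region cancellation visible.
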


\begin{proof}
Since $N_{F_\ell}(t) \sim \tfrac{\ell}4 t^2$ and $\ovol(F_{\ell, -r_0}) = \ell \sin r_0$,
Theorem~\ref{exfun.thm} will follow from Corollary~\ref{relcount} and Theorem~\ref{scphase.thm},
once we establish 
\begin{equation}\label{extau.asym}
\frac{2}{\pi} \int_{0}^{\frac{\pi}2} \log |\tau(\tfrac12 + a e^{i\theta})|\>d\theta = 
\frac{4a^2}{\pi} \int_{0}^{\frac{\pi}2} \int_0^\infty 
\frac{[I(xe^{i\theta},\ell, -r_0)]_+}{x^3}\>dx\>d\theta
- \frac{\ell}4 a^2 - o(a^2),
\end{equation}
where $\Lambda$ is defined again by (\ref{Lambda.RRN}).

As in the proof of Theorem~\ref{trfun.thm}, we start with the Fourier decomposition of the scattering
matrices and use Lemma~\ref{SFlr.mm} to estimate the $k=0$ term, leaving
\begin{equation}\label{exlt.sum}
\log |\tau(\tfrac12+ae^{i\theta})| =  2\sum_{k=1}^\infty \log \left|  
\frac{[S_{F_{\ell, -r_0}}(\tfrac12+ae^{i\theta})]_k}{[S_{F_\ell}(\tfrac12+ae^{i\theta})]_k} \right|
+ O(a\log a).
\end{equation}
If we define
$$
g_k(s; r) := \Gamma(s-\tfrac12) \Bigl[\beta_k(1+s) w^+_k(s;r) + \beta_k(s) w^-_k(s;r)\Bigr],
$$
then by (\ref{S.exfun}), 
$$
[S_{F_{\ell, -r_0}}(\tfrac12+ae^{i\theta})]_k  = \frac{g_k(\tfrac12-ae^{i\theta})}{g_k(\tfrac12+ae^{i\theta})}
$$
Assuming $k > 0$, we set $k\alpha = ae^{i\theta}$.
Since $g_k(s; \cdot)$ solves (\ref{kmode.eq}), for $\re\alpha \ge 0$ we
can write
$$
g_k(\tfrac12\pm k\alpha; r) = B_0^\pm w_0(r) + B_1^\pm w_1(r),
$$
where $w_\sigma$ are the solutions given in (\ref{wsig.def}). 

As $r \to \infty$, the coefficient of $\rho^{1-s}$ in the expansion of $g_k(\tfrac12+k\alpha; r)$ is
\begin{equation}\label{gkp.asym}
2 \Gamma(k\alpha)^2 \beta_k(\tfrac12+k\alpha) \beta_k(\tfrac32+k\alpha) 
=  \frac{1}{\pi k\alpha} \left(1 - \frac{\cosh \pi k \omega}{\sin \pi k \alpha}\right) 
\bigl[S_{F_{\ell}}(\tfrac12-k\alpha)\bigr]_k.
\end{equation}
The coefficient of $\rho^{1-s}$ in $g_k(\tfrac12-k\alpha; r)$ is
\begin{equation}\label{gkm.asym}
\begin{split}
&\Gamma(k\alpha) \Gamma(-k\alpha) \left[\beta_k(\tfrac12+k\alpha) 
\beta_k(\tfrac32-k\alpha) + \beta_k(\tfrac12-k\alpha) \beta_k(\tfrac32+k\alpha) \right] \\
&\qquad =  - \frac{1}{\pi k\alpha} \frac{\cosh \pi k \omega}{\sin \pi k \alpha}.
\end{split}
\end{equation}
Comparing these to the asymptotics for $w_\sigma$, as given in Lemma~\ref{wsig.rlim},
we see that
\begin{equation}\label{B1p}
B_1^+ =  \frac{e^{-k(\phi_0+ \gamma)}}{\pi k\sqrt{\alpha}} \left(1 - \frac{\cosh \pi k \omega}{\sin \pi k \alpha}\right) 
 \bigl[S_{F_{\ell}}(\tfrac12-k\alpha)\bigr]_k,
\end{equation}
and
\begin{equation}\label{B1m}
B_1^-  =  - \frac{e^{-k(\phi_0+ \gamma)}}{\pi k\sqrt{\alpha}} \frac{\cosh \pi k \omega}{\sin \pi k \alpha} 
\end{equation}
We then find the $B_0$ coefficients by evaluating at $r=0$,
\begin{equation}\label{B0.def}
B^\pm_0 =  \frac{1}{w_0(0)} \left[ g_k(\tfrac12\pm k\alpha; 0) - B^\pm_1 w_1(0)\right].
\end{equation}
Since $f_k$ and $g_k$ agree at $r=0$, (\ref{fk.w0}) shows that
$$
w_0(0) = A^+_0 g_k(\tfrac12 + k\alpha; 0),
$$
where
$$
A^+_0 :=   \frac{1}{\pi k\sqrt{\alpha}} e^{k(\phi_0 + \gamma)}.
$$

Combining these formulas gives
\begin{equation}\label{gkp.ww}
g_k(\tfrac12 + k\alpha; r) = A^+_0 w_0(r) + B^+_1 \left( w_1(r) - \frac{w_1(0)}{w_0(0)} w_0(r) \right),
\end{equation}
and
\begin{equation}\label{gkm.ww}
g_k(\tfrac12 - k\alpha; r) = \bigl[S_{F_{\ell}}(\tfrac12+k\alpha)\bigr]_k A^+_0 w_0(r)
+  B^-_1 \left( w_1(r) - \frac{w_1(0)}{w_0(0)} w_0(r) \right).
\end{equation}

The asymptotic analysis of (\ref{gkp.ww}) is straightforward.  The $B^+_1 w_1(r)$ term always
dominates for $|k\alpha|$ large and $\arg \alpha \in [0, \tfrac{\pi}2-\vep]$, by Proposition~\ref{wsig.prop}.
By applying Stirling's formula to (\ref{gkp.asym}) we find that
\begin{equation}\label{gkp.est}
g_k(\tfrac12 + k\alpha; r) = \frac{1}{\pi k\sqrt{\alpha}} (\omega^2 + \alpha^2 \cosh^2 r)^{-\frac14}
e^{k(\phi - \phi_0 + \gamma)}(1 + O(|k\alpha|^{-1}).
\end{equation}
The analysis of (\ref{gkm.ww}) more complicated.  This term has both zeros and poles,
and different terms can dominate for $\alpha$ in different regions.  For $\alpha = xe^{i\theta}$,
the borders between these regions will be denoted $x = \varrho_j(\theta)$, $j=1,2$, 
where 
$$
\re\phi_0(\varrho_1(\theta) e^{i\theta}) = 0,\qquad \re \Bigl[\phi(\varrho_2(\theta) e^{i\theta};r) 
- 2 \phi_0(\varrho_2(\theta) e^{i\theta};r)\Bigr] =0.
$$
For the first curve we can be explicit, $\varrho_1(\theta) = \omega \csc \theta$.
\begin{figure} 
\psfrag{iw}{$i\omega$}
\psfrag{iwcosh2r}{$\frac{i\omega}{\cosh^2 r}$}
\psfrag{rp0}{$\re\phi_0 < 0$}
\psfrag{rp1}{$0< \re\phi_0 < \re(\phi-\phi_0)$}
\psfrag{rp2}{$\re\phi_0 > \re(\phi-\phi_0)$}
\psfrag{arho1}{$\alpha = \varrho_1(\theta) e^{i\theta}$}
\psfrag{arho2}{$\alpha = \varrho_2(\theta) e^{i\theta}$}
\begin{center}  
\includegraphics{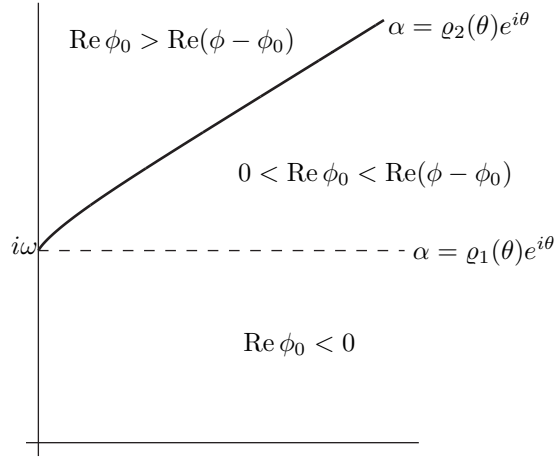} 
\end{center}
\caption{Positive and negative regions for $\re (\phi(\alpha; r) - \phi_0(\alpha))$, shown
for $r = 1$.}\label{phicontour2}
\end{figure}

Consider first the portion of the sum (\ref{exlt.sum}) with $a/k \ge \varrho_2(\theta)$.
In this region, $\re \phi_0 > \re(\phi - \phi_0)$ and the first term
in (\ref{gkm.ww}) dominates the asymptotics.  In this case, provided $|k\alpha| \in\Lambda$,
$$
\log|g_k(\tfrac12 - k\alpha; r)| = k\re(-\phi + \phi_0 - \gamma) + O(\log |k\alpha|).
$$
For $k \le a/\varrho_2(\theta)$, we thus have
$$
 \log \left| \frac{[S_{F_{\ell, -r_0}}(\tfrac12+ae^{i\theta})]_k}{[S_{F_\ell}(\tfrac12+ae^{i\theta})]_k} \right|
= -2k \re \left[ \phi\Bigl(\frac{ae^{i\theta}}{k}; r_0\Bigr) - \phi_0\Bigl(\frac{ae^{i\theta}}{k}\Bigr) \right]
+ O(\log a).
$$
This gives the estimate 
\begin{equation}\label{exsum2}
\begin{split}
& \sum_{1 \le k \le a/\varrho_2(\theta)} \log \left|  
\frac{[S_{F_{\ell, -r_0}}(\tfrac12+ae^{i\theta})]_k}{[S_{F_\ell}(\tfrac12+ae^{i\theta})]_k} \right| \\
&\qquad =   a^2 \int_{\varrho_2(\theta)}^{\infty} \frac{2\re [\phi_0(xe^{i\theta})
-\phi(xe^{i\theta}; r_0)]}{x^3}\>dx
+ O(a\log a).
\end{split}
\end{equation}

The region $\varrho_1(\theta) < a/k < \varrho_2(\theta)$ corresponds to $0 < \re \phi_0 < 
\re (\phi - \phi_0)$.  In this case, 
the $B^-_1w_1(r)$ term dominates the asymptotics of (\ref{gkm.ww}), and we have  
$$
\log |g_k(\tfrac12 - k\alpha; r)| = k\re(\phi - 3\phi_0 - \gamma) + O(\log |k\alpha|).
$$
Using this along with (\ref{gkp.est}) gives 
$$
 \log \left| \frac{[S_{F_{\ell, -r_0}}(\tfrac12+ae^{i\theta})]_k}{[S_{F_\ell}(\tfrac12+ae^{i\theta})]_k} \right|
= -2k \re \phi_0\bigl(ae^{i\theta}/k\bigr) + O(\log a),
$$
for $k \le a/\varrho_2(\theta)$.  We conclude that
\begin{equation}\label{exsum1}
\sum_{a/\varrho_2(\theta) \le k \le a/\varrho_1(\theta)} \log \left|  
\frac{[S_{F_{\ell, -r_0}}(\tfrac12+ae^{i\theta})]_k}{[S_{F_\ell}(\tfrac12+ae^{i\theta})]_k} \right|
= -  a^2 \int_{\varrho_2(\theta)}^{\infty} \frac{2\re \phi_0(xe^{i\theta})}{x^3}\>dx
+ O(a\log a).
\end{equation}

The terms with $\re \phi_0 \le 0$ make only lower order contributions.  First of all,
we can prove a general estimate,
$$
\log \left| \frac{[S_{F_{\ell, -r_0}}(s)]_k}{[S_{F_\ell}(s)]_k} \right| = O((k+|s|) \log |s|),
$$
just as in Lemma~\ref{SFlr.mm}, to show that
\begin{equation}\label{exsum3}
\sum_{\varrho_1(\theta)(1-a^{-1/2}) \le a/k \le \varrho_1(\theta)} \log \left|  
\frac{[S_{F_{\ell, -r_0}}(\tfrac12+ae^{i\theta})]_k}{[S_{F_\ell}(\tfrac12+ae^{i\theta})]_k} \right| 
= O(a^{3/2} \log a).
\end{equation}
For the remaining terms, we use (\ref{gkp.ww}) and (\ref{gkm.ww}) to write
$$
\frac{[S_{F_{\ell, -r}}(\tfrac12+k\alpha)]_k}{[S_{F_{\ell}}(\tfrac12+k\alpha)]_k}
= 1 + \frac{e^{-k(\phi_0+ \gamma)}}{\pi k \sqrt{\alpha}} 
\frac{[S_{F_{\ell}}(\tfrac12-k\alpha)]_k}{g_k(\tfrac12 + k\alpha; r)}
\left( w_1(r) - \frac{w_1(0)}{w_0(0)} w_0(r) \right).
$$
This gives the estimate
$$
\log \left| \frac{[S_{F_{\ell, -r}}(\tfrac12+k\alpha)]_k}{[S_{F_{\ell}}(\tfrac12+k\alpha)]_k}
- 1\right|  \le 2k\re \phi_0(\alpha) + O(\log |k\alpha|).
$$
For $a$ sufficiently large, this gives 
\begin{equation}\label{exsum4}
\sum_{a/k \le \varrho_1(\theta)(1-a^{-1/2})} \log \left|  
\frac{[S_{F_{\ell, -r_0}}(\tfrac12+ae^{i\theta})]_k}{[S_{F_\ell}(\tfrac12+ae^{i\theta})]_k} \right| 
= O(e^{-c\sqrt{a}}).
\end{equation}

The estimates (\ref{exsum1})--(\ref{exsum4}) cover all terms in the sum (\ref{exlt.sum}),
and together yield
\[
\begin{split}
\log |\tau(\tfrac12+ae^{i\theta})| & = 2a^2 \int_{\varrho_2(\theta)}^{\infty} 
\frac{2\re [ 2\phi_0(xe^{i\theta}) - \phi(xe^{i\theta}; r_0)]}{x^3}\>dx \\
&\qquad - \frac{\pi a^2}{\omega} \sin^2 \theta + O(a\log a),
\end{split}
\]
for $a \in \Lambda$ and $0 \le \theta\le  \tfrac{\pi}2 - \vep$.

We now integrate over $\theta \in [0, \tfrac{\pi}2 - \vep]$ and use 
Lemma~\ref{lind.lemma} to control the limit $\vep \to 0$, as in the proof of Theorem~\ref{trfun.thm}. 
This yields
\[
\begin{split}
\frac{2}{\pi} \int_{0}^{\frac{\pi}2} \log |\tau(\tfrac12 + a e^{i\theta})|\>d\theta & = 
\frac{4a^2}{\pi} \int_{0}^{\frac{\pi}2} \int_{\varrho_2(\theta)}^\infty 
\frac{2\re [ 2\phi_0(xe^{i\theta}) - \phi(xe^{i\theta}; r_0)]}{x^3}\>dx\>d\theta \\
&\qquad - \frac{\ell}4 a^2 - o(a^2).
\end{split}
\]
To complete the proof of (\ref{extau.asym}), recall the definition of $\phi(\alpha; r)$
as the integral of $\sqrt{f}\>dr$ in (\ref{zetadiff}).  Since the function $f$ occurring there
is an even function of $r$, $\phi - \phi_0$ will be an odd function of $r$.  (This is not
readily apparent from the definition (\ref{phi.def}).)  This parity implies that
$$
I(\alpha, \ell, -r_0) = 2\re [ 2\phi_0(\alpha) - \phi(\alpha; r_0)]
$$
\end{proof}

\end{document}